\theoremstyle{plain}\newtheorem{construction}[thm]{Construction}
\newcommand{\CsSet}{\mathbf{sSet}}
\newcommand{\CcSet}{\mathbf{cSet}}
\newcommand{\CcCat}{\mathbf{cCat}}
\newcommand{\wcom}{\mathsf{wcom}}
\newcommand{\wcoe}{\mathsf{wcoe}}
\newcommand{\wcoh}{\mathsf{wcoh}}
\newcommand{\ext}{\mathsf{ext}}
\newcommand{\Cof}{\mathrm{Cof}}
\newcommand{\fib}{\mathsf{fib}}
\newcommand{\coffib}{{\Cof\text{-}\mathsf{fib}}}
\newcommand{\HasWCom}{\mathsf{HasWCom}}
\newcommand{\HasWCoe}{\mathsf{HasWCoe}}
\newcommand{\HasExt}{\mathsf{HasExt}}
\newcommand{\Glue}{\mathsf{Glue}}
\newcommand{\glue}{\mathsf{glue}}
\newcommand{\Iso}{\mathsf{Iso}}
\newcommand{\EqHom}{\mathsf{EqHom}}
\newcommand{\El}{\mathsf{El}}
\newcommand{\Th}{\CT}
\newcommand{\ua}{\mathsf{ua}}
\newcommand{\idToPath}{{\mathsf{id}\text{-}\mathsf{to}\text{-}\mathsf{path}}}
\newcommand{\CScone}{\mathbf{Scone}}
\newcommand{\Lift}{\mathsf{Lift}}
\newcommand{\lift}{\mathsf{lift}}
\newcommand{\funr}{\overrightarrow{\mathsf{fun}}}
\newcommand{\funl}{\overleftarrow{\mathsf{fun}}}
\newcommand{\isRefl}{\mathsf{isRefl}}
\newcommand{\Path}{\mathsf{Path}}
\newcommand{\Loop}{\mathsf{Loop}}
\newcommand{\ReflLoop}{\mathsf{ReflLoop}}
\newcommand{\RelEquiv}{\mathsf{RelEquiv}}
\newcommand{\Colon}{\mathrel{::}}
\begin{document}

\title{Strict Rezk completions of models of HoTT and homotopy canonicity}

\author[R.~Bocquet]{Rafaël Bocquet}
\email{bocquet@inf.elte.hu}

\begin{abstract}
  We give a new constructive proof of homotopy canonicity for homotopy type theory (HoTT).
  Canonicity proofs typically involve gluing constructions over the syntax of type theory.
  We instead use a gluing construction over a ``strict Rezk completion'' of the syntax of HoTT.
  The strict Rezk completion is specified and constructed in the topos of cartesian cubical sets.
  It completes a model of HoTT to an equivalent model satisfying a completeness condition, providing an equivalence between terms of identity types and cubical paths between terms.
  This generalizes the ordinary Rezk completion of a $1$-category.
\end{abstract}

\maketitle

\section{Introduction}

Voevodsky conjectured~\parencite{UnivalentFoundationsProject}
that the extension of Martin-Löf Type Theory (MLTT) with his univalence axiom remains constructive.
More precisely, homotopy canonicity for Homotopy Type Theory (HoTT) is the statement that
any closed term of the type of natural numbers in the syntax of HoTT is identifiable with a numeral,
where the identification is witnessed by some closed term of the identity type.

Strict canonicity for Martin-Löf Type Theory can be proven by a model construction known as categorical gluing.
It involves gluing together the syntax of MLTT with the category of sets.
The gluing is specified by the global sections functor, which assigns to every syntactic context its set of closing substitutions (gluing along the global sections functor is also called sconing).
For proofs of homotopical properties of the syntax, such as homotopy canonicity for HoTT, the set-valued global sections functor should be replaced by a homotopical global sections functor, valued in $\infty$-groupoids (or spaces).
However, coherence issues arise, as the syntax has a strict underlying $1$-category $\CS$, while $\infty$-groupoids form an $\infty$-category (perhaps presented by some $1$-category, such as simplicial sets with the Kan-Quillen model structure).

For any syntactic context $\Gamma \in \CS$, one wishes to define an $\infty$-groupoid of closing substitutions into $\Gamma$.
Its set of objects should be the set $\CS(1_\CS,\Gamma)$ of closing substitutions, but the higher cells should be given by iterated identity types $\CS(1_\CS,\Id_\Gamma(-,-))$, \etc
Unfortunately, defining these spaces in a way that is strictly functorial in $\Gamma$, \eg a functor $\CS \to \CsSet$, does not seem possible as a direct construction.

\Textcite{SattlerHomotopyCanonicityHoTT} obtained a proof of homotopy canonicity for HoTT, although the details of the proof have not been made public yet.
Their strategy is to present the homotopical global sections functor using a span
\[ \CS \la \mathsf{Fr}(\CS) \to \CsSet. \]
Here $\mathsf{Fr}(\CS)$ is the frame model over $\CS$, the homotopical inverse diagram model indexed by the semi-simplex category $\Delta_+$.
The frame model extends the syntax with more data, and this additional data allows for the definition of a strict functor $\mathsf{Fr}(\CS) \to \CsSet$.
The map $\mathsf{Fr}(\CS) \to \CS$ is a weak equivalence of models, ensuring that the span morally corresponds to a functor $\CS \to \CsSet$.
Using simplicial sets leads to a non-constructive proof of homotopy canonicity, but a constructive proof can be achieved by gluing along some more complex functor $\mathsf{Fr}(\CS) \to \CcSet_{\mathsf{dM}}$ into (De Morgan) cubical sets.

In this paper, we propose another way to solve the issue of the definition of a homotopical global sections functor.
We work internally to the topos $\CcSet$ of cartesian cubical sets.
In the internal language of this topos, we have a notion of fibrant set; the fibrant sets can be seen as $\infty$-groupoids.
This topos has been equipped with the structure of a model of HoTT with universes classifying the fibrant sets by~\textcite{SyntaxModelsCartCTT}.
There is an internal copy of the syntax $\CS$ of HoTT;
its components (sets of contexts, substitutions, \etc) are fibrant but have the ``wrong'' homotopy types
(they are $0$-truncated and ignore the ambient cubical setting).
We will define another internal model $\overline{\CS}$ with the following properties:
\begin{itemize}
\item The components of $\overline{\CS}$ are fibrant; \ie they can be seen as $\infty$-groupoids.
\item There is a morphism $i : \CS \to \overline{\CS}$ of models of HoTT.
  Furthermore, after externalization (restriction to the empty cubical context), $i$ becomes a weak equivalence of models of HoTT.
\item The model $\overline{\CS}$ is \defemph{complete}, meaning that its components have the correct homotopy types;
  in particular we have equivalences
  \[ (x \sim y) \simeq \overline{\CS}.\Tm(\Gamma,\Id_A(x,y)), \]
  where $(x \sim y)$ is the set of paths between $x$ and $y$ in $\overline{\CS}.\Tm(\Gamma,A)$.
  More precisely, the (fibrant) set
  \[ (y : \overline{\CS}.\Tm(\Gamma,A)) \times \overline{\CS}.\Tm(\Gamma,\Id_A(x,y)) \]
  should be contractible for any term $x$.
\end{itemize}

Once this model $\overline{\CS}$ is constructed,
we have, still internally to cartesian cubical sets,
a well-behaved homotopical global sections functor,
sending a syntactic context $\Gamma$ to the fibrant set $\overline{\CS}(1,i(\Gamma))$.
Homotopy canonicity for $\CS$ then follows from a standard gluing argument.
In the strict canonicity proof for MLTT, a closed type $A$ is sent to a unary logical predicate
\[ \sem{A} : \CS.\Tm(1,A) \to \SSet. \]
For the homotopy canonicity proof, we instead interpret a closed type $A$ as a logical predicate
\[ \sem{A} : \overline{\CS}.\Tm(1,i(A)) \to \SSet^\fib \]
valued in fibrant sets.

We call $\overline{\CS}$ the \textbf{strict Rezk completion} of $\CS$.
Indeed, its specification is closely related to the specification of Rezk completions of categories~\parencite{UnivalentCategories}.
If $\CC$ is a category in HoTT (meaning that the categorical laws hold up to identification), then its Rezk completion is a category $\overline{\CC}$ satisfying the following properties:
\begin{itemize}
\item There is a weak equivalence $F : \CC \to \overline{\CC}$ (a functor that is essentially surjective and fully faithful).
\item The category $\overline{\CC}$ is \defemph{complete} (or univalent):
  objects of $\overline{\CC}$ have the correct homotopy types; in particular we have equivalences
  \[ (x \sim y) \simeq \Iso_{\overline{\CC}}(x,y) \]
  between identifications in $\Ob_{\overline{\CC}}$ and isomorphisms.
  This is alternatively expressed by asking for the contractibility of $(y : \Ob_{\overline{\CC}}) \times \Iso_{\overline{\CC}}(x,y)$.
\end{itemize}

Strict Rezk completion differs from the ordinary Rezk completion for categories.
The ordinary Rezk completion can be specified (and constructed) fully in HoTT; the categorical and functorial laws are then expressed using identifications in HoTT.
The strict Rezk completion cannot be specified in HoTT: it needs a notion of strict equality (available in cubical sets, and more generally in models of two-level type theory~\parencite{2LTT}).
This is crucial, because it seems that the notion of model of HoTT cannot be expressed without either strict equalities,
or an infinite tower of additional coherence data.
\Textcite{KrausInftyCwfs} explains that internally to HoTT, neither set-truncated nor wild models are well-behaved.

Strict Rezk completions can be specified not only for models of HoTT, but also for the categories of algebras of generalized algebraic theories with a suitable homotopy theory.
In this paper, we consider the case of $1$-categories with the canonical model structure, and models of HoTT with an algebraic variant of the left semi-model structure introduced by~\textcite{HomotopyTheoryTTs}.
We leave generalization to other generalized algebraic theories to future work;
we expect that our construction of the strict Rezk completion should work for
generalized algebraic theories whose category of models is equipped with a
``pseudo-cylindrical left semi-model structure''.

The main idea behind the construction of the strict Rezk completion is to reformulate the notion of completeness in a way that interacts well with the cubical structure.
Completeness is defined using contractibility conditions, and in cubical presheaf models, the notion of contractibility can be expressed in two ways:
\begin{enumerate}
\item\label{itm:contr_1}
  Using the usual definition from HoTT:
  \[ \isContr(X) \triangleq (x:X) \times (\forall y \to x \sim y). \]
\item\label{itm:contr_2}
  Using the cubical structure:
  contractibility corresponds to trivial fibrancy;
  a set is trivially fibrant if any partial element can be extended to a total element.
\end{enumerate}

For fibrant sets,
both definitions are logically equivalent.
For sets that are not yet known to be fibrant,
(\ref{itm:contr_2}) behaves better.

Our definition of the strict Rezk completion relies on the notion of trivial fibrancy: the strict Rezk completion $\overline{\CC}$ of a category $\CC$ is defined as the free extension of $\CC$ by extension structures for the sets $(y : \Ob_{\overline{\CC}}) \times \Iso_{\overline{\CC}}(x,y)$.
We then have to prove that this category has fibrant components and that the externalization of the functor $i : \CC \to \overline{\CC}$ is a weak equivalence.
This generalizes a construction by~\textcite{FoundationSyntheticAlgGeo} of the propositional truncation without homogeneous fibrant replacement in cubical sets.
In fact, their construction of the propositional truncation can be seen as the simplest example of a strict Rezk completion (for the category of sets, equipped with a homotopy theory presenting the propositions).

\subsection*{Related work}

This work builds upon the axiomatic development the semantics of cubical type theories~\parencite{CCHM17} in the internal language of toposes~\parencite{AxiomsCubicalTTInTopos,InternalUniversesModelsHOTT,SyntaxModelsCartCTT,UnifyingCubicalModels}.
The definition of the strict Rezk completion using trivial fibrancy is related to $\Glue$-types and to the equivalence extension property.
The proof of fibrancy of the components of the strict Rezk completion is similar to the proof of fibrancy of the universes in the cubical presheaf models.

We also rely on the left-semi model structure on categories of models of type theories from~\cite{HomotopyTheoryTTs}, and on homotopical inverse diagram models~\parencite{HomotopicalInverseDiagrams}.
This left semi-model structure presents an $\infty$-category of models.
The $\infty$-type theories of~\textcite{InftyTypeTheories} have $\infty$-categories of space-valued models, but relating these space-valued models to the set-valued models of a $1$-type theory is not easy.
The strict Rezk completion is a way to relate set-valued models and space-valued models of type theory, without using $\infty$-categorical tools.

Some canonicity and normalization results have previously been obtained for type theories with univalent universes.
A homotopy canonicity result for a $1$-truncated type theory with a univalent universe of sets has been obtained by~\textcite{UnivalenceInverseDiagramsHomotopyCanonicity}.
Strict canonicity for cubical type theory was first proven by~\textcite{HuberCanonicity}.
\Textcite{CanonicityCubical} have used gluing constructions to prove homotopy and strict canonicity for cubical type theory.
Normalization for cubical type theory has been proven by~\textcite{NormalizationCubical}.
For cubical type theory, taking a strict Rezk completion of the syntax is not needed, because the cubical structure of the syntax automatically endows it with the correct higher dimensional structure.

\subsection*{Outline}

We begin in~\autoref{sec:background} by reviewing the axiomatization of~\textcite{SyntaxModelsCartCTT} of the cartesian cubical set model in the internal language of a topos.
We use the notion of weak composition structure due to~\textcite{UnifyingCubicalModels}.
In~\autoref{sec:completion_categories} we specify and construct the strict Rezk completion for $1$-categories.
The goal to show the main ideas of this work in a relatively simple setting; the construction of the strict Rezk completion for models of HoTT will follow the same structure.

In~\autoref{sec:hott} we detail the semantics of our variant of HoTT, which has a cumulative hierarchy of univalent universes, $\Sigma$-types, $\Pi$-types, booleans, unit-types, empty-types and $W$-types.
We also define part of the homotopy theory of models of HoTT, following~\textcite{HomotopyTheoryTTs}.
In particular, path models and reflexive-loop models, which are instances of homotopical inverse diagram models~\parencite{HomotopicalInverseDiagrams}, play an important role.
Then in~\autoref{sec:completion_hott} we specify and construct the strict Rezk completion for models of HoTT.

Finally, in~\autoref{sec:homotopy_canonicity}, we prove homotopy canonicity for HoTT, relying on the strict Rezk completion of the syntax.

\subsection*{Agda formalization}

\begin{sloppypar}
  The constructions of path and reflexive-loop models of HoTT,
  appearing in~\autoref{ssec:path_and_reflloop_models},
  have been partly formalized in Agda.
  The formalization is available at~\url{https://rafaelbocquet.gitlab.io/Agda/20230925_StrictRezkCompletionsAndHomotopyCanonicity/},
  and in the ancillary files attached to the arXiv submission of the paper.
\end{sloppypar}

\subsection*{Acknowledgments}
I thank Christian Sattler for discussions about this work.
His comments on an early draft of this paper have led to simplifications of the proof methods.


\section{Background: cartesian cubical sets}\label{sec:background}

Most of our development takes place internally to the topos $\CcSet$ of presheaves over the cartesian cube category $\square$.
We use the notion of weak composition structure from~\textcite{UnifyingCubicalModels},
but we also rely on diagonal cofibrations,
so our development is valid in cartesian cubical sets but not in De Morgan cubical sets.

We recall the cubical structure that is available in the internal language of cartesian cubical sets.

\subsection{Interval and cofibrations}

There is an \defemph{interval} $\MI : \SSet$,
with two points $0,1 : \MI$
such that $0 \neq 1$.

There is set $\Cof$ of \defemph{cofibrations}.
Every cofibration $\alpha$ has an associated proposition $[\alpha] : \Omega$.

Cofibrations are closed under \defemph{interval equalities} $(i = j)$,
binary and nullary \defemph{conjunctions} ($(\alpha \wedge \beta)$ and $\top$),
binary and nullary \defemph{disjunctions} ($(\alpha \vee \beta)$ and $\bot$)
and \defemph{quantification over the interval} $(\forall i:\MI. \alpha(i))$.

Moreover, we assume that cofibrations are inductively generated by interval equalities, binary conjunctions and binary disjunctions.
We will need to make use of that fact in~\autoref{ssec:ext_finitary},
to justify that the notion of partial element is finitary.

Diagonal cofibrations (interval equalities that are not of the form $(i=\varepsilon)$ for $\varepsilon\in\{0,1\}$)
are only used in the proofs of~\autoref{prop:cat_haswcoe_tfib} and~\autoref{prop:hott_haswcoe_tfib}.

Given a cofibration $\alpha$ and a set $X$,
an element $x : [\alpha] \to X$ is said to be a \defemph{partial element} of $X$.
In that case, a total element $x' : X$ is said to extend $x$ if $[\alpha]$ implies that $x = x'$.
We write $\{ X \mid \alpha \hra x \}$ for the set of total elements of $X$ extending $x$.

A partial element in $[\alpha] \to X$
may be written $[\alpha \mapsto x]$.
When $\alpha$ is a disjunction $(\phi \vee \psi)$,
we may write $[\phi \mapsto x_\phi, \psi \mapsto x_\psi]$
for the unique element of $[\phi \vee \psi] \to X$
that restrict to $x_\phi$ under $\phi$ and $x_\psi$ under $\psi$,
assuming $[\phi \wedge \psi] \to (x_\phi = x_\psi)$.
We write $[]$ for the unique element of $[\bot] \to X$.

Let $A : \MI \to \SSet$ be
a line of sets with two points $x_0 : A(0)$ and $x_1 : A(1)$.
A \defemph{dependent path} $p : x_0 \sim_A x_1$
is a map $p : (i : \MI) \to A(i)$
such that $p(0) = x_0$ and $p(1) = x_1$.
A non-dependent path in $A : \SSet$
is a path over the constant line $(\lambda \_ \mapsto A)$.

We will use the symbols
$(\sim)$ for paths,
$(\simeq)$ for equivalences,
and $(\cong)$ for isomorphisms.

\subsection{Global elements}

When $X$ is a global set in the internal language of $\CcSet$,
we write $1^\ast_\square(X)$
for the external set of global elements of $X$.
This can also be identified
with the evaluation of $X$
at the terminal object $1_\square$ of the cartesian cube category.
The interaction between internal and external reasoning could also be expressed using modalities,
\eg either crisp type theory~\parencite{ShulmanCrisp}
or the dependent right adjoint~\parencite{DependentRightAdjoints}
corresponding to the inverse image $1^\ast_\square : \CcSet \to \CSet$.

We also rely on
$1^\ast_\square$ being a functor preserving finite limits;
in particular it acts on algebras and homomorphisms of any essentially algebraic theory,
\eg if $\CC$ is an internal category,
then $1^\ast_\square(\CC)$ is an external category.

We also assume that $1^\ast_\square(\Cof) \cong \{\true,\false\}$,
and that the corresponding map $[-] : \{\true,\false\} \to 1^\ast_\square(\Omega)$ selects the propositions $\top$ and $\bot$.
Indeed, the only sieves over the terminal object $1_\square$ of the cartesian cube category are $\top$ and $\bot$.

\subsection{Tinyness of the interval}\label{ssec:int_tiny}

The interval is tiny, which means that
the exponentiation functor $(-)^\MI : \CcSet \to \CcSet$
has a right adjoint $(-)_\MI : \CcSet \to \CcSet$.
This is an external statement; the right adjoint cannot be accessed internally.
Most of the time, we don't use the right adjoint directly.
Instead, we use the following consequence:

\begin{lem}[{\cite[Lemma~2.2]{CanonicityCubical}}]\label{lem:int_tiny}
  Let $A$ be a global set and $B$ be a global family over $A^\MI$.
  Then we have a global family $B_\MI$ over $A$ with a bijection of global elements
  \[ 1^\ast_\square((a : X) \to B_\MI(f(a))) \cong 1^\ast_\square((a : X^\MI) \to B(f \circ a)) \]
  natural in global $f : X \to A$.

  The construction $(-)_\MI$ may be chosen so that:
  \begin{enumerate}
  \item If $B$ is $i$-small, then so is $B_\MI$.
  \item The induced isomorphism $(\lambda a \to B(f \circ a))_\MI \cong (B_\MI \circ f)$ is the identity.
    \qed
  \end{enumerate}
\end{lem}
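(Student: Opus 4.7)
The plan is to apply the external right adjoint $(-)_\MI$ to the display map $p : E \to A^\MI$ representing $B$, producing $p' : E_\MI \to (A^\MI)_\MI$, and then to pull back along the unit $\eta_A : A \to (A^\MI)_\MI$ of the adjunction $(-)^\MI \dashv (-)_\MI$ to obtain the desired family $B_\MI$ over $A$. By a mild abuse of notation, the pullback is again called $B_\MI$.

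For the bijection, a global element of $(a : X) \to B_\MI(f(a))$ is by construction a morphism $\tilde s : X \to E_\MI$ satisfying $p' \circ \tilde s = \eta_A \circ f$. Transposing $\tilde s$ across the adjunction yields $s = \epsilon_E \circ \tilde s^\MI : X^\MI \to E$, and by naturality of the counit together with the triangle identity $\epsilon_{A^\MI} \circ (\eta_A)^\MI = \id_{A^\MI}$, the equation $p' \circ \tilde s = \eta_A \circ f$ transposes to $p \circ s = f^\MI$. Such an $s$ is exactly a global element of $(a : X^\MI) \to B(f \circ a)$, and the map $\tilde s \mapsto s$ is a bijection because the adjunction transpose is. Naturality in $f$ then reduces to naturality of the adjunction bijection together with naturality of $\eta$. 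Property (2) will follow from pullback pasting, using the naturality square $(f^\MI)_\MI \circ \eta_X = \eta_A \circ f$; with canonical choices of pullbacks the resulting isomorphism $(\lambda a \to B(f \circ a))_\MI \cong B_\MI \circ f$ can be arranged to be the identity on the nose.

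The hard part is property (1), preservation of $i$-smallness by $(-)_\MI$. The amazing right adjoint exists purely from tinyness, but whether it sends families classified by the universe $\UU_i$ to $i$-small families is a separate question. In cartesian cubical sets this can be established using that $\MI$ is representable, so $(-)_\MI$ admits a concrete pointwise description as a reindexing of presheaves along a projection in the cube category, and such a reindexing preserves $i$-small families. The construction of $B_\MI$ must therefore route through this explicit formula, and one has to check that it still agrees with the abstract adjoint-based description so that the bijection and property (2) above carry over.
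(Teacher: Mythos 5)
The paper does not actually prove this lemma: it is imported by citation (the \verb|\qed| sits directly in the statement), so your attempt is compared against the standard external argument. Your core construction --- display $B$ as $p : E \to A^\MI$, form $p_\MI : E_\MI \to (A^\MI)_\MI$, and pull back along the unit $\eta_A$ --- is that standard argument, and your verification of the bijection (transpose a section $\tilde s$ with $p_\MI \circ \tilde s = \eta_A \circ f$, use naturality of $\epsilon$ and the triangle identity $\epsilon_{A^\MI} \circ (\eta_A)^\MI = \id$ to get $p \circ s = f^\MI$) is correct, as is naturality in $f$.

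Both listed refinements have gaps, however. For (1), your mechanism is backwards: it is the \emph{left} adjoint $(-)^\MI$ that is reindexing (precomposition with $- \times I_1$ on the cube category); $- \times I_1$ has no right adjoint in $\square$ (a count of hom-sets rules it out), so $\yo(c)^\MI$ is not representable and $(-)_\MI$ is a genuine right Kan extension, not a reindexing. The correct route is the pointwise formula identifying the fiber of $B_\MI$ over $a \in A(c)$ with the set of sections of $B$ over $\yo(c)^\MI \to A^\MI$; this is a limit of $i$-small sets over the small category of elements of $\yo(c)^\MI$, and $i$-smallness follows from the closure properties of the universe level, not from $(-)_\MI$ preserving smallness ``because it is a reindexing.'' For (2), the claim that the isomorphism ``can be arranged to be the identity with canonical choices of pullbacks'' is precisely what needs proof, and it fails for bare display maps: $(-)_\MI$ applied to a chosen pullback square need not be the chosen pullback of its image, so pasting only yields a canonical isomorphism. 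The fix is again the explicit pointwise description (equivalently, defining $B_\MI$ by composing the transpose $A \to \UU_\MI$ of a classifying map with a fixed generic family over $\UU_\MI$), where reindexing is literal precomposition and $(B[f^\MI])_\MI = B_\MI[f]$ holds on the nose by strict naturality of adjoint transposition. So the explicit formula is not an optional consistency check appended for (1), as your last sentence suggests; it is the definition from which both (1) and (2) are read off.
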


Remark that both $(-)^\MI$ and $(-)_\MI$ are right adjoint functors,
thus preserving all limits.
This entails that the endo-adjunction descends to an endo-adjunction $((-)^\MI \dashv (-)_\MI)$
on the category of cubical algebras of any essentially algebraic theory.
Moreover, the actions of $(-)^\MI$ and $(-)_\MI$ on algebras can be computed sortwise.

Indeed, from the point of view of functorial semantics in finitely complete categories,
a cubical $\Th$-algebra is a left exact functor $\Th \to \CcSet$.
The functors $(-)^\MI$ and $(-)_\MI$ then act by post-composition.

\subsection{Kan operations}

We review the definitions and properties of the Kan operations, which are used to define the notions of fibrancy and trivial fibrancy.
We use the notion of weak composition structure due to~\textcite{UnifyingCubicalModels}.

\begin{defi}
  A \defemph{weak composition structure} for $A : \MI \to \SSet$ consists of:
  \begin{itemize}
  \item For every $r,s : \MI$, $\alpha : \Cof$, $t : [\alpha] \to (s:\MI) \to A(s)$ and $b : A(r)$ such that $[\alpha] \to t(r) = b$,
    there is an element
    \[ \wcom_A^{r \to s}(t,b) : A(s) \]
    such that $[\alpha] \to \wcom_A^{r \to s}(t,b) = t(s)$.
  \item There is a family of paths
    \[ \underline{\wcom}^{r}_A(t,b) : \wcom_A^{r \to r}(t,b) \sim b \]
    such that $[\alpha] \to \underline{\wcom}^{r}_A(t,b) = (\lambda \_ \mapsto b)$.
    \qedhere
  \end{itemize}
\end{defi}

This defines a global family $\HasWCom : \SSet^\MI \to \SSet$.
We obtain a global family $\HasWCom_\MI : \SSet \to \SSet$ by~\autoref{lem:int_tiny}.
Elements of $\HasWCom_\MI(X)$ are called fibrancy structures over $X$, and sets equipped with a fibrancy structure are called fibrant sets.

\autoref{lem:int_tiny} provides a global map
\[ \epsilon : (X : \MI \to \SSet) (X^\fib : (i : \MI) \to \HasWCom_\MI(X(i))) \to \HasWCom(X), \]
which is essentially the counit of the adjunction $((-)^\MI \dashv (-)_\MI)$.
This justifies using the notations $\wcom_{i.X(i)}$ and $\underline{\wcom}_{i.X(i)}$ whenever $X$ is a line of fibrant sets.

The notion of weak composition structure can be reformulated in terms of limits and split surjections,
this will be used in the proof of~\autoref{lem:cat_fib_components_map}.
\begin{lem}\label{lem:wcom_as_surj}
  A line $A : \MI \to \SSet$ has a weak composition structure if and only for every $r:\MI$ and $\alpha : \Cof$ the map
  \begin{alignat*}{1}
    & (b : A(r))
      \times (w : (s:\MI) \to \{ A(s) \mid [\alpha \wedge (s=r)] \hra b \})
      \times \{ \underline{w} : w(r) \sim b \mid [\alpha] \hra (\lambda \_{} \mapsto b) \} \\
    & \quad \to (b : A(r)) \times (t : [\alpha] \to (s:\MI) \to \{ A(s) \mid [s=r] \hra b \}), \\
    & (b, w, \underline{w}) \mapsto (b, \lambda s \mapsto w(s)).
  \end{alignat*}
  is a split surjection.
  \qed
\end{lem}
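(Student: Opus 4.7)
The plan is to unfold both sides so that a section of the displayed map is literally the data of a weak composition structure, up to rearranging quantifiers. Fix $r : \MI$ and $\alpha : \Cof$. The fibre of the map over a pair $(b,t)$ in the codomain consists of extensions of $t$ (which is only defined under $[\alpha]$) to a totally defined line $w$ together with a path $\underline{w}$ from $w(r)$ to $b$ that becomes reflexive under $[\alpha]$. A section $\sigma$ of the map thus assigns, uniformly in $(b,t)$, exactly such a total filler and its witnessing path.

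For the forward direction, assume $A$ has a weak composition structure. Define
\[ \sigma(b,t) \;\triangleq\; \bigl(b,\; \lambda s.\,\wcom_A^{r \to s}(t,b),\; \underline{\wcom}_A^{r}(t,b)\bigr). \]
The boundary conditions for $\wcom$ and $\underline{\wcom}$ give exactly the required refinements: $\wcom^{r\to s}(t,b) = t(s)$ under $[\alpha]$ (which in particular implies the $[\alpha \wedge (s=r)] \hra b$ constraint), and $\underline{\wcom}^{r}(t,b)$ is the constant path at $b$ under $[\alpha]$. That $\sigma$ is a section is immediate, since postcomposition with the projection restricts the total $w$ back to $t$ under $[\alpha]$.

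For the backward direction, assume a section $\sigma$ is given. For any weak composition datum $(r,s,\alpha,t,b)$ with $[\alpha] \to t(r) = b$, write $\sigma(b,t) = (b, w_{b,t}, \underline{w}_{b,t})$ and set
\[ \wcom_A^{r\to s}(t,b) \;\triangleq\; w_{b,t}(s), \qquad \underline{\wcom}_A^{r}(t,b) \;\triangleq\; \underline{w}_{b,t}. \]
The section property gives $w_{b,t}(s) = t(s)$ under $[\alpha]$, which is the required extension condition for $\wcom$; the constraints packaged into the source of the map yield the refinement of $\underline{\wcom}$ to reflexivity under $[\alpha]$.

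No step is really an obstacle; the content of the lemma is purely a repackaging of the definition of $\HasWCom$ as a lifting problem against a specific forgetful map. The only care needed is bookkeeping of the cofibrant refinements ($[\alpha \wedge (s=r)] \hra b$ versus $[s=r] \hra b$ under $[\alpha]$) to verify that the types in the source and codomain match up under the passage in either direction.
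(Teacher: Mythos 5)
Your proof is correct and matches the paper's intent exactly: the paper states this lemma with no proof (marking it \qed as an immediate unfolding of the definition of $\HasWCom$ into a section of the displayed forgetful map), and your argument is precisely that unfolding, with the bookkeeping of the refinements $[\alpha \wedge (s=r)] \hra b$ versus $[s=r] \hra b$ handled correctly.
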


\begin{defi}
  An \defemph{extension structure} for a set $X$ is the data, for every $\alpha : \Cof$ and partial element $x : [\alpha] \to X$, of a total element
  \[ \ext_X(x) : \{ X \mid \alpha \hra x \}.
    \tag*{\qedhere{}}
  \]
\end{defi}
We write $\HasExt$ for the family of extension structures.
A set equipped with an extension structure is said to be trivially fibrant.

In cubical models, a fibrant set is contractible if and only if it has an extension structure.
For non-fibrant sets, or sets that are not yet known to be fibrant, extension structure are better behaved than the usual definition of contractibility.
\begin{prop}[{\cite[Lemma~5]{CCHM17}}]\label{prop:fibrant_contractibility_iff_ext}
  For any set $X$, there is a logical equivalence
  \[ \HasExt(X) \lra (\HasWCom_\MI(X) \times \isContr(X)), \]
  where
  \[ \isContr(X) = (x : X) \times ((y : X) \to (x \simeq y)). \]

  In other words, a set is trivially fibrant if and only if it is fibrant and contractible.
\end{prop}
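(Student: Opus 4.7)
\medskip

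My plan is to prove the two directions of the equivalence separately, since each is a constructive inter-definition between Kan/extension data.

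For the forward direction $\HasExt(X) \to \HasWCom_\MI(X) \times \isContr(X)$, I would use the extension structure as a powerful ``fill any partial element'' oracle. To produce a weak composition structure, given $r,s:\MI$, $\alpha:\Cof$, $t : [\alpha] \to (s:\MI) \to X$, and $b : X$ with $[\alpha] \to t(r) = b$, I would form the partial element $[\alpha \vee (s = r) \mapsto [\alpha \mapsto t(s),\ s{=}r \mapsto b]]$ on the set $X$; this is well-defined on the intersection by the hypothesis $[\alpha] \to t(r) = b$, and applying $\ext_X$ produces the required composite. For the path $\underline{\wcom}^{r}_A(t,b) : \wcom^{r\to r}(t,b) \sim b$ constantly equal to $b$ on $[\alpha]$, I would again apply $\ext_X$, now to the partial element on $\MI$ defined using the cofibration $\alpha \vee (i{=}0) \vee (i{=}1)$, taking value $\wcom^{r\to r}(t,b)$ at $i=0$, $b$ at $i=1$, and $b$ under $\alpha$ — consistency on intersections holds because $\wcom^{r\to r}(t,b) = t(r) = b$ under $\alpha$. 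For contractibility, extending $[\bot \mapsto []]$ gives a center $x_0 : X$, and for each $y : X$ extending $[(i{=}0 \vee i{=}1) \mapsto [i{=}0 \mapsto x_0,\ i{=}1 \mapsto y]]$ gives a path $x_0 \sim y$; in the fibrant setting these paths give equivalences in the sense demanded.

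For the backward direction $(\HasWCom_\MI(X) \times \isContr(X)) \to \HasExt(X)$, the key step is the standard ``contractibility plus composition gives filling'' argument. Fix a center of contraction $x_0 : X$ with paths $p : (y : X) \to x_0 \sim y$. Given a partial element $x : [\alpha] \to X$, I would form the partial line $t : [\alpha] \to (i : \MI) \to X$ by $t(y)(i) := p(y)(i)$, which restricts to the constant map $\lambda \_ \mapsto x_0$ at $i = 0$ under $\alpha$. Then $\wcom_X^{0 \to 1}(t, x_0)$ — which is well-defined because $[\alpha] \to t(0) = x_0$ — is a total element of $X$ whose restriction under $\alpha$ is $t(1) = x$, exactly an extension of $x$.

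The main obstacle I anticipate is not any single algebraic manipulation but rather keeping the domains and restriction conditions straight: each application of $\ext$ or $\wcom$ must be fed a partial element on a coherent cofibration with compatible data on intersections, and one must check carefully that the constraints match (in particular the restriction of $\underline{\wcom}$ to $[\alpha]$ being the constant path, and the basepoint compatibility $t(0) = x_0$ in the backward direction). A secondary subtlety is the precise reading of $\HasWCom_\MI(X)$: as explained after \autoref{lem:int_tiny}, the counit $\epsilon$ lets us use $\wcom$ directly on a constant line of fibrant sets, so the fibrancy structure on $X$ alone suffices for the above constructions, which matches the formulation of $\isContr$ using path equivalences rather than bare cubical paths.
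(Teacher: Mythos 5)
Your proof is correct and follows essentially the same route as the paper: in the backward direction, apply $\wcom^{0\to1}$ to the contraction paths with base the center of contraction, and in the forward direction read off contractibility by extending on the boundary $(i{=}0)\vee(i{=}1)$ and read off the composition operation (and its coherence path $\underline{\wcom}$) by extending suitable partial elements. Two cosmetic remarks: the paper defines $\wcom^{r\to s}(t,b)$ by extending on just $[\alpha]$ rather than $[\alpha\vee(s{=}r)]$, which avoids an unneeded diagonal cofibration (diagonals are only invoked in the paper at two specific later points); and in the forward direction the constructions should be phrased for an arbitrary line $X:\MI\to\SSet$ with pointwise extension structures $\ext_{X(s)}$, then transposed into an element of $\HasWCom_\MI(X)$ via~\autoref{lem:int_tiny} --- the counit $\epsilon$ you mention is the tool for the \emph{backward} direction (using a fibrancy structure on a line), not for producing one.
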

\begin{proof}
  We prove both implications.
  \begin{description}
  \item[$(\La)$]
    Assume that $X$ is fibrant and contractible.
    We equip $X$ with an extension structure.

    Take a partial element $x : [\alpha] \to X$.
    Write $x_0 : X$ for the center of contraction of $X$.
    Since $X$ is contractible, we have a partial path $p : [\alpha] \to x_0 \simeq x$.
    
    Now $\ext_X(x) \triangleq \wcom_X^{0 \to 1}([\alpha\ i \mapsto p(i)], x_0)$ is a total element extending $x$.
  \item[$(\La)$]
    Assume that $X$ has an extension structure.

    We first prove that $X$ is contractible.
    We can find a center of contraction $\mathsf{ext}([])$ by extending the empty partial element.
    Given $x : X$, $y : X$ and $i : \MI$, we can define an element
    $p(i) = \mathsf{ext}([(i = 0) \mapsto x, (i = 1) \mapsto y])$.
    Then $p : \MI \to X$ is a path between $x$ and $y$, as needed

    We now prove that $X$ is fibrant, by defining a global map
    \[ (X : \SSet) \times \HasExt(X) \to \HasWCom_\MI(X). \]
    By~\autoref{lem:int_tiny}, it suffices to construct a map
    \[ (X : \SSet^\MI) \times (\ext_{X(-)} : (i:\MI) \to \HasExt(X(i))) \to \HasWCom(X). \]
    We pose
    \begin{alignat*}{1}
      & \wcom_X^{r \to s}(t,b) \triangleq \ext_{X(s)}([\alpha \mapsto t(s)]), \\
      & \underline{\wcom}_X^{r}(t,b,i) \triangleq \ext_{X(r)}([\alpha \mapsto b,\ (i=0) \mapsto \wcom_X^{r \to r}(t,b),\ (i=1) \mapsto b]).
    \end{alignat*}
    One can check that the necessary boundary conditions are satisfied.
  \end{description}
\end{proof}

\subsection{Fibrancy from reflexive graphs and pseudo-reflexive graphs}

For a universe level $n$, the universe $\SSet^\fib_n$ of $n$-small fibrant sets is defined as
\[ \SSet^\fib_n \triangleq (A : \SSet_n) \times \HasWCom_\MI(A). \]

As shown by~\textcite{UnifyingCubicalModels}, it is univalent and closed under $\Pi$-types, $\Sigma$-types, Path-types, $\Glue$-types, \etc

The proof of fibrancy of the universe factors through the proof of the equivalence extension property,
which follows from the construction of $\Glue$-types.
The equivalence extension property says that the set
\[ (B : \SSet^\fib) \times \Equiv(A,B) \]
is trivially fibrant for any $A : \SSet^\fib$
\ie that we have an extension structure for equivalences with a fixed left endpoint.

The fibrancy of the universe then follows from the following lemma,
instantiated at $A = \SSet^\fib$, $E_A = \Equiv$ and $r(A) = \id_A$.
\begin{lem}
  Take a global reflexive graph with
  vertices $A : \SSet$,
  edges $E_A : V \to V \to \SSet$ and
  reflexivity map $r : (a : A) \to E_A(a,a)$.
  Assume that the following two conditions are satisfied:
  \begin{description}
  \item[$A$ has a weak coercion structure]
    For every line $a : \MI \to V$, we have operations
    \[ \wcoe_a^{r \to s} : E_A(a(r),a(s)) \]
    and
    \[ \wcoh_a^{r} : (\wcoe_a^{r \to r} = r(a)). \]
  \item[$A$ is homotopical]
    For every $a_1 : A$, the set
    \[ (a_2 : A) \times E_A(a_1,a_2) \]
    is trivially fibrant.
  \end{description}
  Then $V$ is fibrant.
\end{lem}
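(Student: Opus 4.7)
The plan is to verify the split-surjectivity characterization of fibrancy given by \autoref{lem:wcom_as_surj}. Fix $r : \MI$, a cofibration $\alpha : \Cof$, a base point $b : A$, and a partial line $t : [\alpha] \to (s : \MI) \to \{A \mid [s=r] \hra b\}$. I must produce a total line $w : (s : \MI) \to \{A \mid [\alpha \wedge (s=r)] \hra b\}$ with $w(s) = t(s)$ on $\alpha$, together with a path $\underline{w} : \{w(r) \sim b \mid [\alpha] \hra (\lambda \_ \mapsto b)\}$.

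The central auxiliary object is the set $T \triangleq (a_2 : A) \times E_A(b, a_2)$, which is trivially fibrant by the homotopicity hypothesis applied at $a_1 = b$. Working internally to $[\alpha]$ the partial line $t$ is total, so weak coercion produces edges $\wcoe_t^{r \to s} : E_A(t(r), t(s)) = E_A(b, t(s))$ for every $s$. For each $s : \MI$, I then define
\[ (w(s), e(s)) \triangleq \ext_T([\alpha \mapsto (t(s), \wcoe_t^{r \to s})]). \]
By construction $w(s) = t(s)$ on $\alpha$, and in particular $w(r) = t(r) = b$ on $\alpha \wedge (s = r)$, supplying the first half of the required datum.

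For the witnessing path $\underline{w}$, I apply the extension structure of $T$ once more, this time parametrized by $i : \MI$:
\[ (\underline{w}(i), \underline{e}(i)) \triangleq \ext_T\bigl([\,\alpha \mapsto (b, \wcoh_t^r(i)),\ (i = 0) \mapsto (w(r), e(r)),\ (i = 1) \mapsto (b, r(b))\,]\bigr). \]
Consistency holds on each pairwise intersection: on $\alpha \wedge (i = 0)$ we have $(b, \wcoh_t^r(0)) = (t(r), \wcoe_t^{r \to r}) = (w(r), e(r))$; on $\alpha \wedge (i = 1)$ the endpoint $\wcoh_t^r(1) = r(b)$ matches the $(i=1)$-face; and the two non-$\alpha$ faces are disjoint since $0 \neq 1$. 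Reading off the first coordinate yields $\underline{w}(0) = w(r)$, $\underline{w}(1) = b$, and $\underline{w}(i) = b$ on $\alpha$, as required.

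The principal obstacle is orchestrating consistent boundary data for the second extension, where the coherence witness $\wcoh_t^r$ must be inserted precisely so as to mediate between the transported edge $\wcoe_t^{r \to r}$ at the $i = 0$ face and the reflexive edge $r(b)$ at the $i = 1$ face on $\alpha$; without this mediation the two extensions cannot be combined compatibly. Once this is in place, the pair $(w, \underline{w})$ supplies the section required by \autoref{lem:wcom_as_surj}, so $A$ carries a weak composition structure and is therefore fibrant.
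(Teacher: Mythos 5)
Your proof is correct and takes essentially the same route as the paper, which defers to the more general \autoref{lem:fibrancy_from_reflgraph}: your construction is exactly its specialization to a non-dependent reflexive graph, with the same two applications of the extension structure of $(a_2 : A) \times E_A(b,a_2)$ — first along $s$ to produce $w$, then along $i$ to produce $\underline{w}$. The only blemish is notational: in this lemma $\wcoh_t^{r}$ is a strict equality $\wcoe_t^{r \to r} = r(t(r))$ rather than a path, so the face you write as $\wcoh_t^{r}(i)$ should be read as the constant $r(b)$ (equivalently $\wcoe_t^{r \to r}$) on $[\alpha]$; with that reading your consistency checks go through verbatim, and your direct use of $r(b)$ at the $(i=1)$ face is the correct degenerate form of the auxiliary datum $d$ appearing in the paper's general proof.
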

\begin{proof}
  Proof omitted, we prove a more general version in~\autoref{lem:fibrancy_from_reflgraph}.
\end{proof}

We give a generalization of that proof of fibrancy.
A pseudo-reflexive graph object in a category is a diagram indexed by the category
\[ \begin{tikzcd}
    & R \ar[r]
    \ar[r, "p_e"]
    & E
    \ar[r, "p_1", shift left]
    \ar[r, "p_2"', shift right]
    & V,
  \end{tikzcd} \]
with $p_1 \circ p_e = p_2 \circ p_e$.

This category is an inverse replacement of the indexing category for reflexive graphs.
The object $V$ corresponds to vertices, the object $E$ corresponds to edges, and the object $R$ corresponds to reflexive loops.
A reflexive graph is exactly a pseudo-reflexive graph $(V,E,R)$ such that $V = R$.

A pseudo-reflexive graph in $\CSet$ is a triple $(A,E_A,R_A)$, where
\begin{alignat*}{1}
  & A : \SSet, \\
  & E_A : (a_1 : A) (a_2 : A) \to \SSet, \\
  & R_A : (a : A) (a_e : E_A(a,a)) \to \SSet.
\end{alignat*}
Throughout the paper, we will use the same notations when quantifying over the elements of a pseudo-reflexive graph
($a:A$, $a_e : E_A(a_1,a_2)$ and $a_r : R_A(a,a_e)$).
We may implicitly quantify over some elements,
\eg quantifying over $a_e : E_A(a_1,a_2)$ may implicitly quantify over $a_1,a_2 : A$.

\begin{lem}\label{lem:fibrancy_from_reflgraph}
  Assume given the data of a global dependent pseudo-reflexive graph $(B,E_B,R_B)$ over a base pseudo-reflexive graph $(A,E_A,R_A)$:
  \begin{alignat*}{1}
    & A : \SSet, \\
    & E_A : A \to A \to \SSet, \\
    & R_B : (a : A) (a_e : E_A(a,a)) \to \SSet, \\
    & B : A \to \SSet, \\
    & E_B : E_A(a_1,a_2) \to B(a_1) \to B(a_2) \to \SSet, \\
    & R_B : R_A(a,a_e) \to (b : B(a)) (b_e : E_B(a_e,b,b)) \to \SSet.
  \end{alignat*}

  We also assume (globally) the following conditions:
  \begin{description}
  \item[$A$ and $B$ have weak coercion structures]
    For any $a : \MI \to A$, we have
    \[ \wcoe_a^{r \to s} : E_A(a(r),a(s)) \]
    and
    \[ \wcoh_a^{r} : R_A(a,\wcoe_a^{r \to r}) \]
    and for any $b : (i:\MI) \to B(a(i))$ we have
    \[ \wcoe_b^{r \to s} : E_B(\wcoe_a^{r \to s},b(r),b(s)) \]
    and
    \[ \wcoh_b^{r} : R_B(\wcoh_a^{r}, b, \wcoe_b^{r \to r}). \]

  \item[{$B$ is homotopical}]
    For any $b_1 : B(a_1)$ and $a_e : E_A(a_1,a_2)$, the set
    \[ (b_2 : B(a(s))) \times (b_e : E_B(a_e,b,b_e)) \]
    is trivially fibrant, and that for any $b : B(a)$ and $a_r : R_A(a,a_e)$, the set
    \[ (b_e : E_B(a_e,b,b)) \times (b_r : R_B(a_r,b,b_e)) \] is trivially fibrant.
  \end{description}
  
  Then $B$ is a family of fibrant sets.
\end{lem}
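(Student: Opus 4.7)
The plan is to construct a weak composition structure on $B$ by combining the weak coercion data with the two trivial fibrancy conditions supplied by homotopicality. I will use the reformulation of \autoref{lem:wcom_as_surj}, which says that giving weak composition data amounts to splitting the forgetful map from ``base + total-cylinder + reflexive path'' to ``base + partial cylinder''. This packages the $\wcom$ and $\underline{\wcom}$ data together, which is convenient since we must ensure their compatibility simultaneously.

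By \autoref{lem:int_tiny}, it suffices to construct $\HasWCom(B \circ a)$ uniformly in lines $a : \MI \to A$. Fix such a line $a$, a cofibration $\alpha$, points $r,s : \MI$, a base $b_r : B(a(r))$ and a partial cylinder $t : [\alpha] \to (s' : \MI) \to B(a(s'))$ with $[\alpha] \to t(r) = b_r$. The key construction: set $a_e := \wcoe_a^{r \to s} : E_A(a(r),a(s))$ and consider the set $X_s := (b_2 : B(a(s))) \times E_B(a_e, b_r, b_2)$. Under $\alpha$, the line $t$ (viewed as a line of $B$ over $a$) together with its weak coercion $\wcoe_t^{r \to s}$ gives a partial element of $X_s$. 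By the first homotopicality assumption, $X_s$ is trivially fibrant, so we can extend this partial element to a total $(w(s), b_e(s))$, and set $\wcom_B^{r \to s}(t, b_r) := w(s)$.

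For the reflexive-loop data $\underline{\wcom}$, I would use the second homotopicality assumption: for $b_r : B(a(r))$ and $a_r := \wcoh_a^r : R_A(a, \wcoe_a^{r \to r})$, the set $(b_e : E_B(\wcoe_a^{r \to r}, b_r, b_r)) \times R_B(a_r, b_r, b_e)$ is trivially fibrant. This supplies a canonical ``reflexive-loop'' edge $\hat b_e$ over $a_r$ that under $\alpha$ must match $\wcoh_b^{r}$ (applied to the line $t$) via the compatibility between $\wcoe$ and $\wcoh$. Feeding this as boundary data on the $(s = r)$-face and using the extension structure of $X_s$ with the enlarged cofibration $\alpha \vee (s = r)$, one obtains $w(s)$ together with a path $\underline{w} : w(r) \sim b_r$ by a further extension-structure argument, yielding the required split of \autoref{lem:wcom_as_surj}.

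The main obstacle will be Step 3: carefully orchestrating the partial elements so that all boundary conditions agree on the overlaps, in particular that on $\alpha \wedge (s = r)$ the coercion-extension boundary and the reflexive-loop boundary are definitionally equal. This is a routine but delicate bookkeeping problem that requires the compatibility between $\wcoe$/$\wcoh$ on $A$ and on $B$, namely that $\wcoh_b^r$ lies over $\wcoh_a^r$. Once these compatibilities are spelled out, both pieces fit together inside one application of the trivial fibrancy of the appropriate $\Sigma$-type, so no further ingredients beyond those listed in the hypotheses are needed.
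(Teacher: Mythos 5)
Your proposal is correct and follows essentially the same route as the paper: reduce to $\HasWCom$ of lines via tininess, obtain $\wcom^{r\to s}(t,b)$ by extending the partial element $[\alpha \mapsto (t(s),\wcoe_t^{r\to s})]$ in the trivially fibrant set $(b_2 : B(a(s)))\times E_B(\wcoe_a^{r\to s},b,b_2)$, and use the second homotopicality condition to produce a reflexivity-like edge at $b$ over $\wcoh_a^{r}$ agreeing with $\wcoe_t^{r\to r}$ under $\alpha$. The one place you diverge is in handling the $s=r$ endpoint: you enlarge the cofibration to $\alpha\vee(s=r)$, which is a \emph{diagonal} cofibration, whereas the paper keeps $w(s)$ defined only relative to $\alpha$ and instead builds the homotopy $\underline{w}(i)$ by a separate extension along $\alpha\vee(i=0)\vee(i=1)$ with endpoints $w(r)$ and $(b,d.1)$; your variant is valid in cartesian cubical sets (and even yields a strict rather than weak regularity at $r$), but it would break the paper's explicit accounting that diagonal cofibrations are used only in \autoref{prop:cat_haswcoe_tfib} and \autoref{prop:hott_haswcoe_tfib}, so the paper's endpoint-only version is preferable if one cares about portability to settings without diagonal cofibrations.
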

\begin{proof}
  By~\autoref{lem:int_tiny}, it suffices to construct a global element of $(a : A^\MI) \to \HasWCom(\lambda i \mapsto B(a(i)))$.
  
  Take $r : \MI$, a cofibration $\alpha : \Cof$ and elements $t : [\alpha] \to (s:\MI) \to B(a(s))$ and $b : B(a(r))$ such that $[\alpha] \to t(r) = b$.
  We use the homotopicality of $B$ to extend some partial elements.

  Given $s : \MI$, we let $w(s)$ be an element of $(b_2 : B(a(s))) \times (b_e : E_B(\wcoe_a^{r \to s},b,b_e))$ extending the partial element
  \[ [\alpha \mapsto (t(s),\wcoe_t^{r \to s})]. \]

  We let $d$ be an element of $(b_e : E_B(\wcoe_a^{r \to r},b,b)) \times (b_r : R_B(\wcoh_a^{r},b,b_e))$ extending
  \[ [\alpha \mapsto (\wcoe_t^{r \to r}, \wcoh_t^r)]. \]
  
  Given $i : \MI$, we let $\underline{w}(i)$ be an element of $(b_2 : B(a(r))) \times (b_e : E_B(\wcoe_a^{r \to r},b,b_e))$ extending
  \[ [\alpha \mapsto (t(r),\wcoe_t^{r \to r}),\ (i=0) \mapsto w(s),\ (i=1) \mapsto (b,d.1)]. \]

  We can then define a weak composition structure as follows:
  \begin{alignat*}{1}
    & \wcom_{B(a(-))}^{r \to s}(t,b) = w(s).1, \\
    & \underline{\wcom}_{B(a(-))}^{r}(t,b,i) = \underline{w}(i).1.
      \tag*{\qedhere}
  \end{alignat*}
\end{proof}

\subsection{Propositional truncation without homogeneous fibrant replacement}

The propositional truncation of a fibrant set can be defined as a higher inductive type.
The semantics of higher inductive types in $\CcSet$ involves a set freely generated by the constructors of the higher inductive type and additional constructors ensuring fibrancy (a form of fibrant replacement).

As observed by~\textcite{FoundationSyntheticAlgGeo}, the propositional truncation can actually be defined without fibrant replacement, when trivial fibrancy is used to express propositionality (recall that $\mathsf{isProp}(X) \lra (X \to \mathsf{isContr}(X))$).

We recall how to perform this construction (in the simpler case of global sets).

\begin{thm}[\cite{FoundationSyntheticAlgGeo}]\label{thm:trunc_wo_hcom}
  Let $X$ be a global fibrant set and $\overline{X}$ be the set freely generated by a map $i : X \to \overline{X}$ and by an element of $\overline{X} \to \HasExt(\overline{X})$, \ie an operation
  \begin{alignat*}{1}
    & \mathsf{ext} : (x : \overline{X})\ (\alpha : \Cof)\ (y : [\alpha] \to \overline{X}) \to \{ \overline{X} \mid \alpha \hra y \}.
  \end{alignat*}

  Then $\overline{X}$ is fibrant and $i$ is surjective (up to paths), \ie $\overline{X}$ is a propositional truncation of $X$.
\end{thm}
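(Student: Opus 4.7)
The plan is to exploit the extension operation $\mathsf{ext}$ that $\overline{X}$ carries by construction, treating the base point of each Kan-composition problem as the seed argument that $\mathsf{ext}$ requires. Through \autoref{prop:fibrant_contractibility_iff_ext} this simultaneously yields fibrancy of $\overline{X}$ and propositionality; the universal property of $\overline{X}$ as a free algebra then upgrades propositionality into the universal property of propositional truncation, whence surjectivity of $i$ up to paths.

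For fibrancy, given $r, s : \MI$, a cofibration $\alpha$, a partial path $t : [\alpha] \to \MI \to \overline{X}$ and a compatible base $b : \overline{X}$, I would set
\[
  \wcom^{r \to s}(t, b) \;\triangleq\; \mathsf{ext}\bigl(b,\ \alpha,\ \lambda h.\ t(h)(s)\bigr),
\]
using the base $b$ as the seed. The degeneracy path comes from another application of $\mathsf{ext}$,
\[
  \underline{\wcom}^{r}(t, b)(i) \;\triangleq\; \mathsf{ext}\bigl(b,\ \alpha \vee (i{=}0) \vee (i{=}1),\ [\alpha \mapsto b,\ (i{=}0) \mapsto \wcom^{r \to r}(t, b),\ (i{=}1) \mapsto b]\bigr),
\]
where the boundary data cohere because under $\alpha$ the element $\wcom^{r \to r}(t, b)$ reduces to $t(r) = b$. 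For propositionality, given $y_0, y_1 : \overline{X}$, the formula
\[
  p(i) \;\triangleq\; \mathsf{ext}\bigl(y_0,\ (i{=}0) \vee (i{=}1),\ [(i{=}0) \mapsto y_0,\ (i{=}1) \mapsto y_1]\bigr)
\]
defines a path from $y_0$ to $y_1$, so $\overline{X}$ is a fibrant proposition in the cubical sense.

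Finally, for surjectivity of $i$ up to paths, I would invoke the universal property of $\overline{X}$ as a free algebra. For any fibrant proposition $P$ and any $f : X \to P$, \autoref{prop:fibrant_contractibility_iff_ext} supplies a map $P \to \HasExt(P)$, equipping $P$ with the same algebraic signature as $\overline{X}$; freeness of $\overline{X}$ then produces a unique factorization $\bar f : \overline{X} \to P$ extending $f$, which is exactly the universal property of propositional truncation. The principal obstacle is not any single step above---each becomes routine once the seed-threading idea is in hand---but justifying that the interval-and-cofibration-indexed $\mathsf{ext}$ operation really endows $\overline{X}$ with the expected recursion principle internally to $\CcSet$ without an ambient fibrant replacement; this is the conceptual point the theorem is asserting, and it is also what will be reused in the more elaborate Rezk completion constructions later in the paper.
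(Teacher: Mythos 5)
Your proof is correct and follows essentially the same route as the paper: the weak composition structure is given by exactly the same two applications of $\mathsf{ext}$ with $b$ as the seed (your degeneracy-path formula even cleans up an apparent typo in the paper's version), and propositionality comes from the same two-endpoint extension. The only divergence is the final step, where the paper deduces the universal property by comparing $\overline{X}$ with the HIT $\trunc{X}$ (both being fibrant propositions under $X$), whereas you verify it directly by equipping an arbitrary fibrant proposition $P$ with the $\mathsf{ext}$-algebra structure via \autoref{prop:fibrant_contractibility_iff_ext} and invoking freeness of $\overline{X}$ --- the same ingredients, packaged slightly differently.
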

\begin{proof}
  We first prove the fibrancy, \ie we construct an element of $\HasWCom_\MI(\overline{X})$.
  By~\autoref{lem:int_tiny}, it suffices to construct an element of $\HasWCom(\lambda \_ \mapsto \overline{X})$.
  This weak composition structure is defined as follows:
  \begin{alignat*}{1}
    & \wcom_{\overline{X}}^{r \to s}(t,b) \triangleq \mathsf{ext}(b, [\alpha \mapsto t(s)]), \\
    & \overline{\wcom}_{\overline{X}}^{r}(t,b) \triangleq \mathsf{ext}(b, [\alpha \mapsto t(s),\ (i=0) \mapsto \wcom_{\overline{X}}^{r \to s}(t,b),\ (i=1) \mapsto b]).
  \end{alignat*}

  Alternatively, we could have used~\autoref{lem:fibrancy_from_reflgraph},
  with $A = 1$, $B = \overline{X}$, $E_B(-) = 1$ and $R_B(-) = 1$.

  Since $X$ is fibrant, we can also construct its propositional truncation $\trunc{X}$ as usual.
  The universal properties of $\trunc{X}$ and $\overline{X}$ provide a logical equivalence $\trunc{X} \lra \overline{X}$, implying that $\overline{X}$ is a propositional truncation of $X$.
\end{proof}

\subsection{Extension structures are finitary}\label{ssec:ext_finitary}

\DeclareRobustCommand{\yo}{\mathbf{y}}

Let $X$ be a cubical set.
Then the data of a global extension structure on $X$ unfolds externally to the following components:
\begin{alignat*}{1}
  & \ext : (I : \square) (\alpha : \Cof(I)) (x : [\alpha] \Rightarrow X) \\
  & \quad \to \{ X(I) \mid \alpha(\id_I) \hra x(\id_I) \}, \\
  & {-} : (f : \square(J,I)) (\alpha : \Cof(I)) (x : [\alpha] \Rightarrow X) \\
  & \quad \to \ext(I,\alpha,x)[F] = \ext(J,\alpha[F],x \circ \yo(F)),
\end{alignat*}
where $\yo$ is the Yoneda embedding and $[\alpha]$ is the subobject of $\yo(I)$ determined by the cofibration $\alpha$ (a sieve on $I$).

The set of natural transformations $x : [\alpha] \Rightarrow X$
can be seen as a limit over the category of elements $\int_\square [\alpha]$.
This category is infinite, so a naive unfolding of the notion of extension structure is infinitary.

However we can use the specific definition of cofibrations in cartesian cubical sets
to replace these limits by finite limits.
\begin{lem}
  For every $I \in \square$ and cofibration $\alpha : \Cof(I)$,
  we can write the set $([\alpha] \Rightarrow X)$ as a finite limit whose shape only depends on $\alpha$,
  naturally in $X$.

  (Equivalently, there is an initial functor $\CC \to \int_\square [\alpha]$ where $\CC$ is a finite category.)
\end{lem}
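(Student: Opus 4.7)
The plan is to induct on the inductive generation of $\alpha$ by interval equalities, binary conjunctions, and binary disjunctions, while first putting $\alpha$ into a disjunctive normal form. Using distributivity of $\wedge$ over $\vee$ in the lattice of cofibrations (derived by repeatedly applying $(\beta_1 \vee \beta_2) \wedge \gamma = (\beta_1 \wedge \gamma) \vee (\beta_2 \wedge \gamma)$), I will deterministically rewrite $\alpha$ as $\alpha = \bigvee_{i=1}^{n} \beta_i$, where each $\beta_i$ is a conjunction of interval equalities. Since the rewriting procedure is fully determined by the syntactic structure of $\alpha$, the resulting indexing data will depend only on $\alpha$.

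Next, I will establish the key geometric fact: a conjunction of interval equalities $\beta$ on a cube $I \in \square$ cuts out a representable subobject $[\beta] \cong \yo(I_\beta) \hookrightarrow \yo(I)$, where $I_\beta$ is the subcube obtained by imposing the given coordinate identifications. This subcube is available in the cartesian cube category; it degenerates to the initial presheaf when the equations are unsatisfiable (for instance when $\beta$ implies $(0=1)$), but this case is handled uniformly. In particular, for any $S \subseteq \{1,\ldots,n\}$, the intersection $\bigcap_{i \in S} [\beta_i] = [\bigwedge_{i \in S} \beta_i]$ is again a conjunction of interval equalities, and fits into the same representable pattern with some $I_{\beta_S} \in \square$.

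Finally, a finite union of subobjects in a topos admits a Cech-style finite colimit presentation, namely
\[ \bigcup_{i=1}^{n} [\beta_i] \cong \colim_{\emptyset \neq S \subseteq \{1,\ldots,n\}} [\beta_S], \]
indexed by the poset of nonempty subsets ordered by reverse inclusion. This is a general fact about computing joins of subobjects via inclusion-exclusion, applicable in $\CcSet$ because colimits there are computed pointwise. Applying the contravariant functor $(- \Rightarrow X)$ to the resulting finite colimit of representables yields, via the Yoneda lemma, the desired finite limit
\[ ([\alpha] \Rightarrow X) \cong \lim_{\emptyset \neq S \subseteq \{1,\ldots,n\}} X(I_{\beta_S}), \]
natural in $X$, whose shape depends only on the canonical DNF of $\alpha$.

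The hardest part will be pinning down the canonical DNF so that the diagram shape is genuinely a function of $\alpha$ (rather than of a specific expression tree), and verifying that the Cech decomposition cleanly accommodates degenerate cases (empty or redundant intersections). Both are essentially bookkeeping once the central geometric input—the representability of conjunctions of interval equalities in the cartesian cube category—is in hand.
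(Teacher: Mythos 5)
Your argument is correct, but it takes a genuinely different route from the paper. The paper proves the lemma by a direct structural induction on a decomposition of $\alpha$: interval equalities give $([(i=j)] \Rightarrow X) \cong X(I\setminus\{i\})$ (or $\{\star\}$, or $X(I)$ in the degenerate cases), conjunction is handled by currying, $([\alpha\wedge\beta]\Rightarrow X) \cong ([\alpha]\Rightarrow([\beta]\Rightarrow X))$, and disjunction by the pullback $([\alpha]\Rightarrow X)\times_{([\alpha\wedge\beta]\Rightarrow X)}([\beta]\Rightarrow X)$; no normal form is needed. You instead first pass to a disjunctive normal form, observe that a conjunction of interval equalities is a representable subobject of $\yo(I)$ (or empty), and then present the union as a finite colimit over the poset of nonempty subsets, turning it into a finite limit of evaluations by Yoneda. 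Both arguments are sound: your geometric step (conjunctions of diagonal/face equations carve out subcubes, available because $\square$ is the cartesian cube category) is correct, the \v{C}ech presentation of a finite union of subobjects holds pointwise in any presheaf topos, and the degenerate empty-intersection vertices contribute terminal objects, which are harmless in a finite limit. What your approach buys is a completely explicit limit shape (a cube of evaluations indexed by nonempty subsets), at the cost of the DNF rewriting (with its potential exponential blow-up) and the verification of the colimit formula; the paper's induction is shorter and avoids any normalization. Your concern about making the DNF canonical is largely moot: the phrase ``only depends on $\alpha$'' is there to contrast with dependence on $X$, and the paper's own proof likewise only produces a shape per chosen decomposition, which is all the finitarity corollary requires.
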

\begin{proof}
  We prove the result by induction on decompositions of $\alpha$ using interval equality, binary disjunctions and binary conjunctions.
  The result then follows from the following natural isomorphisms:
  \begin{alignat*}{1}
    & ([(0 = 1)] \Rightarrow X) \cong \{ \star \}, \\
    & ([(i = i)] \Rightarrow X) \cong X(I), \\
    & ([(i = j)] \Rightarrow X) \cong X(I \backslash \{i\}),
      \tag*{(When $i \in I$, $j \in I+\{0,1\}$ and $i \neq j$)}
    \\
    & ([\alpha \wedge \beta] \Rightarrow X) \cong ([\alpha] \Rightarrow ([\beta] \Rightarrow X)), \\
    & ([\alpha \vee \beta] \Rightarrow X) \cong ([\alpha] \Rightarrow X) \times_{([\alpha \wedge \beta] \Rightarrow X)} ([\beta] \Rightarrow X).
      \tag*{\qedhere}
  \end{alignat*}
\end{proof}

\begin{cor}
  The notion of extension structure over a cubical set is finitary:
  there is an essentially algebraic theory $\Th_{\CcSet_\ext}$ extending the essentially algebraic theory of cubical sets.
  \qed
\end{cor}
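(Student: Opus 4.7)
The plan is to extend the essentially algebraic theory $\Th_{\CcSet}$ of cubical sets with one operation symbol $\ext_{I,\alpha}$ for each pair $(I,\alpha)$ with $I \in \square$ and $\alpha \in \Cof(I)$, and to use the previous lemma to ensure each of these operations has finite arity. Recall first that $\Th_{\CcSet}$ itself has one sort $X_I$ for each $I \in \square$ and one unary operation symbol $f^\ast : X_I \to X_J$ for each $f : J \to I$, with the usual functoriality equations; although there are infinitely many sorts and operations, each is of finite arity, which is all that is required for the theory to be essentially algebraic.

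Next, the previous lemma presents $[\alpha] \Rightarrow X$ as a finite limit $L_\alpha(X)$ in the sorts of $X$, naturally in $X$. Concretely, this means there are finitely many projections $J_1,\ldots,J_n \in \square$ and finitely many equations $e_1,\ldots,e_m$ between unary restrictions such that
\[ ([\alpha] \Rightarrow X) \;\cong\; \{\, (x_1,\ldots,x_n) \in X_{J_1} \times \cdots \times X_{J_n} \mid e_1 \wedge \cdots \wedge e_m \,\}. \]
I introduce $\ext_{I,\alpha}$ as a partial operation of arity $(J_1,\ldots,J_n) \to I$, whose equational domain is precisely $e_1 \wedge \cdots \wedge e_m$. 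I then impose two families of equations: a \emph{boundary} family stating that for each canonical generator $g : J \to I$ that factors through $[\alpha]$, the restriction $g^\ast \ext_{I,\alpha}(x_1,\ldots,x_n)$ equals the appropriate component of the tuple (finitely many such $g$ suffice, again by the finiteness of the diagram); and a \emph{naturality} family stating, for each $F : J \to I$, that $F^\ast \ext_{I,\alpha}(x_1,\ldots,x_n) = \ext_{J,\alpha[F]}(x'_1,\ldots,x'_{n'})$, where $(x'_1,\ldots,x'_{n'})$ is the finite tuple representing $x \circ \yo(F) \in L_{\alpha[F]}(X)$, computed using the naturality clause in the previous lemma as a polynomial in the $x_k$ and the restriction operations.

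What remains to check is that algebras of this extended theory correspond bijectively, and naturally, to cubical sets equipped with a global extension structure. One direction is immediate: any algebra for $\Th_{\CcSet_\ext}$ is a cubical set, and the operations $\ext_{I,\alpha}$ assemble, via the isomorphism $L_\alpha(X) \cong ([\alpha] \Rightarrow X)$, into the data required by a global extension structure, the boundary and naturality equations of the theory matching those of the definition. Conversely, a global extension structure on $X$ immediately determines operations $\ext_{I,\alpha}$ satisfying both families of equations. The main subtlety is bookkeeping: one must verify that the chosen finite-limit presentation of $[\alpha]$ from the previous lemma is compatible with restriction along arbitrary $F : J \to I$, so that the naturality equations can indeed be written as a single polynomial identity in the generating operations rather than requiring a quantification over all morphisms of $\square$. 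This compatibility is exactly the naturality clause in the previous lemma, so no genuinely new argument is needed; the corollary then follows.
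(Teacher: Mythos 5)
Your proof is correct and matches what the paper leaves implicit behind the bare \qed: the preceding lemma gives a finite-limit presentation of $[\alpha]\Rightarrow X$, so one adds a partial operation $\ext_{I,\alpha}$ of finite arity for each $(I,\alpha)$ with the finite-limit conditions as its equational domain, together with the boundary and naturality equations. Your closing observation about compatibility of the finite-limit presentations under restriction along $F:J\to I$ is the right thing to watch, and it does hold because the lemma's decomposition is defined by recursion on the generators of $\Cof$ (interval equalities, conjunctions, disjunctions), which commute with restriction; so no genuine gap remains.
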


We care more generally about extension structures over global cubical families,
but the argument extends directly to that case.
\begin{cor}
  The notion of extension structure over a cubical family is finitary:
  there is an essentially algebraic theory $\Th_{\mathbf{cFam}_\ext}$ extending the essentially algebraic theory of cubical families.
  \qed
\end{cor}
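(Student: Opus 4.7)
The plan is to lift the finite-limit decomposition in the preceding lemma from cubical sets to cubical families, then package the result as an essentially algebraic extension. Externally, a cubical family $X$ over a cubical set $\Gamma$ assigns a set $X(I,\gamma)$ to each $I \in \square$ and $\gamma \in \Gamma(I)$, functorially in $I$, and a partial element over $\gamma$ along a cofibration $\alpha : \Cof(I)$ is a section of $X$ over $[\alpha]$ sitting above the restriction of $\gamma$ to $[\alpha]$. An extension structure unfolds to a family of operations
\begin{alignat*}{1}
& \ext : (I : \square)\,(\gamma : \Gamma(I))\,(\alpha : \Cof(I))\,(x : [\alpha] \Rightarrow_\gamma X) \\
& \quad \to \{\, X(I,\gamma) \mid \alpha(\id_I) \hra x(\id_I) \,\}
\end{alignat*}
together with naturality in reindexing along $\square(J,I)$, completely analogous to the set case.

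The key step is to observe that the set $([\alpha] \Rightarrow_\gamma X)$ of partial elements of the family over $\gamma$ still admits a finite-limit description whose shape depends only on $\alpha$, obtained by the same induction as in the proof of the preceding lemma. I would simply thread the parameter $\gamma$ (and its various restrictions along face maps) through the five natural isomorphisms used there; for instance the diagonal-free interval equality becomes $([i = j] \Rightarrow_\gamma X) \cong X(I\backslash\{i\},\gamma[i \mapsto j])$, and the disjunction isomorphism becomes a pullback of the two factor partial-element sets over the conjunction. These are all finite limits of sorts of the essentially algebraic theory $\Th_{\mathbf{cFam}}$ of cubical families.

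Once the decomposition is established, $\ext$ can be presented, one cofibration decomposition at a time, as a partial operation whose domain is carved out by finitely many equations (the boundary agreement conditions on partial elements) and whose codomain is a sort of $\Th_{\mathbf{cFam}}$ subject to the extension equation $\alpha(\id_I) \hra x(\id_I)$. Naturality in $\square(J,I)$ contributes further finitary equations between the instances of $\ext$. Adding these partial operations and equations to $\Th_{\mathbf{cFam}}$ produces the desired $\Th_{\mathbf{cFam}_\ext}$.

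The only genuine obstacle is bookkeeping: each inductive step restricts the ambient cube and its $\gamma$-parameter along a face map, and one has to verify that the resulting finite diagram used to present $([\alpha] \Rightarrow_\gamma X)$ is independent (up to canonical iso) of the chosen decomposition, so that the naturality axioms can be stated as a finite set of equations rather than being indexed by the infinite category of elements of $[\alpha]$. Since the underlying combinatorics is exactly the one already handled for cubical sets, this is routine, and I expect no new ideas beyond those of the preceding lemma.
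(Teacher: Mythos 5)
Your proposal is correct and matches the paper's intent exactly: the paper gives no separate proof for the family case, stating only that ``the argument extends directly,'' and your argument is precisely that direct extension --- threading the base parameter $\gamma$ through the five natural isomorphisms of the preceding lemma and then packaging the resulting finite-limit presentation of partial elements as partial operations and equations over $\Th_{\mathbf{cFam}}$. Nothing further is needed.
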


\begin{cor}\label{cor:finitary_add_ext}
  For every finitary essentially algebraic theory $\Th$ and morphism $F : \Th_{\mathbf{cFam}} \to \Th$,
  there is a finitary essentially algebraic theory $\Th_{\ext_F}$ whose algebras are algebras
  $\CM$ of $\Th$ along with an extension structure on the cubical family $F^\ast(\CM)$.
\end{cor}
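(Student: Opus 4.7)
The plan is to obtain $\Th_{\ext_F}$ as a pushout of finitary essentially algebraic theories. More precisely, writing $G : \Th_{\mathbf{cFam}} \to \Th_{\mathbf{cFam}_\ext}$ for the inclusion given by the preceding corollary, I would form the pushout
\[ \Th_{\ext_F} \triangleq \Th +_{\Th_{\mathbf{cFam}}} \Th_{\mathbf{cFam}_\ext} \]
in the category of finitary essentially algebraic theories. Finitary essentially algebraic theories are closed under pushouts (they can be presented as the models of a Cartesian sketch, and such sketches admit pushouts constructed by taking the disjoint union of sorts, operations and equations modulo the identifications imposed by the span), so this produces a finitary essentially algebraic theory.

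Concretely, I would describe the pushout by a presentation: start with the theory $\Th$; for every operation symbol of $\Th_{\mathbf{cFam}_\ext}$ that is not already in $\Th_{\mathbf{cFam}}$ (\ie the operations and partiality conditions encoding $\ext$ together with their naturality equations, as enumerated via the finite-limit description of $([\alpha] \Rightarrow X)$ in the previous lemma), add a corresponding operation symbol to $\Th_{\ext_F}$ whose sorts and input sorts are obtained by translating the sorts of cubical families through $F$; then impose the corresponding equations translated along $F$. Because $\Th_{\mathbf{cFam}_\ext}$ is already finitary, only finitely many such operation and equation schemes appear (indexed by the shape of the finite-limit presentations), so the resulting theory is again finitary and essentially algebraic.

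To verify that this pushout has the claimed category of algebras, I would use the standard fact that algebras for a pushout of finitary essentially algebraic theories form the pullback of the categories of algebras. Concretely, $\Th_{\ext_F}\text{-algebras}$ are pairs $(\CM, \mathcal{E})$ where $\CM$ is a $\Th$-algebra and $\mathcal{E}$ is a $\Th_{\mathbf{cFam}_\ext}$-algebra, such that their underlying $\Th_{\mathbf{cFam}}$-algebras agree. Since the underlying $\Th_{\mathbf{cFam}}$-algebra of $\CM$ is $F^\ast(\CM)$, and a $\Th_{\mathbf{cFam}_\ext}$-algebra with underlying cubical family $F^\ast(\CM)$ is exactly an extension structure on $F^\ast(\CM)$, this identifies $\Th_{\ext_F}$-algebras with the desired data.

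The only step that requires care is the bookkeeping of the pushout presentation, in particular checking that translating the finitely-many operation and equation schemes of $\Th_{\mathbf{cFam}_\ext}$ along $F$ does not secretly introduce infinitary conditions; this is immediate from finitariness of $\Th_{\mathbf{cFam}_\ext}$ together with the fact that $F$, being a morphism of finitary essentially algebraic theories, sends sorts to finite diagrams of sorts. No deep insight is needed beyond the pushout construction and the previous corollary.
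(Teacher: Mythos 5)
Your proposal is correct and matches the paper's proof exactly: the paper also defines $\Th_{\ext_F}$ as the pushout of the span $\Th_{\mathbf{cFam}_\ext} \la \Th_{\mathbf{cFam}} \ra \Th$, leaving the bookkeeping you spell out implicit. The additional detail you provide (closure of finitary essentially algebraic theories under pushouts and the identification of algebras with the pullback of categories of algebras) is a faithful elaboration of the same one-line argument.
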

\begin{proof}
  The essentially algebraic theory $\Th_{\ext_F}$ is the pushout of $\Th_{\mathbf{cFam}_\ext} \la \Th_{\mathbf{cFam}} \ra \Th$.
\end{proof}


\section{Strict Rezk completions of categories}\label{sec:completion_categories}

In this section, we specify and construct the strict Rezk completions of categories.
Some of the statements of this section may have trivial assumptions,
that is because the theory of categories is a bit too simple:
the category of category has a model structure and every category is cofibrant,
while in general we may want to consider left semi-model structures.
We try to keep the statements and proofs as close to the general case as possible.

\subsection{Categories and their homotopy theory}

We start by giving general definitions that can be interpreted either externally or internally to $\CcSet$.

\begin{defi}
  A \defemph{category} $\CC$ consists of:
  \begin{alignat*}{1}
    & \Ob_\CC : \SSet, \\
    & \Hom_\CC : X \to X \to \SSet, \\
    & \EqHom_\CC : \forall x\ y \to \Hom_\CC(x,y) \to \Hom_\CC(x,y) \to \SSet, \\
    & \id : \forall x \to \Hom_\CC(x,x), \\
    & \_ \circ \_ : \forall x\ y\ z \to \Hom_\CC(y,z) \to \Hom_\CC(x,y) \to \Hom_\CC(x,z), \\
    & \mathsf{idl} : \EqHom_\CC(\id \circ f, f), \\
    & \mathsf{idr} : \EqHom_\CC(\id \circ f, f), \\
    & \mathsf{assoc} : \EqHom_\CC(f \circ (g \circ h), f \circ (g \circ h)), \\
    & \refl : \forall f \to \EqHom_\CC(f,f), \\
    & (p,q : \EqHom_\CC(f,g)) \to (p = q), \\
    & \EqHom_\CC(f,g) \to (f = g).
      \tag*{\qedhere}
  \end{alignat*}
\end{defi}

This presents categories as the algebras of a generalized algebraic theory.
Together, the last rules imply that $\EqHom_\CC(f,g)$ is a proposition equivalent to the equality $(f = g)$.
We could omit the sort $\EqHom$ without changing the definition of category, but that would give a ``wrong'' generalized algebraic theory of categories, \ie one that would not be compatible with the homotopy theory of categories.
The inclusion of $\EqHom$ corresponds to the inclusion of $\{ \bullet \rightrightarrows \bullet \} \to \{ \bullet \rightarrow \bullet \}$ as a generating cofibration in $\CCat$.
It can also be seen as a truncated notion of $2$-cell.
Many definitions need to include conditions for all three sorts, \eg weak equivalences of categories are functors that are essentially surjective on objects, on morphisms (full) and on morphism equalities (faithful).

We may write $x \in \CC$ instead of $x : \Ob_\CC$ and $f \in \CC(x,y)$ instead of $f : \Hom_\CC(x,y)$.

If $\CC$ is a category, we write
\begin{alignat*}{1}
  & \Iso_{\CC}(x,y) \triangleq (f : \Hom_\CC(x,y)) \times (f^{-1} : \Hom_\CC(y,x)) \\
  & \quad \times (f^\eta : \EqHom_\CC(f \circ f^{-1}, \id)) \times (f^{\varepsilon} : \EqHom_\CC(f^{-1} \circ f, \id)).
\end{alignat*}
for the set of isomorphisms between objects $x$ and $y$.

We now recall the main components of the homotopy theory of categories.
\begin{defi}
  A functor $F : \CC \to \CD$ between categories is a \defemph{split weak equivalence} if the following lifting conditions are satisfied:
  \begin{itemize}
  \item For every $x \in \CD$, there is some $x_0 \in X$ and some $p : \Iso_\CD(F(x_0), x)$.
  \item For every $f \in \CD(F(x),F(y))$, there is some $f_0 \in \CC(x,y)$ and some $p : \EqHom_\CD(F(f_0),f)$.
  \item For every $p : \EqHom_\CD(F(f),F(g))$, there is some $p_0 : \EqHom_\CC(f,g)$.
  \end{itemize}
  
  In other words, the split weak equivalences are the functors that are split essentially surjective, full and faithful.
\end{defi}

\begin{prop}
  Split weak equivalences satisfy $2$-out-of-$3$ and are closed under retracts.
  \qed
\end{prop}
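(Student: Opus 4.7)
The plan is to verify both properties by unwinding the definition into its three components (split essential surjectivity, fullness, faithfulness) and checking each separately. Functors preserve isomorphisms and morphism equalities, which I will use implicitly throughout.

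For 2-out-of-3, I split into three cases according to which two of $F$, $G$, $G \circ F$ are assumed to be split weak equivalences. The case where $F$ and $G$ are assumed is the easiest: given $z \in \CE$, pick $y_0 \in \CD$ with iso $G(y_0) \cong z$ via $G$, then $x_0 \in \CC$ with iso $F(x_0) \cong y_0$ via $F$; applying $G$ to the second iso and composing yields $(G \circ F)(x_0) \cong z$. Lifts of morphisms and of morphism equalities propagate analogously by successive lifts through $G$ then $F$.

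The main obstacle is the other two cases. When $F$ and $G \circ F$ are split weak equivalences and we wish to show that $G$ is one, split essential surjectivity of $G$ follows by lifting through $G \circ F$ and then applying $F$. But fullness of $G$ requires lifting a morphism $g : G(y_1) \to G(y_2)$ to $\CD$, where $y_1, y_2$ are not a priori in the image of $F$; I use split essential surjectivity of $F$ to replace $y_1, y_2$ by $F(x_1), F(x_2)$ up to iso, transport $g$ along these isos, lift via fullness of $G \circ F$, and transport back. Faithfulness follows by the same strategy. When instead $G$ and $G \circ F$ are split weak equivalences and we want $F$ to be one, split essential surjectivity of $F$ uses the splitting of $G \circ F$ on $G(y)$ to obtain $x_0 \in \CC$ with iso $p : (G \circ F)(x_0) \cong G(y)$, and then fullness and faithfulness of $G$ reflect $p$ back to an iso $F(x_0) \cong y$ in $\CD$; fullness and faithfulness of $F$ follow by lifting through $G \circ F$ and then reflecting equalities via faithfulness of $G$.

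For closure under retracts, consider $F : \CC \to \CD$ as a retract of $G : \CE \to \CE'$, presented by functors $i : \CC \to \CE$, $r : \CE \to \CC$, $j : \CD \to \CE'$, $s : \CE' \to \CD$ with $r \circ i = \id_\CC$, $s \circ j = \id_\CD$, $G \circ i = j \circ F$ and $F \circ r = s \circ G$. For split essential surjectivity: given $y \in \CD$, apply the splitting of $G$ to $j(y) \in \CE'$ to obtain $e_0 \in \CE$ with iso $G(e_0) \cong j(y)$; applying $s$ and using $s \circ G = F \circ r$ and $s \circ j = \id_\CD$ produces an iso $F(r(e_0)) \cong y$. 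Fullness and faithfulness follow by the same transport-lift-project pattern, moving into $\CE'$ by $j$, lifting through $G$, and projecting back by $s$ (for morphisms) or via $r$ (after tracking through the retract equations). No new obstacle arises here.
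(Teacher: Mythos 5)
Your proof is correct, and it is exactly the standard argument the paper leaves implicit (the proposition is stated with no proof); it also matches, case for case, the detailed proofs the paper does give for the analogous statements about models of HoTT (two-out-of-three via transporting along the isomorphisms supplied by essential surjectivity and lifting through the composite, retracts via the transport--lift--project pattern through the retract equations). The only cosmetic slip is in the retract case, where the lift obtained from $G$ lives in $\CE$ and is projected back by $r$ rather than $s$, but you note this yourself and the argument goes through.
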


\begin{defi}
  A functor $F : \CC \to \CD$ between categories is a \defemph{split trivial fibration} if its actions on objects, morphisms, and morphism equalities are all split surjections:
  \begin{itemize}
  \item For every $x \in \CD$, there is $x_0 \in \CC$ such that $F(x_0) = x$.
  \item For every $f \in \CD(F(x),F(y))$, there is $f_0 \in \CC(x,y)$ such that $F(f_0) = f$.
  \item For every $p : \EqHom_\CD(F(f),F(g))$, there is some $p_0 : \EqHom_\CC(f,g)$ such that $F(p_0) = p$.
  \end{itemize}

  A functor $I : \CA \to \CB$ is an \defemph{algebraic cofibration} if it is equipped with left liftings against all split trivial fibrations.
\end{defi}

\begin{defi}\label{def:cat_fibration}
  A functor $F : \CC \to \CD$ between categories is a \defemph{split fibration} if it satisfies the following lifting condition:
  \begin{itemize}
  \item For every $x \in \CC$ and isomorphism $f : \Iso_\CD(F(x), y)$,
    there is an isomorphism $f_0 : \Iso_\CC(x, y_0)$ such that $F(y_0) = y$ and $F(f_0) = f$.
  \end{itemize}

  A functor $I : \CA \to \CB$ is an \defemph{algebraic trivial cofibration}
  if it is equipped with left liftings against all split fibrations.
\end{defi}

\begin{construction}
  For any category $\CC$, we construct a category $\Path_\CC$, called the \defemph{path-category} of $\CC$, along with projection functors $\pi_1,\pi_2 : \Path_\CC \to \CC$ such that $\pi_1$ and $\pi_2$ are split trivial fibrations and $\angles{\pi_1,\pi_2} : \Path_\CC \to \CC \times \CC$ is a split fibration.
  \begin{itemize}
  \item An object of $\Path_\CC$ is a triple $(x_1,x_2,x_e)$ where $x_e : \Iso_\CC(x_1, x_2)$ is an isomorphism in $\CC$.
  \item A morphism from $(x_1,x_2,x_e)$ to $(y_1,y_2,y_e)$ is a pair $(f_1,f_2)$ where $f_1 : x_1 \to y_1$, $f_2 : x_2 \to y_2$ such that $y_e \circ f_1 = f_2 \circ x_e$.
    \qedhere
  \end{itemize}
\end{construction}

The \defemph{loop-category} $\Loop_\CC$ of a category $\CC$ is the pullback
\[ \begin{tikzcd}
    \Loop_\CC
    \ar[rd, phantom, very near start, "\lrcorner"]
    \ar[r]
    \ar[d, "\pi", two heads]
    &
    \Path_\CC
    \ar[d, "{\angles{\pi_1,\pi_2}}", two heads]
    \\
    \CC
    \ar[r, "\angles{\id,\id}"]
    &
    \CC \times \CC
  \end{tikzcd} \]

\begin{construction}
  We construct a category $\ReflLoop_\CC$, called the \defemph{reflexive-loop-category} of $\CC$, as a displayed category $\ReflLoop_\CC \to \Loop_\CC$, such that $\pi_e : \ReflLoop_\CC \to \Loop_\CC$ is a split fibration and the composition $\pi : \ReflLoop_\CC \to \Loop_\CC \to \CC$ is a split trivial fibration.
  \begin{itemize}
  \item An object of $\ReflLoop_\CC$ displayed over $x_e : \Iso_\CC(x, x)$ is a proof that $x_e = \id$.
  \item There is a unique displayed morphism over every morphism of $\Loop_\CC$.
    \qedhere
  \end{itemize}
\end{construction}

\begin{rem}
  The diagram
  \[ \begin{tikzcd}
      \ReflLoop_\CC
      \ar[r, "\pi_e"]
      & \Path_\CC
      \ar[r, "\pi_1", shift left]
      \ar[r, "\pi_2"', shift right]
      & \CC
    \end{tikzcd} \]
  is a pseudo-reflexive graph object in $\CCat$.
  
  The projection $\pi : \ReflLoop_\CC \to \CC$ is actually an isomorphism.
  In other words, we actually have a reflexive graph object in $\CCat$.
  We try not to rely on this fact, as it won't hold for models of HoTT.
\end{rem}

\begin{prop}
  The constructions of $\Path_\CC$ and $\ReflLoop_\CC$ (and their projection maps) are functorial in $\CC$.
\end{prop}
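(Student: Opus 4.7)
The plan is to construct, for each functor $F : \CC \to \CD$, functors $\Path_F : \Path_\CC \to \Path_\CD$ and $\ReflLoop_F : \ReflLoop_\CC \to \ReflLoop_\CD$, and then to check that these assignments preserve identities and composition in $\CC$ and that the projection maps $\pi_1,\pi_2,\pi_e,\pi$ are natural in $\CC$. The entire proof rests on the fact that functors preserve $\id$, composition, and hence isomorphisms (including all the $\EqHom$ witnesses).

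First I would define $\Path_F$ on objects by $(x_1,x_2,x_e) \mapsto (F(x_1),F(x_2),F(x_e))$, where $F(x_e)$ denotes the componentwise image of the iso data $(f,f^{-1},f^\eta,f^\varepsilon)$; the inverse and triangle equations in $\CD$ follow from applying $F$ to those in $\CC$, using that $\EqHom_\CD(-,-)$ is propositional. On morphisms, a pair $(f_1,f_2)$ with $y_e \circ f_1 = f_2 \circ x_e$ is sent to $(F(f_1),F(f_2))$; the required commutation in $\CD$ is obtained by applying $F$ to the equation in $\CC$. Preservation of $\id$ and of composition in $\Path_\CC$ is immediate since both are defined componentwise from $\CC$. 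Naturality $\pi_i \circ \Path_F = F \circ \pi_i$ for $i=1,2$ holds on the nose, as does $\Path_{G \circ F} = \Path_G \circ \Path_F$ and $\Path_{\id_\CC} = \id_{\Path_\CC}$.

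Next I would define $\ReflLoop_F$ by restricting $\Path_F$ along the pullback defining $\Loop_\CC \to \Path_\CC$, noting that $\Path_F$ commutes with $\angles{\id,\id} : \CC \to \CC \times \CC$ because $F$ does. An object of $\ReflLoop_\CC$ is a witness that some $x_e \in \Iso_\CC(x,x)$ equals $\id$; applying $F$ turns this into a proof that $F(x_e) = F(\id) = \id$, giving a displayed object of $\ReflLoop_\CD$ over the appropriate loop. Since displayed morphisms in $\ReflLoop_\CC$ are unique over a given base morphism, preservation of composition and identity at this level is automatic. Naturality of $\pi_e$ and $\pi$, and the equations $\ReflLoop_{G \circ F} = \ReflLoop_G \circ \ReflLoop_F$ and $\ReflLoop_{\id_\CC} = \id_{\ReflLoop_\CC}$, follow by the same componentwise inspection.

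I do not expect a genuine obstacle here: both $\Path_\CC$ and $\ReflLoop_\CC$ are essentially finite limits and equifiers in $\CCat$ (or, internally, in the category of internal categories in $\CcSet$), and their functoriality together with the naturality of their projections is then a formal consequence. The only point requiring minimal care is that all $\EqHom$-level conditions transport correctly along $F$, which is immediate from propositionality of $\EqHom_\CD$. The argument is the same whether interpreted externally or internally to $\CcSet$, which is what we need for the subsequent sections.
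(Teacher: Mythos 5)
Your proposal is correct, and your closing paragraph is exactly the paper's proof: the paper simply observes that $\Path_{(-)}$ and $\ReflLoop_{(-)}$ are built from finite limits of the components of $\CC$, i.e.\ are induced by morphisms $P, RL : \Th_{\CCat} \to \Th_{\CCat}$ of essentially algebraic theories, so functoriality (and naturality of the projections) is formal. Your explicit componentwise construction of $\Path_F$ and $\ReflLoop_F$ is a correct unfolding of that abstract argument and adds nothing problematic.
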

\begin{proof}
  This follows from the fact that all components of $\Path_\CC$ and $\ReflLoop_\CC$
  are expressed in the language of categories (\eg as finite limits of components of $\CC$).
  More precisely, the functors $\Path_\CC$ and $\ReflLoop_\CC$ are induced by
  morphisms $P,{RL} : \Th_{\CCat} \to \Th_{\CCat}$ of essentially algebraic theories,
  where $\Th_{\CCat}$ is the theory of categories.
\end{proof}

\begin{prop}\label{prop:cat_tcof_is_weq}
  Let $F : \CC \to \CD$ be a functor.
  If $\pi : \ReflLoop_\CC \to \CC$ admits a section $r$ and $F$ is an algebraic trivial cofibration, then $F$ is a split weak equivalence.
\end{prop}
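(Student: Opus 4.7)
The plan is to apply the LLP of $F$ against a single carefully chosen split fibration over $\CD$, and to extract all three conditions defining a split weak equivalence simultaneously from the resulting lift.

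Form the pullback category $\CE \coloneqq \CC \times_\CD \Path_\CD$ taken with respect to $F : \CC \to \CD$ and $\pi_1 : \Path_\CD \to \CD$. Its objects are triples $(x, y, \sigma)$ with $x \in \CC$, $y \in \CD$, and $\sigma : \Iso_\CD(F(x), y)$, and its morphisms are pairs $(\phi : x_1 \to x_2 \text{ in } \CC, \tau : y_1 \to y_2 \text{ in } \CD)$ satisfying $\sigma_2 \circ F(\phi) = \tau \circ \sigma_1$. The projection $p : \CE \to \CD$ given by $(x, y, \sigma) \mapsto y$ (postcomposing the $\Path_\CD$-component of the pullback with $\pi_2$) is a split fibration: for $(x, y, \sigma) \in \CE$ and an iso $h : y \cong y'$ in $\CD$, the iso $(\id_x, h) : (x, y, \sigma) \to (x, y', h \circ \sigma)$ in $\CE$ maps to $h$ under $p$.

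The section $r$ produces a reflexivity functor $r_P : \CC \to \Path_\CC$ as the composite $\CC \xrightarrow{r} \ReflLoop_\CC \xrightarrow{\pi_e} \Loop_\CC \hookrightarrow \Path_\CC$, satisfying $\pi_i \circ r_P = \id_\CC$ for $i \in \{1,2\}$. Composing with the naturally induced $\Path_F : \Path_\CC \to \Path_\CD$, the pullback universal property yields a functor $u : \CC \to \CE$ sending $x \mapsto (x, F(x), \id_{F(x)})$ and satisfying $p \circ u = F$. The LLP of $F$ applied to the commuting square $(u, F, p, \id_\CD)$ produces a functor $\ell : \CD \to \CE$ with $p \circ \ell = \id_\CD$ and $\ell \circ F = u$.

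All three conditions then follow from $\ell$. Essential surjectivity: $\ell(y) = (x_y, y, \sigma_y)$ supplies $x_y \in \CC$ with an iso $\sigma_y : F(x_y) \cong y$. Fullness: for $g : F(x_1) \to F(x_2)$ in $\CD$, the morphism $\ell(g) : u(x_1) \to u(x_2)$ in $\CE$ unfolds, using $u(x_i) = (x_i, F(x_i), \id)$ and the compatibility condition for morphisms in $\CE$, to a pair $(\phi, g)$ satisfying $F(\phi) = g$. Faithfulness: since $\EqHom_\CD(F(g_1), F(g_2))$ is equivalent to the equality $F(g_1) = F(g_2)$, functoriality of $\ell$ gives $u(g_1) = \ell(F(g_1)) = \ell(F(g_2)) = u(g_2)$, which forces $g_1 = g_2$ because the first component of $u$ on morphisms is just $g \mapsto g$. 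The main technical point to verify is that $p$ is indeed a split fibration; the section hypothesis enters the argument solely through the construction of $r_P$, which is precisely why this hypothesis will become essential in the HoTT generalization where reflexivity for path objects is not automatic.
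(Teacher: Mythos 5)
Your proof is correct, and the core construction is the same as the paper's: you build the pullback $\CC \times_\CD \Path_\CD$ (the paper's $\Path_\CD[F\times\id]$), use the section $r$ to produce the map $x \mapsto (x,F(x),\id_{F(x)})$ into it, observe that the projection to $\CD$ is a split fibration, and solve the resulting lifting problem using that $F$ is an algebraic trivial cofibration. The only divergence is in how you conclude: the paper runs the abstract retract argument --- $\pi_1$ on the pullback is a split trivial fibration, so by $2$-out-of-$3$ the map $r'$ is a split weak equivalence, and the lift exhibits $F$ as a retract of $r'$, whence $F$ is a weak equivalence by closure under retracts --- whereas you unwind the lift $\ell$ componentwise and read off essential surjectivity, fullness and faithfulness directly. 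Both endings are valid; the paper's abstract version has the advantage of transferring verbatim to the HoTT case (\autoref{prop:hott_tcof_is_weq} cites ``same proof''), while your explicit extraction would have to be redone there in terms of term lifting rather than objects and morphisms.
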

\begin{proof}
  Write $\Path_\CD[F \times\id]$ for the pullback of $\Path_\CD$ over $F \times\id : \CC \times \CD \to \CD \times \CD$.

  Observe that there is a composite map
  \[ r' : \CC \xrightarrow{r} \ReflLoop_\CC \xrightarrow{\pi_e} \Path_\CC \to \Path_\CD[F \times \id] \]
  such that $\pi_1 \circ r' = \id$ and $\pi_2 \circ r' = F$.

  Since $\pi_1 : \Path_\CD[F \times \id] \to \CC$ is a pullback of $\pi_1 : \Path_\CD \to \CD$, it is a split trivial fibration.
  By $2$-out-of-$3$, the map $r'$ is a split weak equivalence.
  
  The map $\pi_2$, as the composition of split fibrations $\Path_\CD[F \times \id] \to \CC \times \CD$ and $\CC \times \CD \to \CD$, is a split fibration.
  Since $F = \pi_2 \circ r'$ has the left lifting property against $\pi_2$, the retract argument says that the map $F$ is a retract of $r'$.
  
  Since split weak equivalences are closed under retracts, $F$ is a split weak equivalence.
\end{proof}

\subsection{Complete categories}

We now work internally to $\CcSet$.

\begin{defi}
  We say that a category $\CC$ has \defemph{fibrant components} if $\Ob_\CC$, $\Hom_\CC(-)$ and $\EqHom_\CC(-)$ are fibrant.
\end{defi}
Note that other sets such as $\Iso_\CC(x, y)$ are also fibrant when $\CC$ has fibrant components.

\begin{defi}\label{def:cat_complete}
  A category $\CC$ with fibrant components is \defemph{complete} if:
  \begin{itemize}
  \item For every $x \in \CC$, the set $(y \in \CC) \times \Iso_\CC(x, y)$ is contractible.
  \item For every $f \in \CC(x,y)$, the set $(g \in \CC(x,y)) \times \EqHom_\CC(f,g)$ is contractible.
  \item For every $p : \EqHom_\CC(f,g)$, the set $\EqHom_\CC(f,g)$ is contractible.
  \end{itemize}

  The first condition says that $\CC$ is univalent~\parencite{UnivalentCategories}.
  The second and third condition always hold due to the isomorphism $\EqHom_\CC(f,g) \cong (f=g)$, but they morally say that $\Hom_\CC$ is a family of h-sets and that $\EqHom_\CC(-)$ is a family of h-propositions.
\end{defi}

\begin{defi}
  A \defemph{strict Rezk completion} of a global category $\CC$ with fibrant components is a global complete category $\overline{\CC}$ along with a global functor $i : \CC \to \overline{\CC}$ such that the external functor $1^\ast_\square(i) : 1^\ast_\square(\CC) \to 1^\ast_\square(\overline{\CC})$ is a weak equivalence.
\end{defi}

There is a bijective correspondence between
global internal categories
and external cubical categories.
Cubical categories are themselves the algebras of a essentially algebraic theory.
When working externally, we write $\CcCat$ for the category of cubical categories.

\subsection{$\Cof$-fibrant categories}

We still work internally to $\CcSet$.

We now give the candidate definition of the strict Rezk completion.
The strict Rezk completion $\overline{\CC}$ of a category $\CC$ should be defined as the free extension of $\CC$ by some additional structure.
The first candidate would be to freely add completeness as defined in~\autoref{def:cat_complete},
but this is poorly behaved in the absence of fibrancy.
We could freely add both completeness and fibrant replacement,
but proving that the externalization of the inclusion
$i : \CC \to \overline{\CC}$ is a weak equivalence would become very hard.

Instead, following~\autoref{thm:trunc_wo_hcom}, we redefine completeness by using trivial fibrancy instead of contractibility.
\begin{defi}
  A \defemph{$\Cof$-fibrancy structure} on a category $\CC$ consists of:
  \begin{itemize}
  \item For every $x \in \CC$, an extension structure $\ext_\Ob(x)$ on $(y \in \CC) \times \Iso_\CC(x, y)$.
  \item For every $f \in \CC(x,y)$, an extension structure $\ext_\Hom(f)$ on $(g \in \CC(x,y)) \times \EqHom_\CC(f,g)$.
  \item For every $p : \EqHom_\CC(f,g)$, an extension structure $\ext_\EqHom(p)$ on $\EqHom_\CC(f,g)$.
    \qedhere
  \end{itemize}
\end{defi}

\begin{rem}\label{rem:cof_fibration}
  Viewing this structure as a kind of fibrancy structure was suggested to the author by Christian Sattler.
  In general, a functor $F : \CC \to \CD$ is a $\Cof$-fibration if for every lifting problem
  (against a generating trivial cofibration of the canonical model structure on $\CCat$),
  the set of diagonal fillers is trivially fibrant.
  More generally,
  we can parametrize these definitions by a notion of cofibration
  (a monomorphism $\Cof \hra \Omega$).
  In the special case when $\Cof = \{\true,\false\}$,
  having an extension structure is the same as having an element,
  and we recover the notion of split fibration from~\autoref{def:cat_fibration}.

  We also note that a category $\CC$ has fibrant components
  if for every lifting problem against a generating cofibration,
  the set of diagonal fillers is fibrant.
\end{rem}

For the theory of categories, only the component $\ext_\Ob$ actually matters, as shown in the following proposition:
\begin{prop}\label{prop:cat_glue_hom_unique}
  Any category can uniquely be equipped with $\ext_\Hom$ and $\ext_\EqHom$.
\end{prop}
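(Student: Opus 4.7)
The plan is to exploit the fact that, in the generalized algebraic theory of categories, the sort $\EqHom_\CC(f,g)$ is axiomatised to be propositional (any two elements are equal) and strictly equivalent to the equality $(f=g)$. Under these axioms both $\ext_\Hom$ and $\ext_\EqHom$ are essentially forced, so it suffices to exhibit the canonical choices and verify that they satisfy the specification.

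First I would construct $\ext_\EqHom$. Given $p : \EqHom_\CC(f,g)$ together with a partial element $q : [\alpha] \to \EqHom_\CC(f,g)$, I would simply take $\ext_\EqHom(p)(\alpha, q) \triangleq p$. The axiom that $\EqHom_\CC(f,g)$ is a proposition ensures simultaneously that $p$ strictly extends $q$ over $[\alpha]$ and that no other total extension is possible, yielding existence and uniqueness at once.

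Next, for $\ext_\Hom$, given $f \in \CC(x,y)$ and a partial element $t : [\alpha] \to (g : \Hom_\CC(x,y)) \times \EqHom_\CC(f,g)$, the plan is to extend by $(f, \refl(f))$. The essential observation is that the axiom $\EqHom_\CC(f,g) \to (f = g)$, applied to the second projection of $t$, converts the witness over $[\alpha]$ into a strict identity $f = \pi_1(t)$, so the first components agree strictly; the two witnesses $\pi_2(t)$ and $\refl(f)$ in $\EqHom_\CC(f,f)$ then agree by propositionality. Uniqueness is obtained by running the same two-step argument on any other candidate extension.

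I do not anticipate a genuine obstacle. The only subtle point is to notice that it is precisely the strict equality rule $\EqHom_\CC(f,g) \to (f=g)$, rather than a mere propositional identification, that allows these canonical choices to \emph{strictly} extend the prescribed partial data---which is exactly the reason the sort $\EqHom$ was included in the algebraic presentation of $\CCat$ in the first place.
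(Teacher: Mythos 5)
Your proposal is correct and matches the paper's argument, which simply observes that $\EqHom_\CC(f,g)$ and $(g \in \CC(x,y)) \times \EqHom_\CC(f,g)$ are (pointed) propositions — you have merely unfolded this one-line observation into the explicit canonical extensions $p$ and $(f,\refl(f))$ and the forced uniqueness. No gaps.
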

\begin{proof}
  This follows from the fact that
  the sets $(g \in \CC(x,y)) \times \EqHom_\CC(f,g)$ and $\EqHom_\CC(f,g)$
  are actually propositions.
\end{proof}

\begin{prop}\label{prop:cat_glue_complete}
  If a category with fibrant components has a $\Cof$-fibrancy structure, then it is complete.
\end{prop}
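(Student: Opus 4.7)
The plan is to reduce the three contractibility conditions of \autoref{def:cat_complete} directly to the three extension structures provided by the $\Cof$-fibrancy structure, using \autoref{prop:fibrant_contractibility_iff_ext} as the main bridge. That proposition states that for a fibrant set, having an extension structure is logically equivalent to being contractible. So once I verify that each of the three relevant sets is fibrant, the extension structures supplied by the $\Cof$-fibrancy structure will immediately witness their contractibility.

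First I would check fibrancy of each of the three sets. Since $\CC$ has fibrant components, $\Ob_\CC$, $\Hom_\CC(-,-)$ and $\EqHom_\CC(-,-)$ are all fibrant. The set $\Iso_\CC(x,y)$ is built as a $\Sigma$-type whose factors are $\Hom_\CC(x,y)$, $\Hom_\CC(y,x)$, and two $\EqHom_\CC$ sets; since fibrancy is closed under $\Sigma$-types (as used in \textcite{UnifyingCubicalModels}), $\Iso_\CC(x,y)$ is fibrant. The further $\Sigma$-type $(y \in \CC) \times \Iso_\CC(x,y)$ is then a $\Sigma$ over the fibrant base $\Ob_\CC$ with fibrant fibers, hence fibrant. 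The analogous argument shows that $(g \in \CC(x,y)) \times \EqHom_\CC(f,g)$ and $\EqHom_\CC(f,g)$ are fibrant.

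Given fibrancy, the three components $\ext_\Ob$, $\ext_\Hom$ and $\ext_\EqHom$ of the $\Cof$-fibrancy structure are extension structures on fibrant sets. Applying \autoref{prop:fibrant_contractibility_iff_ext} pointwise (at each $x$, each $f$ and each $p$ respectively) yields contractibility of each set, which is exactly the three conditions in \autoref{def:cat_complete}. Hence $\CC$ is complete.

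The only thing one has to be slightly careful about is that fibrancy of the $\Sigma$-types involved really is a closure property available in the present setting; but this is standard in the axiomatization recalled in \autoref{sec:background}, so there is no substantive obstacle. The proof is essentially a one-step unwinding via \autoref{prop:fibrant_contractibility_iff_ext}, with fibrancy of components doing all the nontrivial work.
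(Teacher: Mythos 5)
Your proposal is correct and follows the same route as the paper, whose entire proof is an appeal to \autoref{prop:fibrant_contractibility_iff_ext}: each of the three sets in \autoref{def:cat_complete} carries an extension structure by hypothesis, hence is contractible. Your preliminary fibrancy check via closure of fibrancy under $\Sigma$-types is harmless but redundant, since the forward direction of \autoref{prop:fibrant_contractibility_iff_ext} already yields fibrancy (together with contractibility) of any set equipped with an extension structure.
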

\begin{proof}
  By~\autoref{prop:fibrant_contractibility_iff_ext}.
\end{proof}

We now switch to the external point of view and consider
the external category $\CcCat_\coffib$ of $\Cof$-fibrant cubical categories.
The $\Cof$-fibrant cubical categories are the algebras of an essentially algebraic theory $\Th_{\CcCat_\coffib}$,
where we use~\autoref{cor:finitary_add_ext} to justify that $\Th_{\CcCat_\coffib}$ is finitary.
There is a forgetful functor $R : \CcCat_\coffib \to \CcCat$.
Because it is induced by a morphism of essentially algebraic theories,
it admits a left adjoint $L : \CcCat \to \CcCat_\coffib$.

Recall from~\autoref{ssec:int_tiny} that we have an endo-adjunction $((-)^\MI \dashv (-)_\MI)$
on the external category $\CcCat$ of cubical categories.
\begin{construction}\label{con:cat_lift_exp_i}
  The functor $(-)^\MI$ lifts to $\CcCat_\coffib$ (along the forgetful functor $R$).
\end{construction}
\begin{proof}[Construction]
  Internally, we have to construct, for every global category $\CC$, a global map
  \[ \ext_{\Ob_{\CC^\MI}} : (\alpha : \Cof) (x \in \CC^\MI) ([\alpha] \to (y \in \CC^\MI) \times \Iso_{\CC^\MI}(x,y))
    \to (y \in \CC^\MI) \times \Iso_{\CC^\MI}(x,y). \]
  We pose
  \[ \ext_{\Ob_{\CC^\MI}}(\alpha,x,e) \triangleq (\lambda i \mapsto \ext_{\Ob_\CC}(\alpha, x(i), e(i))). \]
  Checking functoriality is straightforward.
\end{proof}

\begin{construction}\label{con:cat_lift_sqrt_i}
  The functor $(-)_\MI$ lifts to $\CcCat_\coffib$ (along the forgetful functor $R$).
\end{construction}
\begin{proof}[Construction]
  Internally, we have to construct, for every global category $\CC$, a global map
  \[ \ext_{\Ob_{\CC_\MI}} : (\alpha : \Cof) (x \in \CC_\MI) ([\alpha] \to (y \in \CC_\MI) \times \Iso_{\CC_\MI}(x,y))
    \to (y \in \CC_\MI) \times \Iso_{\CC_\MI}(x,y). \]

  By the universal properties of the components of $\CC_\MI$, it suffices to construct a global map
  \[ E_\CC : (\alpha : \Cof^\MI) (x \in (\CC_\MI)^\MI) (e : (i : \MI) \to [\alpha(i)] \to (y \in \CC_\MI) \times \Iso_{\CC_\MI}(x(i),y))
    \to (y \in \CC) \times \Iso_{\CC}(x,y). \]

  The counit of the adjunction at $\CC$ is a functor $\epsilon_\CC : (\CC_\MI)^\MI \to \CC$.
  
  We pose:
  \[ E_\CC(\alpha,x,e) \triangleq \ext_{\Ob_\CC}(\epsilon_\CC(x), [\forall_i\alpha(i) \mapsto \epsilon_\CC(e)]). \]

  We rely on the closure of cofibrations under $\forall_{i:\MI}$.
  Note that the functor evaluation $\epsilon_\CC(e)$ is only valid under $[\forall_i \alpha(i)]$.
  Indeed, we then have $e : (i : \MI) \to (y \in \CC_\MI) \times \Iso_{\CC_\MI}(x(i),y)$.

  We then know that $\varepsilon(\lambda i. \ext_{\Ob_{\CC_\MI}}(\alpha, x(i), e(i))) = E_\CC(\alpha, x, e)$.
  
  Functoriality follows from the naturality of $E_\CC$.
\end{proof}

\begin{prop}
  The endo-adjunction $((-)^\MI \dashv (-)_\MI)$ lifts to an endo-adjunction on the
  external category of $\Cof$-fibrant cubical categories.
\end{prop}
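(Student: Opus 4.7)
The plan is to reduce the statement to showing that the unit $\eta : \id \Rightarrow GF$ and counit $\varepsilon : FG \Rightarrow \id$ of the unlifted adjunction (writing $F = (-)^\MI$ and $G = (-)_\MI$) are themselves morphisms in $\CcCat_\coffib$, once their source and target are equipped with the $\Cof$-fibrancy structures supplied by~\autoref{con:cat_lift_exp_i} and~\autoref{con:cat_lift_sqrt_i}. Since the forgetful functor $R : \CcCat_\coffib \to \CcCat$ is faithful, the triangle identities transfer automatically from $\CcCat$, and the lifted unit and counit then assemble into the required endo-adjunction $\tilde{F} \dashv \tilde{G}$.

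First I would verify preservation of $\ext_{\Ob}$ by the counit $\varepsilon_\CC : (\CC_\MI)^\MI \to \CC$. Unfolding the lifted extension structure on $(\CC_\MI)^\MI$ via~\autoref{con:cat_lift_exp_i}, the required equation reads
\[ \varepsilon_\CC\bigl(\lambda i.\,\ext_{\Ob_{\CC_\MI}}(\alpha,x(i),e(i))\bigr) = \ext_{\Ob_\CC}(\varepsilon_\CC(x),\varepsilon_\CC(e)) \]
for a constant $\alpha : \Cof$. This is exactly the equation displayed at the end of the proof of~\autoref{con:cat_lift_sqrt_i}, instantiated at a constant line of cofibrations (so that $\forall_i\alpha(i) = \alpha$) and composed with the defining formula for $E_\CC$.

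Second I would verify preservation by the unit $\eta_\CC : \CC \to (\CC^\MI)_\MI$. Using the universal property of $(-)_\MI$ from~\autoref{lem:int_tiny}, the preservation condition transposes across the adjunction to a condition on the corresponding map $\CC^\MI \to \CC^\MI$. By the triangle identity on the unlifted adjunction this corresponding map is the identity on $\CC^\MI$, so the condition holds tautologically since the identity preserves the extension structure supplied by~\autoref{con:cat_lift_exp_i}.

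The main obstacle will be organising the bookkeeping around the external character of $(-)_\MI$: one must check carefully that the extension structure on $(\CC^\MI)_\MI$ obtained by chaining the two lifts corresponds, via~\autoref{lem:int_tiny}, to the extension structure on $\CC^\MI$ from~\autoref{con:cat_lift_exp_i}, so that the transposition argument in the unit case really does reduce to the identity on $\CC^\MI$. The other components $\ext_\Hom$ and $\ext_\EqHom$ require no separate verification, because~\autoref{prop:cat_glue_hom_unique} makes them unique and hence automatically preserved by any functor.
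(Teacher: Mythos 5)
Your plan is sound and takes a genuinely different route from the paper. The paper does not lift the unit and counit at all: it fixes a pair of mutually transposed functors $F : \CC^\MI \to \CD$ and $G : \CC \to \CD_\MI$ and proves directly that $F$ preserves $\Cof$-fibrancy if and only if $G$ does, so that the hom-set bijection of the adjunction restricts to $\CcCat_\coffib$; naturality is free because preservation is propositional. Your decomposition into lifting $\eta$ and $\varepsilon$ and invoking faithfulness of $R$ for the triangle identities is a standard and equally valid way to lift an adjunction along a faithful functor, and your counit verification is exactly right: the required equation is the one recorded at the end of~\autoref{con:cat_lift_sqrt_i}, specialised to a constant line of cofibrations.

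The caveat is that the ``bookkeeping'' you defer in the unit case is not peripheral --- it is the entire content of the paper's proof. When you transpose the preservation condition for $\eta_\CC$ across $((-)^\MI \dashv (-)_\MI)$, the single cofibration $\alpha$ becomes a line $\alpha : \Cof^\MI$, and the extension structure on the $(-)_\MI$ side is defined via $\forall_{i:\MI}\,\alpha(i)$. So the transposed condition is not literally ``the identity on $\CC^\MI$ preserves the extension structure of~\autoref{con:cat_lift_exp_i}''; it is a statement quantified over lines of cofibrations, and one must separately show this is equivalent to the constant-cofibration version. The paper does this in its $P_1 \Leftrightarrow P_2$ step, using that a partial element defined under $\alpha(i)$ is also defined under $\forall_i\,\alpha(i)$ in one direction, and $(\forall_i\,\alpha) \lra \alpha$ for constant $\alpha$ in the other, before applying~\autoref{lem:int_tiny} to identify the global proofs. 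Your plan would go through once this step is supplied, but as written it does not anticipate the $\forall_{i:\MI}$ quantification, which is the one place the argument could silently go wrong.
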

\begin{proof}
  We have already lifted the two functors in~\autoref{con:cat_lift_exp_i} and~\autoref{con:cat_lift_sqrt_i},
  it remains to lift the rest of the adjunction.

  Take global categories $\CC, \CD$ with $\Cof$-fibrancy structures,
  and functors $F : \CC^\MI \to \CD$ and $G : \CC \to \CD_\MI$
  that are transposes of each others.

  We have to show that $F$ preserves $\Cof$-fibrancy if and only if $G$ preserves $\Cof$-fibrancy.
  Because preservation of $\Cof$-fibrancy is propositional data,
  naturality will hold automatically.

  The functor $F$ preserving $\Cof$-fibrancy means that there is a global proof
  \begin{alignat*}{1}
    & P_1 : (\alpha : \Cof) (x \in \CC^\MI) (e : (i : \MI) [\alpha] \to (y \in \CC) \times \Iso_\CC(x(i),y)) \\
    & \quad \to F(\lambda i. \ext_{\Ob_\CC}(x(i), [\alpha \mapsto e(i)])) = \ext_{\Ob_\CD}(F(x), [\alpha \mapsto F(e)]).
  \end{alignat*}
  This holds if and only if there is a global proof
  \begin{alignat*}{1}
    & P_2 : (\alpha : \Cof^\MI) (x \in \CC^\MI) (e : (i : \MI) [\alpha(i)] \to (y \in \CC) \times \Iso_\CC(x(i),y)) \\
    & \quad \to F(\lambda i. \ext_{\Ob_\CC}(x(i), [\alpha(i) \mapsto e(i)])) = \ext_{\Ob_\CD}(F(x), [\forall_i \alpha(i) \mapsto F(e)]).
  \end{alignat*}
  Indeed, in the forward direction,
  we can pose
  \[ P_2(\alpha,x,e) \triangleq P_1(\forall_i \alpha(i), x, (\lambda i. e(i))), \]
  where $e(i)$, as a partial element defined under $\alpha(i)$,
  is also defined under $\forall_i \alpha$.
  In the reverse direction,
  we can pose
  \[ P_1(\alpha,x,e) \triangleq P_2((\lambda i. \alpha), x, (\lambda i. \ext_{\Ob_\CC}(x(i), [\alpha \mapsto e(i)]))),
  \]
  since $(\forall_i \alpha) \lra \alpha$.

  Now using the fact that $F = \epsilon_\CD \circ G^\MI$,
  we obtain that $F$ preserves $\Cof$-fibrancy if and only if there is a global proof of:
  \begin{alignat*}{1}
  & (\alpha : \Cof^\MI) (x \in \CC^\MI) (e : (i : \MI) [\alpha(i)] \to (y \in \CC) \times \Iso_\CC(x(i),y)) \\
  & \quad \to \epsilon_\CD(\lambda i. G(\ext_{\Ob_\CC}(x(i), [\alpha(i) \mapsto e(i)])))
    = \ext_{\Ob_\CD}(\epsilon_\CD(\lambda i. G(x(i))), [\forall_i \alpha(i) \mapsto \epsilon_\CD(\lambda i. G(e(i)))]).
  \end{alignat*}

  On the left hand side we have the transpose of $G \circ \ext_{\Ob_\CC}$.
  On the right hand side we recognize the transpose of $\ext_{\Ob_{\CD_\MI}}$ from~\autoref{con:cat_lift_sqrt_i}.
  Therefore, by~\autoref{lem:int_tiny}, the above proposition holds if and only there is a global proof of:
  \begin{alignat*}{1}
    & (\alpha : \Cof) (x \in \CC) (e : [\alpha] \to (y \in \CC) \times \Iso_\CC(x,y)) \\
    & \quad \to G(\ext_{\Ob_\CC}(\alpha, x, e)) = \ext_{\Ob_{\CD_\MI}}(G(x), [\alpha \mapsto G(e)]),
  \end{alignat*}
  \ie if $G$ preserves $\Cof$-fibrancy, as needed.
\end{proof}

\begin{prop}\label{prop:exp_i_preserves_cofib}
  The functor $(-)^\MI : \CcCat \to \CcCat$ preserves algebraic cofibrations.
\end{prop}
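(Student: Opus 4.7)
The plan is to exploit the adjunction $((-)^\MI \dashv (-)_\MI)$ on $\CcCat$: transposing a lifting problem
\[ \begin{tikzcd} \CA^\MI \ar[r] \ar[d, "I^\MI"'] & \CC \ar[d, "P"] \\ \CB^\MI \ar[r] & \CD \end{tikzcd} \]
along the adjunction produces a lifting problem
\[ \begin{tikzcd} \CA \ar[r] \ar[d, "I"'] & \CC_\MI \ar[d, "P_\MI"] \\ \CB \ar[r] & \CD_\MI. \end{tikzcd} \]
Naturality of the adjunction makes diagonal fillers correspond bijectively between the two squares. So if $I$ is an algebraic cofibration and $P_\MI$ is a split trivial fibration whenever $P$ is one, then transposing back the fillers chosen by the lifting structure on $I$ equips $I^\MI$ with an algebraic cofibration structure.

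The key step is therefore to prove that $(-)_\MI : \CcCat \to \CcCat$ preserves split trivial fibrations. By the discussion after~\autoref{lem:int_tiny}, the functor $(-)_\MI$ on cubical categories acts sortwise on algebras, so the actions of $P_\MI$ on objects, morphisms, and morphism equalities are the images under $(-)_\MI : \CcSet \to \CcSet$ of the corresponding actions of $P$. Since split surjections are absolute, \ie preserved by every functor (any section is taken to a section), each of these three components of $P_\MI$ is a split surjection, making $P_\MI$ a split trivial fibration.

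I do not expect a hard step: the whole argument is formal, resting on the adjunction and the absoluteness of sections. The only subtlety worth verifying carefully is that the chosen fillers are natural in the lifting data, but this follows directly from the naturality of the transpose. The same strategy should apply to the analogous statement for models of HoTT later in the paper, once one has an appropriate notion of split trivial fibration in that setting.
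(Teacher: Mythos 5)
Your proposal is correct. The first half coincides with the paper's proof: the paper likewise begins by transposing along $((-)^\MI \dashv (-)_\MI)$ to reduce the claim to showing that $(-)_\MI$ preserves split trivial fibrations. Where you diverge is in how that reduction is discharged. The paper transposes a second time, reducing to the claim that $(-)^\MI$ sends the \emph{generating} cofibrations of $\CcCat$ to algebraic cofibrations, and concludes by observing that the generating cofibrations are functors between categories with discrete components, hence fixed by $(-)^\MI$. You instead argue directly: since $(-)_\MI$ is computed sortwise on algebras (post-composition of left exact functors, as noted after \autoref{lem:int_tiny}), and since split surjections are absolute, each sortwise component of $P_\MI$ inherits a section from $P$. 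This is valid, with one small point worth making explicit: the split-surjection condition on morphisms (and on morphism equalities) is a fiberwise one, \ie it asserts a section of the comparison map $\Hom_\CC \to (P \times P)^\ast \Hom_\CD$ into a pullback, so identifying the corresponding comparison map for $P_\MI$ with the image under $(-)_\MI$ of the one for $P$ uses that $(-)_\MI$, being a right adjoint, preserves these pullbacks. Your route avoids any appeal to a generating set of cofibrations and so is slightly more self-contained; the paper's route is shorter once one has the cofibrantly generated description in hand, and its ``discrete components'' observation is reusable elsewhere. Both adapt equally well to the HoTT case.
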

\begin{proof}
  Using the adjunction $((-)^\MI \dashv (-)_\MI)$, it suffices to prove that $(-)_\MI$ preserves split fibrations,
  Using the adjunction again, it suffices to prove that $(-)^\MI$ sends the generating cofibrations to algebraic cofibrations.
  But the generating cofibrations (in cubical categories) are functors between cubical categories with discrete components,
  which implies that the generating cofibrations are preserved by $(-)^\MI$, as needed.
\end{proof}

\subsection{Fibrancy of the components of the strict Rezk completion}

The strict Rezk completion of a global category $\CC$ will be the $\Cof$-fibrant replacement $\overline{\CC}$ of $\CC$.
We will need to prove that the components of $\overline{\CC}$ are fibrant.
For this purpose, we will use~\autoref{lem:fibrancy_from_reflgraph}.
The pseudo-reflexive graphs will arise from the components of the pseudo-reflexive graph object
  \[ \begin{tikzcd}
      \ReflLoop_{\overline{\CC}}
      \ar[r, "\pi_e"]
      & \Path_{\overline{\CC}}
      \ar[r, "\pi_1", shift left]
      \ar[r, "\pi_2"', shift right]
      & {\overline{\CC}},
    \end{tikzcd} \]
but we also have to check the existence of weak coercion operations and the homotopicality condition.

We now specify notions of weak coercion structures over lines of objects and morphisms in a category $\CC$.
\begin{defi}
  Let $x : \MI \to \Ob_{\CC}$ be a line of objects of a category $\CC$.
  A \defemph{weak coercion structure} on $x$ consists of a family
  \[ \wcoe_x^{r \to s} : \Iso_\CC(x(r), x(s)) \]
  of isomorphisms along with a family
  \[ \wcoh_x^r : \EqHom_{\CC}(\wcoe^{r \to r}_x, \id_{x(r)})
  \]
  of equalities between morphisms.
\end{defi}

\begin{defi}
  Let $f : (i:\MI) \to \Hom_{\CC}(x(i),y(i))$ be a line of morphisms of a category $\CC$.
 
  Given weak coercion structures for $x$ and $y$, a \defemph{weak coercion structure} on $f$ consists of a family
  \[ \wcoe_f^{r \to s} : \EqHom_{\CC}(\wcoe_y^{r \to s} \circ f(r), f(s) \circ \wcoe_x^{r \to s}) \]
  of morphism equalities.
\end{defi}

\begin{construction}
  Given any category $\CC$, we define a displayed category $\HasWCoe^\CC$ over $\CC^\MI$.
  \begin{itemize}
  \item A displayed object over $x : \MI \to \Ob_{\CC}$ is a weak coercion structure $\wcoe_x$ over $x$.
  \item A displayed morphism over $f : (i : \MI) \to \Hom_{\CC}(x(i),y(i))$ is a weak coercion structure $\wcoe_f$ over $f$.
  \item We now define the displayed identity.
    Take a line $x : \MI \to \Ob_{\CC}$ equipped with a weak coercion structure.
    We equip $\id_{x(-)} : (i:\MI) \to \Hom_{\CC}(x(i),x(i))$ with a weak coercion structure:
    \begin{alignat*}{1}
      & \wcoe^{r \to s}_{\id_{x(-)}} : \EqHom_{\CC}(\wcoe^{r \to s}_x \circ \id, \id \circ \wcoe^{r \to s}_x), \\
      & \wcoe^{r \to s}_{\id_{x(-)}} \triangleq \refl.
    \end{alignat*}
  \item We then define the displayed composition.
    Take lines $f : (i:\MI) \to \Hom_{\CC}(y(i),z(i))$ and $g : (i:\MI) \to \Hom_{\CC}(x(i),y(i))$ equipped with weak coercion structures.
    We equip $(f(-) \circ g(-))  : (i:\MI) \to \Hom_{\CC}(x(i),z(i))$ with a weak coercion structure:
    \begin{alignat*}{1}
      & \wcoe^{r \to s}_{f(-) \circ g(-)} : \EqHom_{\CC}(\wcoe^{r \to s}_z \circ f(r) \circ g(r), f(s) \circ g(s) \circ \wcoe^{r \to s}_x).
    \end{alignat*}
    This equality follows from
    \[ \wcoe^{r \to s}_f : \EqHom_\CC(\wcoe^{r \to s}_z \circ f(r), f(s) \circ \wcoe^{r \to s}_y) \]
    and 
    \[ \wcoe^{r \to s}_g : \EqHom_\CC(\wcoe^{r \to s}_y \circ g(r), g(s) \circ \wcoe^{r \to s}_x). \]
  \item
    The interpretations of $\mathsf{idl}$, $\mathsf{idr}$ and $\mathsf{assoc}$ are trivial, any two weak coercion structures over a same morphism are equal.
    \qedhere
  \end{itemize}
\end{construction}

\begin{rem}\label{rem:cat_haswcoe_limit}
  The definition of $\HasWCoe^\CC$ can also be derived from the definitions of $\Path_{-}$ and $\ReflLoop_{-}$.

  For any $r,s : \MI$, we can consider the pullback
  \[ \begin{tikzcd}
      \Path_\CC[\angles{-_r,-_s}]
      \ar[r]
      \ar[rd, phantom, very near start, "\lrcorner"]
      \ar[d]
      &
      \Path_\CC
      \ar[d]
      \\
      \CC^\MI
      \ar[r, "{\angles{-_r,-_s}}"]
      &
      \CC \times \CC
    \end{tikzcd} \]

  For any $r : \MI$, we can then consider the pullback
    \[ \begin{tikzcd}
      \ReflLoop_\CC[\angles{-_r}]
      \ar[r]
      \ar[rd, phantom, very near start, "\lrcorner"]
      \ar[d]
      &
      \ReflLoop_\CC
      \ar[d]
      &
      \\
      \Path_\CC[\angles{-_r,-_r}]
      \ar[rd, phantom, very near start, "\lrcorner"]
      \ar[r]
      \ar[d]
      &
      \Loop_\CC
      \ar[rd, phantom, very near start, "\lrcorner"]
      \ar[r]
      \ar[d]
      &
      \Path_\CC
      \ar[d]
      \\
      \CC^\MI
      \ar[r, "{\angles{-_r}}"]
      &
      \CC
      \ar[r]
      & \CC \times \CC
    \end{tikzcd} \]

  Now consider the diagram shape consisting of objects
  $\mathsf{base}$,
  $\mathsf{path}(r,s)$ for $r,s : \MI$ and
  $\mathsf{refl}\text{-}\mathsf{loop}(r)$ for $r : \MI$,
  such that $\mathsf{base}$ is terminal and with morphisms $\mathsf{refl}\text{-}\mathsf{loop}(r) \to \mathsf{path}(r,r)$ for $r:\MI$.
  Unfolding the constructions shows that $\HasWCoe^\CC$ is the limit of the diagram
  \begin{alignat*}{1}
    & \mathsf{base} \mapsto \CC^\MI, \\
    & \mathsf{path}(r,s) \mapsto \Path_\CC[\angles{-_r,-_s}], \\
    & \mathsf{refl}\text{-}\mathsf{loop}(r) \mapsto \ReflLoop_\CC[\angles{-_r}],
  \end{alignat*}
  with the evident restriction maps.

  In particular, in any situation in which we have path-objects and reflexive-loop-objects,
  we can use this limit as a definition of $\HasWCoe$,
  and obtain notions of weak coercion structures.
\end{rem}

\begin{prop}
  The construction of $\HasWCoe^\CC$ is functorial in $\CC$.
\end{prop}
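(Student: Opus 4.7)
The plan is to exploit the limit description of $\HasWCoe^\CC$ given in the preceding remark, and observe that every piece of that description is already functorial in $\CC$. Concretely, $\CC^\MI$ is functorial in $\CC$ because $(-)^\MI$ is an endofunctor on $\CcCat$; the path-category $\Path_\CC$ and the reflexive-loop category $\ReflLoop_\CC$ together with their projection functors are functorial in $\CC$ by the proposition preceding~\autoref{prop:cat_tcof_is_weq} (they are induced by morphisms $P, {RL} : \Th_{\CCat} \to \Th_{\CCat}$ of essentially algebraic theories). The pullbacks $\Path_\CC[\angles{-_r,-_s}]$ and $\ReflLoop_\CC[\angles{-_r}]$ are then functorial in $\CC$ since pullbacks of natural transformations of functors are themselves functorial, and the chosen pullback square for $\Path_\CC[\angles{-_r,-_s}]$ is the pullback of $\Path_\CC \to \CC \times \CC$ along $\angles{-_r,-_s} : \CC^\MI \to \CC \times \CC$, both sides being natural in $\CC$.

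Next I would observe that the limit diagram described in the remark (with objects $\mathsf{base}$, $\mathsf{path}(r,s)$, $\mathsf{refl}\text{-}\mathsf{loop}(r)$ and restriction morphisms) is fixed independently of $\CC$, so a functor $F : \CC \to \CD$ induces, componentwise, a natural transformation between the two limit diagrams. Since limits are functorial in diagrams, this yields a canonical functor $\HasWCoe^\CC \to \HasWCoe^\CD$ compatible with the projection to $\CC^\MI$.

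Finally, I would verify functoriality laws: $\HasWCoe^{\id_\CC} = \id_{\HasWCoe^\CC}$ and $\HasWCoe^{G \circ F} = \HasWCoe^G \circ \HasWCoe^F$. These follow from the corresponding laws for the component constructions $(-)^\MI$, $\Path_{-}[\angles{-_r,-_s}]$, $\ReflLoop_{-}[\angles{-_r}]$ and from the uniqueness of the induced map into a limit. I do not expect any real obstacle: the main bookkeeping is just checking that the canonical maps between pullbacks/limits compose correctly, which is automatic once one views $\HasWCoe^{(-)}$ as the composition of the (strictly) functorial operations $\CC \mapsto (\CC^\MI,\Path_\CC,\ReflLoop_\CC)$ with the limit over the fixed indexing diagram. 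Alternatively, one can avoid the limit description and verify functoriality directly by setting $\wcoe_{F \circ x}^{r \to s} := F(\wcoe_x^{r \to s})$, $\wcoh_{F \circ x}^r := F(\wcoh_x^r)$, and similarly for the morphism component; preservation of identities, composition and all coherence equations then follows from functoriality of $F$ and from the fact that morphism-level weak coercion data is propositional.
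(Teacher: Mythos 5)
Your proposal is correct and follows the same route as the paper, whose proof is simply the one-line observation that functoriality of $\HasWCoe^\CC$ follows from the functoriality of $\Path_{-}$ and $\ReflLoop_{-}$ (via the limit description of~\autoref{rem:cat_haswcoe_limit}). Your write-up just spells out the bookkeeping that the paper leaves implicit.
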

\begin{proof}
  This follows from the functoriality of $\Path_{-}$ and $\ReflLoop_{-}$.
\end{proof}

\begin{lem}\label{lem:cat_fib_components_map}
  If a category $\CC$ has fibrant components,
  then for any algebraically cofibrant category $\CA$,
  the set $\CCat(\CA,\CC)$ of functors from $\CA$ to $\CC$ is fibrant.
\end{lem}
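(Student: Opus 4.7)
The plan is to invoke~\autoref{lem:fibrancy_from_reflgraph} with trivial base $A = 1$ and fiber $B = \CCat(\CA,\CC)$. I would define the edge family by $E_B(F,G) = \{H \in \CCat(\CA,\Path_\CC) \mid \pi_1 \circ H = F,\ \pi_2 \circ H = G\}$ and the reflexive-loop family analogously using $\ReflLoop_\CC$; both are well-defined because $\ReflLoop_\CC \to \Path_\CC \rightrightarrows \CC$ is a pseudo-reflexive graph object in $\CCat$ and the structural maps are natural.

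For the weak coercion hypothesis on a line $F : \MI \to B$, I would first use that $\CCat(\CA,\CC)^\MI \cong \CCat(\CA,\CC^\MI)$ (since $\CCat(\CA,-)$ preserves limits) to reinterpret $F$ as a functor $\tilde F : \CA \to \CC^\MI$, and then appeal to the limit description of~\autoref{rem:cat_haswcoe_limit}: weak coercion on $F$ amounts to a lift of $\tilde F$ through the projection $\HasWCoe^\CC \to \CC^\MI$. The required section of this projection is constructed internally from the fibrancy of $\Ob_\CC$, $\Hom_\CC$, $\EqHom_\CC$, by transporting the identity isomorphism along each line of objects inside the fibrant family $\Iso_\CC$ and making the analogous construction for morphisms and morphism equalities; algebraic cofibrancy of $\CA$ then supplies the actual lift to a functor $\CA \to \HasWCoe^\CC$.

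For the homotopicality hypothesis I need that $(G, F_e) : (G \in B) \times E_B(F, G)$ is trivially fibrant and analogously for $R_B$. The first total space is the cubical fiber of the post-composition map $\CCat(\CA,\Path_\CC) \to \CCat(\CA,\CC)$ over $F$, and since $\pi_1 : \Path_\CC \to \CC$ is a split trivial fibration, extending a partial element $[\alpha] \to \ldots$ reduces to solving a lifting problem whose left map is the Leibniz product of the algebraic cofibration $\emptyset \to \CA$ with the cubical cofibration $[\alpha] \hookrightarrow 1$. Closure of algebraic cofibrations under this Leibniz product then provides the extension. The reflexive-loop case uses $\pi : \ReflLoop_\CC \to \CC$ identically.

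The main obstacle is verifying this Leibniz-product interaction rigorously — that partial cubical data can genuinely be extended along a split trivial fibration using only algebraic cofibrancy of $\CA$, rather than merely producing a section — and ensuring that the weak coercion step is functorial in $\CA$ as required by the limit description in~\autoref{rem:cat_haswcoe_limit}. Once those points are settled, the application of~\autoref{lem:fibrancy_from_reflgraph} is routine bookkeeping.
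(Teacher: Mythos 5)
Your reduction to \autoref{lem:fibrancy_from_reflgraph} cannot work, because the homotopicality hypothesis of that lemma is genuinely false in the present setting. With $\CA = 1$ your total space $(G : B) \times E_B(F,G)$ is $(y : \Ob_\CC) \times \Iso_\CC(F,y)$, and for a bare category with fibrant components this set is fibrant but in general not trivially fibrant: take $\CC$ the constant cubical category on the free-standing isomorphism $\{a \cong b\}$ and $F = a$; then the set is the two-element discrete cubical set $\{a,b\}$, and the partial element over $(i=0)\vee(i=1)$ picking $a$ at $i=0$ and $b$ at $i=1$ has no extension. Trivial fibrancy of these sets is precisely the $\Cof$-fibrancy structure of \autoref{rem:cof_fibration} that the strict Rezk completion \emph{freely adds}; it is available for $\overline{\CC}$ but not for $\CC$. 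The ``Leibniz product'' step you flag as the main obstacle is where this surfaces: $\pi_1 : \Path_\CC \to \CC$ is a \emph{split} trivial fibration, meaning its actions admit global sections, and that does not let you extend a \emph{partial} lift prescribed on $[\alpha]$ — the map $\CA \times [\alpha] \to \CA$ is not an algebraic cofibration. Your weak coercion step has a further, structural problem: a section of $\HasWCoe^\CC \to \CC^\MI$ is \autoref{prop:cat_haswcoe_tfib}, whose proof in the paper invokes the very lemma you are trying to prove (to extend $K_{r,s}$ off the diagonal cofibration $(r=s)$), so appealing to it here is circular; and the direct construction you sketch, transporting $\id$ inside the fibrant family $\Iso_\CC(x(-),x(-))$, only yields $\wcoe_x^{r \to r}$ \emph{path}-equal to $\id_{x(r)}$, whereas $\wcoh_x^r$ demands the strict equality $\EqHom_\CC(\wcoe_x^{r\to r},\id)$.

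The paper's proof takes a different and more elementary route that uses no completeness data at all. It reformulates the weak composition problem for the set $\CCat(\CA,\CC)$ via \autoref{lem:wcom_as_surj}: it builds two categories $\CE(s)$ and $\CD(s)$ as finite limits (products indexed by $\MI$ and $[\alpha]$, and equalizers) of copies of $\CC$, so that a composition problem for $\CCat(\CA,\CC)$ is a functor $\CA \to \CE(s)$ and a solution is a factorization through $p : \CD(s) \to \CE(s)$. Since limits of categories are computed sortwise, $p$ acts on each sort by the map of \autoref{lem:wcom_as_surj}, which is a split surjection exactly because the corresponding sort of $\CC$ is fibrant; hence $p$ is a split trivial fibration and the algebraic cofibrancy of $\CA$ supplies the factorization. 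If you want to salvage your outline, this is the mechanism to adopt: encode the fibrancy of the mapping set as a lifting problem against a split trivial fibration built from the fibrancy of the components, rather than routing through $\Path_\CC$, $\ReflLoop_\CC$ and \autoref{lem:fibrancy_from_reflgraph}.
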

\begin{proof}
  By~\autoref{lem:int_tiny}, it suffices to construct an element of $\HasWCom(\lambda i \mapsto \CCat(\CA(i),\CC(i)))$ given a line $\CA$ of algebraically cofibrant categories and $\CC$ of categories with fibrant components.

  We define $\CD(s)$ as the equalizer of
  \[
    \begin{tikzcd}
      \CC(r) \times \prod_{s:\MI} \CC(s) \times \prod_{i:\MI} \CC(i)
      \ar[r, shift left, "f_1"]
      \ar[r, shift right, "f_2"']
      & \CC(r)^{[\alpha]} \times \prod_{i:\MI} \CC(i)^{[\alpha]}
    \end{tikzcd}
  \]
  where $f_1(b,w,\underline{w}) \triangleq (w(r), \underline{w})$
  and $f_2(b,w,\underline{w}) \triangleq (b, \lambda \_ \mapsto b)$.
  (More precisely, using combinators for categorical products,
  $f_1 = \angles{ \angles{ \pi_r \circ \pi_2 }_{-:[\alpha]}, \angles{\pi_3}_{-:[\alpha]} }$
  and $f_2 = \angles{ \angles{ \pi_1 }_{-:[\alpha]}, \angles{ \pi_1 }_{i:\MI,-:[\alpha]} }$.)

  We define $\CE(s)$ as the equalizer of
  \[
    \begin{tikzcd}
      \CC(r) \times \prod_{s:\MI} \CC(s)^{[\alpha]}
      \ar[r, shift left, "g_1"]
      \ar[r, shift right, "g_2"']
      & \CC(r)^{[\alpha]}
    \end{tikzcd}
  \]
  where $g_1(b,t) \triangleq t(r)$ and $g_2(b,t) \triangleq b$.
  (More precisely, using combinators for categorical products,
  $g_1 = \angles{ \pi_r \circ \pi_2 }_{-:[\alpha]}$
  and $g_2 = \angles{ \pi_1 }_{-:[\alpha]}$.)
  
  We have a functor $p : \CD(s) \to \CE(s)$,
  determined by the same formula
  \[ p(b,w,\underline{w}) \triangleq (b, \lambda s \mapsto w(s))
  \] as for~\autoref{lem:wcom_as_surj}.
  Since limits and computed sortwise in categories, the actions of $p$ on objects, morphisms and morphism equalities are given by the map of~\autoref{lem:wcom_as_surj}.
  Thus, since $\CC(s)$ has fibrant components, \autoref{lem:wcom_as_surj} implies that the actions of $p$ on each sort are split surjections, \ie that the map $p : \CD(s) \to \CE(s)$ is a split trivial fibration.

  Now take $t : [\alpha] \to (s : \MI) \to \CCat(\CA(s),\CC(s))$ and $b : \CCat(\CA(r),\CC(r))$ such that $[\alpha] \to t(r) = b$.
  We have a map $\angles{b,t} : \CA(s) \to \CE(s)$.
  Since $\CA(s)$ is algebraically cofibrant, this map factors through $p$.
  This factor can be decomposed into $\wcom_{\CCat(\CA(-),\CC(-))}^{r \to s}(t,b)$ and $\underline{\wcom}_{\CCat(\CA(-),\CC(-))}^{r}(t,b)$, as needed.
\end{proof}

\begin{prop}\label{prop:cat_haswcoe_tfib}
  If a category $\CC$ has fibrant components, then the projection $\HasWCoe^\CC \to \CC^\MI$ is a split trivial fibration.
\end{prop}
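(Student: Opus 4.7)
The plan is to verify the split trivial fibration property by exhibiting split surjections on each of the three sorts (objects, morphisms, morphism equalities), working internally to $\CcSet$ and exploiting the limit description of $\HasWCoe^\CC$ from Remark~\ref{rem:cat_haswcoe_limit} together with the fibrancy of the components of $\CC$.

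The main new construction is on objects. Given a line $x : \MI \to \Ob_\CC$, I first use the fibrancy of $\Iso_\CC$ to define a provisional weak coercion
\[
  w^{r \to s} := \wcom^{r \to s}_{s'.\,\Iso_\CC(x(r), x(s'))}([],\ \id_{x(r)}),
\]
together with the coherence path $p^r := \underline{\wcom}^{r}_{s'.\,\Iso_\CC(x(r),x(s'))}([], \id_{x(r)})$ witnessing $w^{r \to r} \sim \id_{x(r)}$. Because $\wcoh^r$ is required to be a strict equality through $\EqHom$, and $p^r$ gives only a path, I promote $w$ to a strict version by a second $\wcom$ using the diagonal cofibration $(r = s)$:
\[
  \wcoe^{r \to s} := \wcom^{0 \to 1}_{i.\,\Iso_\CC(x(r), x(s))}\bigl([(r = s) \mapsto (\lambda i.\, p^r(i))],\ w^{r \to s}\bigr).
\]
Under $(r = s)$ the line $\Iso_\CC(x(r), x(s))$ coincides with $\Iso_\CC(x(r), x(r))$, so the partial element $p^r$ is well-typed and agrees with $w^{r \to s}$ at $i = 0$ (since $p^r(0) = w^{r \to r} = w^{r \to s}$ under $r = s$). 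The defining equation for $\wcom^{0 \to 1}$ then forces $\wcoe^{r \to r} = p^r(1) = \id_{x(r)}$ as a strict equality, so I can take $\wcoh^r := \refl$.

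For the morphism sort, given a line $f : (i : \MI) \to \Hom_\CC(x(i), y(i))$ whose endpoints carry weak coercion structures $\wcoe_x, \wcoe_y$, I need a family of $\EqHom$ witnesses for the naturality $\wcoe_y^{r \to s} \circ f(r) = f(s) \circ \wcoe_x^{r \to s}$ in the fibrant set $\Hom_\CC(x(r), y(s))$. An analogous two-stage $\wcom$ construction in the fibrant line of morphism sets, combined with the same diagonal cofibration trick on $(r = s)$, promotes the path-level coherences produced by $\wcom$ in $\Hom_\CC$ to the strict equalities demanded by $\EqHom$. On morphism equalities, no further work is needed, as $\EqHom$ is propositional so any element of it in the base admits at most one lift, which exists here automatically. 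Functoriality of the sections on morphisms and equalities follows from this propositional uniqueness.

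The main technical subtlety is the careful coordination of the diagonal cofibration with the two-stage $\wcom$ to turn the path-level coherences provided by the fibrant components into the strict equalities demanded by $\EqHom$. This is precisely the step at which diagonal cofibrations become indispensable, as flagged at the start of Section~\ref{sec:background}; without them, fibrancy of $\Iso_\CC$ and $\Hom_\CC$ would give the coercions and naturality witnesses only up to paths, not up to the strict equalities encoded by $\EqHom$.
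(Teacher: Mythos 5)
Your sortwise object-level construction is correct: the two-stage $\wcom$ in the fibrant line $s' \mapsto \Iso_\CC(x(r),x(s'))$, followed by the correction along the diagonal cofibration $(r=s)$ so that $\wcoe^{r\to r}_x$ becomes $\id_{x(r)}$ on the nose, is a valid and more hands-on alternative to the paper's argument, which instead works with the limit description of $\HasWCoe^\CC$, obtains the reflexive-loop data by lifting through the split trivial fibration $\ReflLoop_\CC \to \CC$, and extends from the diagonal using the fibrancy of functor spaces (Lemma~\ref{lem:cat_fib_components_map}).

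The morphism case, however, has a genuine gap. A displayed morphism of $\HasWCoe^\CC$ over a line $f$ is an element of $\EqHom_\CC(\wcoe_y^{r\to s}\circ f(r),\, f(s)\circ\wcoe_x^{r\to s})$, that is, a strict equality between two morphisms that are already completely determined by $f$ and by the given coercion structures $\wcoe_x,\wcoe_y$ (which are arbitrary objects of $\HasWCoe^\CC$, not necessarily the ones you constructed). There is nothing left to adjust, so the diagonal-cofibration trick has no free element to act on, and a $\wcom$ in the line of $\Hom$-sets can only yield a \emph{path} between the two sides; a path between two fixed elements cannot be ``promoted'' to a strict equality. The correct move is to use the fibrancy of the $\EqHom$-component itself, which is part of the hypothesis that $\CC$ has fibrant components: the line $s' \mapsto \EqHom_\CC(\wcoe_y^{r\to s'}\circ f(r),\, f(s')\circ\wcoe_x^{r\to s'})$ is a line of fibrant sets, and at $s'=r$ it is inhabited by $\refl$, because $\wcoh^r_x$ and $\wcoh^r_y$ together with the unit laws identify both sides with $f(r)$; a single $\wcom^{r\to s}$ with the empty cofibration then transports this element to an arbitrary $s$. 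This is exactly where fibrancy of $\EqHom_\CC$, and not merely of $\Hom_\CC$, is used. With that repair your sortwise argument goes through, and the equality sort is indeed trivial since weak coercion structures over a morphism are propositional.
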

\begin{proof}
  Note that this amounts to checking the following conditions:
  \begin{itemize}
  \item For every line $x : \MI \to \Ob_\CC$, there is a weak coercion structure over $x$.
  \item For every line $f : (i : \MI) \to \Hom_\CC(x(i),y(i))$, and given weak coercion structures over $x$ and $y$, there is a weak coercion structure over $f$.
  \item There is also a condition for equalities between morphisms, but it is trivial since morphism equalities are trivial in $\HasWCoe^\CC$.
  \end{itemize}

  We prove it more abstractly using the definition of $\HasWCoe$ as a limit.
  Let $I : \CA \to \CB$ be a (generating) trivial cofibration and take a lifting problem
  \[ \begin{tikzcd}
      \CA
      \ar[r, "F"]
      \ar[d, "I"]
      &
      \HasWCoe^\CC
      \ar[d]
      \\
      \CB
      \ar[r, "G"]
      &
      \CC^\MI \rlap{\ .}
    \end{tikzcd} \]

  We construct a diagonal lift $\CB \to \HasWCoe^\CC$ using the universal property of $\HasWCoe^\CC$ as a limit.
  This means that we have to construct diagonal lifts $K_{r,s} : \CB \to \Path_\CC[\angles{-_r,-_s}]$ for any $r,s:\MI$ and $L_r : \CB \to \ReflLoop_\CC[\angles{-_r}]$ for any $r:\MI$ such that $\pi_e \circ L_r = K_{r,r}$.
  Since $\CC$ has fibrant components, $\Path_\CC[\angles{-_r,-_s}]$ also has fibrant components.
  Thus, by~\autoref{lem:cat_fib_components_map}, it suffices to define $K_{r,s}$ under the diagonal cofibration $(r = s)$.
  Since $\pi : \ReflLoop_\CC \to \CC$ is a split trivial fibration, so is its pullback $\pi : \ReflLoop_\CC[\angles{-_r}] \to \CC^\MI$.
  This provides the lifts $L_r$, and we can pose $K_{r,r} = \pi_e \circ L_r$.
\end{proof}

Now assume that $\CC$ be a global algebraically cofibrant category with fibrant components.
\begin{construction}\label{constr:strict_rezk_cat}
  We write $\overline{\CC}$ for the $\Cof$-fibrant replacement of $\CC$, \ie the category freely generated by a functor $i : \CC \to \overline{\CC}$ and a $\Cof$-fibrancy structure.
\end{construction}
We will have to prove two things: the fibrancy of the components of $\overline{\CC}$, and the fact that $1^\ast_\square(i) : 1^\ast_\square(\CC) \to 1^\ast_\square(\overline{\CC})$ is an external split weak equivalence of categories.

\begin{lem}\label{lem:cat_coffib_commutes_expi}
  The map $i^\MI : \CC^\MI \to \overline{\CC}^\MI$ exhibits $\overline{\CC}^\MI$ as a $\Cof$-fibrant replacement of $\CC^\MI$.
\end{lem}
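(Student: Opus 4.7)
The plan is to verify the universal property of the $\Cof$-fibrant replacement for $\overline{\CC}^\MI$ by assembling the two adjunctions already in hand: the free/forgetful adjunction $(L \dashv R)$ between $\CcCat$ and $\CcCat_\coffib$, and the lifted endo-adjunction $((-)^\MI \dashv (-)_\MI)$ on $\CcCat_\coffib$ established just above. I also need one additional bookkeeping fact, namely that $R$ commutes with $(-)^\MI$ and with $(-)_\MI$; this is immediate from~\autoref{con:cat_lift_exp_i} and~\autoref{con:cat_lift_sqrt_i}, since those liftings were defined sortwise on top of the cubical structure preserved by the forgetful functor.

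Concretely, given any $\Cof$-fibrant cubical category $\CD$, I would compose the natural isomorphisms
\[
  \CcCat_\coffib(\overline{\CC}^\MI, \CD)
  \cong \CcCat_\coffib(\overline{\CC}, \CD_\MI)
  \cong \CcCat(\CC, R(\CD_\MI))
  \cong \CcCat(\CC, R(\CD)_\MI)
  \cong \CcCat(\CC^\MI, R(\CD)),
\]
using in order: the lifted $((-)^\MI \dashv (-)_\MI)$ on $\CcCat_\coffib$, the adjunction $(L \dashv R)$ (with $\overline{\CC} = L(\CC)$), the fact that $R$ commutes with $(-)_\MI$, and the endo-adjunction on $\CcCat$. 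This exhibits $\overline{\CC}^\MI$ as a value of the left adjoint $L$ at $\CC^\MI$.

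It remains to identify the unit of this representation with $i^\MI$. To do so I would chase the identity $\id_{\overline{\CC}^\MI}$ through the chain above: its image in $\CcCat(\CC^\MI, R(\overline{\CC}^\MI)) \cong \CcCat(\CC^\MI, R(\overline{\CC})^\MI)$ is the transpose of $R$ applied to the transpose of $\id$, which by naturality of the transpose in the source is $R(i)^\MI = (R i)^\MI$; under the identification $R(\overline{\CC}^\MI) = R(\overline{\CC})^\MI$ this is exactly $i^\MI$.

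I do not expect a genuine obstacle here: the content of the lemma is bookkeeping about how the two adjunctions interact, and everything needed has already been assembled. The one point that requires mild care is verifying that the isomorphism $R(\CD_\MI) \cong R(\CD)_\MI$ is induced by the liftings of~\autoref{con:cat_lift_exp_i} and~\autoref{con:cat_lift_sqrt_i} in a way that makes the chain of bijections above natural in $\CD$; this is automatic from the construction, which extends $(-)^\MI$ and $(-)_\MI$ sortwise without altering the underlying cubical category.
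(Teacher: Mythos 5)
Your proof is correct and uses exactly the same ingredients as the paper's: the adjunction $(L \dashv R)$, the lifted endo-adjunction $((-)^\MI \dashv (-)_\MI)$ on $\CcCat_\coffib$, and the fact that $R$ commutes strictly with both functors and the adjunction transposes. The paper packages this slightly more slickly by observing that $R$ being a morphism of adjunctions induces an adjunction between comma categories $(\CC \downarrow R)$ and $(\CC^\MI \downarrow R)$, whose left adjoint sends $(\CD, f)$ to $(\CD^\MI, f^\MI)$; then ``left adjoints preserve initial objects'' yields both the identification $L(\CC)^\MI \cong L(\CC^\MI)$ and the unit identification $\eta_\CC^\MI \cong \eta_{\CC^\MI}$ in one stroke, subsuming the separate chase of $\id_{\overline{\CC}^\MI}$ that your proof has to carry out. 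These are two presentations of the same argument; yours is a bit more explicit about the bookkeeping, the paper's a bit more abstract, but neither contains anything the other does not.
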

\begin{proof}
  We work externally.
  There is an adjunction $(L \dashv R)$ between the categories $\CcCat$ and $\CcCat_\coffib$.
  We also have an endo-adjunction $((-)^\MI \dashv (-)_\MI)$ on $\CcCat$ which lifts (along $R$) to $\CcCat_\coffib$.
  We know that $R$ commutes strictly with all components of the adjunctions $((-)^\MI \dashv (-)_\MI)$;
  in other words, $R$ determines a morphism
  from the adjunction $((-)^\MI \dashv (-)_\MI)$ on $\CcCat_\coffib$ to
  the adjunction $((-)^\MI \dashv (-)_\MI)$ on $\CcCat$.

  In such a situation, we obtain, for every $\CC : \CcCat$, an adjunction
  between the comma categories $(\CC \downarrow R)$ and $(\CC^\MI \downarrow R)$.
  The left adjoint
  sends $(\CD : \CcCat_\coffib, f : \CC \to R(\CD))$
  to $(\CD^\MI, f^\MI)$,
  relying on the fact that $R(\CD^\MI) = R(\CD)^\MI$.
  The right adjoint sends $(\CD : \CcCat_\coffib, g : \CC^\MI \to R(\CD))$
  to $(\CD_\MI, g_\MI \circ \varepsilon_{\CC}) : \CC \to R(\CD_\MI)$.

  This left adjoint preserves initial object,
  which tells us that $L(\CC)^\MI \cong L(\CC^\MI)$ and $\eta_{\CC}^\MI \cong \eta_{\CC^\MI}$,
  where $\eta$ is the unit of the adjunction $(L \dashv R)$.
\end{proof}

\begin{prop}\label{prop:cat_haswcoe_glue}
  The displayed category $\HasWCoe^{\overline{\CC}}$ can be equipped with a displayed $\Cof$-fibrancy structure.
\end{prop}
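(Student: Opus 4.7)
The plan is to build the required displayed extension structures on $\HasWCoe^{\overline{\CC}}$ from the $\Cof$-fibrancy structure on $\overline{\CC}$, which by~\autoref{lem:cat_coffib_commutes_expi} transfers to a $\Cof$-fibrancy structure on $\overline{\CC}^\MI$.

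First, I would use the limit presentation of $\HasWCoe^\CC$ given in~\autoref{rem:cat_haswcoe_limit} to reduce to a componentwise verification. Since extension structures are themselves a right-adjoint-like algebraic gadget whose data is preserved under limits of sets, and since the displayed $\Cof$-fibrancy structure on the total category of a limit of displayed categories is induced from structures on the pieces, it suffices to equip each of $\Path_{\overline{\CC}}[\angles{-_r,-_s}]$ (for $r,s : \MI$) and $\ReflLoop_{\overline{\CC}}[\angles{-_r}]$ (for $r : \MI$) with a displayed $\Cof$-fibrancy structure over $\overline{\CC}^\MI$, compatibly with the restriction morphisms $\ReflLoop_{\overline{\CC}}[\angles{-_r}] \to \Path_{\overline{\CC}}[\angles{-_r,-_r}] \to \overline{\CC}^\MI$.

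Second, for $\Path_{\overline{\CC}}[\angles{-_r,-_s}]$, a displayed object over $x : \overline{\CC}^\MI$ is an isomorphism $x_e : \Iso_{\overline{\CC}}(x_r,x_s)$, and a displayed morphism over an iso $f : \Iso_{\overline{\CC}^\MI}(x,y)$ is an iso $y_e$ with the square $y_e \circ f_r = f_s \circ x_e$. Given a displayed object $x_e$ together with partial extension data over $[\alpha]$, I would first extend $(y,f)$ along the base using $\ext_\Ob$ of $\overline{\CC}^\MI$; since the square condition determines $y_e$ up to the canonical choice $f_s \circ x_e \circ f_r^{-1}$, a total $y_e$ extending the partial one is then obtained by applying $\ext_\Hom$ of $\overline{\CC}$. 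For $\ReflLoop_{\overline{\CC}}[\angles{-_r}]$ the displayed fibres are propositional (a proof $x_e = \id$), so extensions are furnished directly by $\ext_\EqHom$, whose existence and essential uniqueness is guaranteed by~\autoref{prop:cat_glue_hom_unique}.

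The main obstacle is coherence: the extensions built on the different components must be natural with respect to the restriction morphisms of the limit diagram, so that they assemble into a well-defined structure on $\HasWCoe^{\overline{\CC}}$. This will follow from~\autoref{prop:cat_glue_hom_unique}, which says that the $\ext_\Hom$ and $\ext_\EqHom$ pieces are essentially unique, together with the fact that the object-level extensions at the various pullbacks all descend from a single application of $\ext_\Ob$ at the underlying line of objects in $\overline{\CC}^\MI$. Once this coherence is in hand, the universal property of the limit yields the desired displayed $\Cof$-fibrancy structure on $\HasWCoe^{\overline{\CC}}$.
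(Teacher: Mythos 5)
Your construction is essentially the paper's: the heart of both is to transport the weak coercion structure of $x$ along the glued isomorphism $g$ by conjugation, $\wcoe^{r\to s}_{G} = g(s)\circ\wcoe^{r\to s}_x\circ g(r)^{-1}$, and to verify the restriction to the partial data under $\alpha$ by uniqueness of the solution of the commuting square (the paper phrases this check as following from $\wcoe_e$). Routing the verification through the limit presentation of \autoref{rem:cat_haswcoe_limit} rather than listing the components of a weak coercion structure directly is a repackaging, not a different argument; also note that the $\Cof$-fibrancy structure on $\overline{\CC}^\MI$ you start from is the one of \autoref{con:cat_lift_exp_i}, not of \autoref{lem:cat_coffib_commutes_expi}.

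One step is glossed over incorrectly: the $\ReflLoop_{\overline{\CC}}[\angles{-_r}]$ component. Propositionality of its fibres and \autoref{prop:cat_glue_hom_unique} give you \emph{uniqueness} of the displayed datum, not its \emph{existence}: you must still prove that the extended loop $\wcoe^{r\to r}_{G} = g(r)\circ\wcoe^{r\to r}_x\circ g(r)^{-1}$ equals $\id$, and this requires the reflexivity datum $\wcoh^{r}_x : \EqHom_{\overline{\CC}}(\wcoe^{r\to r}_x,\id)$ of the given displayed object. An extension structure such as $\ext_\EqHom$ cannot produce a proof of an equation from an empty partial input (take $\alpha=\bot$), so the appeal to propositionality does not discharge this obligation. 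The missing computation is one line, but it is the actual content of that component of the structure.
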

\begin{proof} We interpret the operations of a $\Cof$-fibrancy structure.
  By~\autoref{prop:cat_glue_hom_unique}, we only have to interpret $\ext_\Ob$.
  \begin{description}
  \item[Interpretation of $\ext_\Ob$]
    Take a cofibration $\alpha$ and lines $x : \MI \to \overline{\CC}$, $y : (i : \MI) \to [\alpha] \to \Ob_{\overline{\CC}}$ and $e : (i : \MI) \to [\alpha] \to \Iso_{\overline{\CC}}(x(i), y(i))$.
    We need to define displayed object (weak coercion structures) over the lines $G(-) = \Glue_\Ob(x(-),y(-),e(-))$ (of objects) and $g(-) = \glue_\Ob(x(-),y(-),e(-))$ (of isomorphisms).
    A weak coercion structure for the isomorphism $g$ consists of weak coercion structures for both morphisms $g$ and $g^{-1}$.
    The coercion structures should also coincide with $\wcoe_y$ and $\wcoe_e$ under the cofibration $\alpha$.

    We pose
    \begin{alignat*}{1}
      & \wcoe^{r \to s}_{G} : \Iso_{\overline{\CC}}(G(r), G(s)), \\
      & \wcoe^{r \to s}_{G} \triangleq {g(s)} \circ \wcoe^{r \to s}_x \circ {g(r)}^{-1}, \\
      & \wcoh^{r}_{G} : \EqHom_{\overline{\CC}}(\wcoe^{r \to r}_{G}, \id), \\
      & \wcoe^{r \to s}_{g} : \EqHom_{\overline{\CC}}(\wcoe^{r \to s}_{G} \circ g(r), g(s) \circ \wcoe_x^{r \to s}), \\
      & \wcoe^{r \to s}_{g^{-1}} : \EqHom_{\overline{\CC}}(\wcoe^{r \to s}_{x} \circ g(r)^{-1}, g(s)^{-1} \circ \wcoe_G^{r \to s}).
    \end{alignat*}
    The equality $\wcoh^{r}_G$ follows from $\wcoh^r_x$, which says that $\wcoe^{r \to r}_x = \id_x$.
    The equalities $\wcoe^{r \to s}_g$ and $\wcoe^{r \to s}_{g^{-1}}$ follow from the definition of $\wcoe^{r \to s}_G$ and the categorical laws.

    We then have to check that under $\alpha$, these restrict to $\wcoe_y$, $\wcoh_y$, $\wcoe_e$ and $\wcoe_{e^{-1}}$.
    We already know that $G$ and $g$ restrict to $y$ and $e$.
    Only the case of $\wcoe_y$ is non-trivial: it follows from the equality $\wcoe_e$ between $\wcoe^{r \to s}_y \circ e(r)$ and $e(s) \circ \wcoe^{r \to s}_x$.
    \qedhere
  \end{description}
\end{proof}

\begin{prop}\label{prop:cat_sect_haswcoe}
  The displayed category $\HasWCoe^{\overline{\CC}} \to \overline{\CC}^\MI$ admits a global section.
\end{prop}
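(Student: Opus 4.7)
The plan is to obtain the section by combining the three preceding results: the $\Cof$-fibrancy of $\HasWCoe^{\overline{\CC}}$ over $\overline{\CC}^\MI$ (\autoref{prop:cat_haswcoe_glue}), the fact that $i^\MI : \CC^\MI \to \overline{\CC}^\MI$ is a $\Cof$-fibrant replacement (\autoref{lem:cat_coffib_commutes_expi}), and the split triviality of $\HasWCoe^\CC \to \CC^\MI$ for $\CC$ with fibrant components (\autoref{prop:cat_haswcoe_tfib}). The idea is to first construct the section over $\CC^\MI$ where trivial fibrancy gives it for free, then transport it to $\overline{\CC}^\MI$ via the universal property of $\Cof$-fibrant replacement.

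First I would build a functor $s_0 : \CC^\MI \to \HasWCoe^{\overline{\CC}}$ lying over $i^\MI$. By~\autoref{prop:cat_haswcoe_tfib}, the projection $\HasWCoe^\CC \to \CC^\MI$ is a split trivial fibration, and since the identity on $\CC^\MI$ is an algebraic cofibration, we obtain a section $\sigma : \CC^\MI \to \HasWCoe^\CC$. The functoriality of the construction $\HasWCoe^{(-)}$ applied to $i : \CC \to \overline{\CC}$ yields a map $\HasWCoe^\CC \to \HasWCoe^{\overline{\CC}}$ commuting with the projections to $\CC^\MI$ and $\overline{\CC}^\MI$ (via $i^\MI$). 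Composing, I set $s_0 \triangleq (\HasWCoe^\CC \to \HasWCoe^{\overline{\CC}}) \circ \sigma$, which by construction satisfies $\pi \circ s_0 = i^\MI$.

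Next I would invoke the universal property. Since $\HasWCoe^{\overline{\CC}}$ is $\Cof$-fibrant as a total category (the displayed $\Cof$-fibrancy structure from~\autoref{prop:cat_haswcoe_glue} combined with the $\Cof$-fibrancy of $\overline{\CC}^\MI$ given by~\autoref{lem:cat_coffib_commutes_expi}), and $\overline{\CC}^\MI$ is freely $\Cof$-fibrant on $\CC^\MI$ via $i^\MI$, the functor $s_0$ extends uniquely to a $\Cof$-fibrancy-preserving functor $s : \overline{\CC}^\MI \to \HasWCoe^{\overline{\CC}}$ with $s \circ i^\MI = s_0$.

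Finally, to check that $s$ is a section of the projection $\pi : \HasWCoe^{\overline{\CC}} \to \overline{\CC}^\MI$, note that both $\pi \circ s$ and $\id_{\overline{\CC}^\MI}$ are $\Cof$-fibrancy-preserving endofunctors of $\overline{\CC}^\MI$ whose restrictions along $i^\MI$ agree: $\pi \circ s \circ i^\MI = \pi \circ s_0 = i^\MI = \id \circ i^\MI$. By the uniqueness clause of the universal property of $\overline{\CC}^\MI$ as $\Cof$-fibrant replacement, $\pi \circ s = \id$. The only subtlety worth checking carefully is the first step, since one must verify that the functoriality of $\HasWCoe^{(-)}$ in $\CC$ genuinely lies over the expected map $(-)^\MI$ on bases; this reduces to the description of $\HasWCoe^\CC$ as a finite limit of components functorial in $\CC$, as noted in~\autoref{rem:cat_haswcoe_limit}.
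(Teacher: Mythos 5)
Your proof follows the same route as the paper's: obtain a section of $\HasWCoe^{\CC} \to \CC^\MI$, push it forward along the functorial map $\HasWCoe^{\CC} \to \HasWCoe^{\overline{\CC}}$ to get a map over $i^\MI$, and then extend along the $\Cof$-fibrant replacement $i^\MI : \CC^\MI \to \overline{\CC}^\MI$ using the displayed $\Cof$-fibrancy structure of~\autoref{prop:cat_haswcoe_glue} and~\autoref{lem:cat_coffib_commutes_expi}. Your final uniqueness argument establishing $\pi \circ s = \id$ is a reasonable way of making explicit what the paper delegates to the (displayed) universal property.

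The one flawed step is your justification for the existence of the section $\sigma$ of $\HasWCoe^{\CC} \to \CC^\MI$. A split trivial fibration over a category $\CX$ admits a section when $\CX$ is algebraically cofibrant, i.e.\ when the functor from the initial category into $\CX$ is an algebraic cofibration: the relevant lifting problem has $\emptyset \to \CC^\MI$ on the left, not $\id_{\CC^\MI}$, and lifting against an identity functor is vacuous and produces nothing. So you must actually establish that $\CC^\MI$ is algebraically cofibrant. This is not automatic from the standing assumption that $\CC$ is algebraically cofibrant; it is exactly what~\autoref{prop:exp_i_preserves_cofib} supplies, namely that $(-)^\MI$ preserves algebraic cofibrations. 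With that citation substituted for your appeal to the identity being a cofibration, the argument matches the paper's and goes through.
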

\begin{proof}
  By~\autoref{prop:cat_haswcoe_tfib}, $\HasWCoe^{\CC} \to \CC^\MI$ is a split trivial fibration.
  By~\autoref{prop:exp_i_preserves_cofib}, the functor $(-)^\MI$ preserves algebraic cofibrations,
  so $\CC^\MI$ is algebraically cofibrant and $\HasWCoe^{\CC} \to \CC^\MI$ admits a section.
  By composing this section with $\HasWCoe^\CC \to \HasWCoe^{\overline{\CC}}$, we obtain a map $\CC^\MI \to \HasWCoe^{\overline{\CC}}$ displayed over $i^\MI : \CC^\MI \to \overline{\CC}^\MI$.

  By combining this with~\autoref{prop:cat_haswcoe_glue}, we can use the universal property of $\overline{\CC}^\MI$ from~\autoref{lem:cat_coffib_commutes_expi} to obtain a section of $\HasWCoe^{\overline{\CC}} \to \overline{\CC}^\MI$.
\end{proof}

\begin{prop}\label{prop:strict_rezk_cat_fibrant}
  The category $\overline{\CC}$ has fibrant components.
\end{prop}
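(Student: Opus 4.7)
The plan is to apply \autoref{lem:fibrancy_from_reflgraph} iteratively, once for each of the three sorts $\Ob$, $\Hom$, $\EqHom$ of $\overline{\CC}$, building up fibrancy layer by layer. The weak coercion data required by the lemma is exactly what $\HasWCoe^{\overline{\CC}}$ encodes, and \autoref{prop:cat_sect_haswcoe} supplies a section of $\HasWCoe^{\overline{\CC}} \to \overline{\CC}^\MI$. The homotopicality conditions will be supplied by the $\Cof$-fibrancy structure on $\overline{\CC}$, together with the fact that various auxiliary sets are strict singletons.

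To show that $\Ob_{\overline{\CC}}$ is fibrant, I would apply the lemma with trivial base $A = 1$ and dependent pseudo-reflexive graph $B = \Ob_{\overline{\CC}}$, $E_B(x,y) = \Iso_{\overline{\CC}}(x,y)$, $R_B(x,x_e) = (x_e = \id_x)$. This is precisely the pseudo-reflexive graph induced by $\ReflLoop_{\overline{\CC}} \to \Path_{\overline{\CC}} \rightrightarrows \overline{\CC}$ at the level of objects, and by the limit description of \autoref{rem:cat_haswcoe_limit} its weak coercion structure on lines is exactly what a section of $\HasWCoe^{\overline{\CC}}$ provides. Homotopicality reduces to trivial fibrancy of $(y, \Iso_{\overline{\CC}}(x,y))$, supplied directly by $\ext_\Ob$, and of $(x_e : \Iso(x,x)) \times (x_e = \id)$, which is a strict singleton.

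Next, to establish that $\Hom_{\overline{\CC}}$ is fibrant as a family, I would apply the lemma with base $A = \Ob_{\overline{\CC}} \times \Ob_{\overline{\CC}}$ (fibrant by the previous step, equipped with the product pseudo-reflexive graph structure) and dependent data $B(x,y) = \Hom_{\overline{\CC}}(x,y)$, where $E_B((x_e,y_e), f, g) = (y_e \circ f = g \circ x_e)$ and $R_B$ is the induced propositional reflexivity witness. Weak coercion at this layer is again read off from \autoref{prop:cat_sect_haswcoe}, and homotopicality holds because the two relevant sets are strict singletons: the commutativity condition forces $g = y_e \circ f \circ x_e^{-1}$, and the reflexive loop condition is propositional. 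Finally, fibrancy of $\EqHom_{\overline{\CC}}$ is immediate: each fiber is a proposition equipped with the extension structure $\ext_\EqHom$, hence trivially fibrant, and trivial fibrancy entails fibrancy by \autoref{prop:fibrant_contractibility_iff_ext}, coherently extended to the family via the same $((-)^\MI \dashv (-)_\MI)$ adjunction argument used in the proof of \autoref{lem:fibrancy_from_reflgraph}.

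The main subtlety will be in the second application, where I must verify that the product-style pseudo-reflexive graph structure on $\Ob_{\overline{\CC}} \times \Ob_{\overline{\CC}}$ paired with the commutativity-square structure on $\Hom_{\overline{\CC}}$ really does match the data extracted from $\HasWCoe^{\overline{\CC}}$ on lines of morphisms. The limit description in \autoref{rem:cat_haswcoe_limit} makes this matching routine but notationally heavy, and it is the only place where careful bookkeeping is required; nothing new beyond \autoref{prop:cat_sect_haswcoe} and the $\Cof$-fibrancy structure enters the argument.
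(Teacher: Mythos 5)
Your proposal is correct and follows essentially the same route as the paper's proof: three applications of \autoref{lem:fibrancy_from_reflgraph} (with $A=1$ for objects and $A=\Ob_{\overline{\CC}}\times\Ob_{\overline{\CC}}$ for morphisms), weak coercions supplied by \autoref{prop:cat_sect_haswcoe}, homotopicality from $\ext_\Ob$ plus the observation that the remaining sets are singletons, and the $\EqHom$ case handled directly via trivial fibrancy and \autoref{prop:fibrant_contractibility_iff_ext}. No substantive differences.
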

\begin{proof}
  We need to define weak composition structures for the objects, morphisms and morphism equalities of $\overline{\CC}$.
  \begin{description}
  \item[Weak composition for $\Ob_{\overline{\CC}}$]
    We use~\autoref{lem:fibrancy_from_reflgraph} for $A = \top$ and $B = \lambda \_ \to \Ob_{\overline{\CC}}$.
    The families $E_B$ and $R_B$ are given by the corresponding components of $\Path_{\overline{\CC}}$ and $\ReflLoop_{\overline{\CC}}$, namely
    \begin{alignat*}{1}
      & E_B(x,y) = \Iso_{\overline{\CC}}(x,y), \\
      & R_B(x,f) = \EqHom_{\overline{\CC}}(f,\id).
    \end{alignat*}
    By~\autoref{prop:cat_sect_haswcoe}, we have the required operations $\wcoe$ and $\wcoh$.

    It remains to check homotopicality, \ie to construct extension structures
    \begin{alignat*}{1}
      & \forall x \to \HasExt((y : \Ob_{\overline{\CC}}) \times \Iso_{\overline{\CC}}(x,y)), \\
      & \forall x \to \HasExt((f : \Iso_{\overline{\CC}}(x,x)) \times \EqHom_{\overline{\CC}}(f,\id)).
    \end{alignat*}

    They both arise from $\overline{\CC}$ being $\Cof$-fibrant.
    
  \item[Weak composition for $\Hom_{\overline{\CC}}$]
    We use~\autoref{lem:fibrancy_from_reflgraph} for $A = \Ob_{\overline{\CC}} \times \Ob_{\overline{\CC}}$ and $B(x,y) = \Hom_{\overline{\CC}}(x,y)$.
    The families $E_A$, $R_A$, $E_B$ and $R_B$ are given by the corresponding (limits of) components of $\Path_{\overline{\CC}}$ and $\ReflLoop_{\overline{\CC}}$, \eg
    $E_A((x_1,y_1),(x_2,y_2)) = \Iso_{\overline{\CC}}(x_1,x_2) \times \Iso_{\overline{\CC}}(y_1,y_2)$.
    By~\autoref{prop:cat_sect_haswcoe}, we have the required operations $\wcoe$ and $\wcoh$.
    
    It remains to construct extension structures
    \begin{alignat*}{1}
      & \forall (x_e : \Iso_{\overline{\CC}}(x_1,x_2)) (y_e : \Iso_{\overline{\CC}}(y_1,y_2)) (f_1 \in \Hom_{\overline{\CC}}(x_1,y_1)) \\
      & \quad \to \HasExt((f_2 : \Hom_{\overline{\CC}}(x_2,y_2)) \times \EqHom_{\overline{\CC}}(f_2 \circ x_e, y_e \circ f_1)), \\
      & \forall (x_e : \Iso_{\overline{\CC}}(x,x)) (x_r : \EqHom_{\overline{\CC}}(x_e,\id)) (y_e : \Iso_{\overline{\CC}}(x,x)) (y_r : \EqHom_{\overline{\CC}}(y_e,\id)) \\
      & \phantom{\forall} (f \in \Hom_{\overline{\CC}}(x,y)) \\
      & \quad \to \HasExt((f : \Iso_{\overline{\CC}}(x,x)) \times \EqHom_{\overline{\CC}}(f,\id)).
    \end{alignat*}

    The set $(f_2 : \Hom_{\overline{\CC}}(x_2,y_2)) \times \EqHom_{\overline{\CC}}(f_2 \circ x_e, y_e \circ f_1)$ has a unique element $(y_e \circ f_1 \circ x_e^{-1}, \refl)$.
    The set $(f : \Iso_{\overline{\CC}}(x,x)) \times \EqHom_{\overline{\CC}}(f,\id)$ has a unique element $(\id, \refl)$.
    This provides the needed extension structures.
    
  \item[Weak composition for $\EqHom_{\overline{\CC}}$]
    We could proceed as above, but this follows more directly from~\autoref{prop:fibrant_contractibility_iff_ext}, since $\EqHom_{\overline{\CC}}(f,g)$ is trivially fibrant.
    \qedhere
  \end{description}
\end{proof}

\begin{rem}
  In the proof of~\autoref{prop:strict_rezk_cat_fibrant} we had to prove the trivial fibrancy of some sets in order to ensure homotopicality.
  An alternative would be to add these as additional extension structures in the definition of $\Cof$-fibrancy structure.
  This would simplify the proof of~\autoref{prop:strict_rezk_cat_fibrant},
  at the price of additional cases in the proof of~\autoref{prop:cat_haswcoe_glue}.

  The cleanest approach is probably not to add these additional extension structures,
  but show that they arise from the trivial fibrancy of spaces of solutions to some lifting problem against cofibrations in $\CCat$.
  This would require studying in more details the notion of $\Cof$-fibration,
  its closure properties and how it interacts with the other classes of maps.
\end{rem}

We have now proven that $\overline{\CC}$ has fibrant components.
It remains to prove that $1_\square^\ast(i) : 1_\square^\ast(\CC) \to 1_\square^\ast(\overline{\CC})$ is an external split weak equivalence.

\begin{lem}\label{lem:ext_cof_fibrant_is_fibrant}
  The external category $1^\ast_\square(\overline{\CC})$ has the universal property of the fibrant replacement of $1^\ast_\square(\CC)$.
\end{lem}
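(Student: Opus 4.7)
The plan is to establish two things: first, that $1^\ast_\square(\overline{\CC})$ is complete in the external sense (so that it qualifies as a ``fibrant'' object in the relevant homotopy theory of categories); second, that $1^\ast_\square(i)$ is initial among functors out of $1^\ast_\square(\CC)$ into complete categories.

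For the first claim, I would combine \autoref{prop:strict_rezk_cat_fibrant} (fibrant components of $\overline{\CC}$) with \autoref{prop:cat_glue_complete} (the $\Cof$-fibrancy structure yields internal completeness) to deduce that $\overline{\CC}$ is complete internally in the sense of \autoref{def:cat_complete}. Then I would check that externalization preserves the relevant contractibility conditions: for a fibrant set $X$, internal contractibility of $X$ implies that $1^\ast_\square(X)$ is contractible, since $X$ is then equivalent to the terminal object and $1^\ast_\square$ preserves finite limits. Applied to the isomorphism space and its morphism- and equality-level analogues, this yields external completeness of $1^\ast_\square(\overline{\CC})$.

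For the universal property, I would take an external complete category $\CD$ and a functor $F : 1^\ast_\square(\CC) \to \CD$, and construct a lift $\overline{F} : 1^\ast_\square(\overline{\CC}) \to \CD$ factoring $F$ through $1^\ast_\square(i)$. The strategy is to embed $\CD$ as a discrete (constant) cubical category $\Delta(\CD)$ internally to $\CcSet$, so that $F$ corresponds to an internal functor $\CC \to \Delta(\CD)$. The central step is to equip $\Delta(\CD)$ with a $\Cof$-fibrancy structure using the external completeness of $\CD$. Once this is established, the universal property of $\overline{\CC}$ as the free $\Cof$-fibrant category on $\CC$ (\autoref{constr:strict_rezk_cat}) produces an internal functor $\overline{\CC} \to \Delta(\CD)$, whose externalization is the desired lift $\overline{F}$.

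The main obstacle is the construction of the $\Cof$-fibrancy structure on $\Delta(\CD)$. A priori, internal $\Cof$-fibrancy requires extension structures $\ext_\Ob$, $\ext_\Hom$, $\ext_\EqHom$ indexed by arbitrary cubical cofibrations $\alpha : \Cof$, while external completeness of $\CD$ only directly yields contractibility of the relevant isomorphism and equality sets. The resolution uses the fact that $\Delta(\CD)$ has constant components: partial elements parametrized by a cubical stage and a cofibration reduce to external choices in sets that are contractible by the completeness of $\CD$, and these choices can be made compatibly across the cubical structure because the ambient components are constant. Finitariness of extension structures (\autoref{cor:finitary_add_ext}) should then let this be organized cleanly, rather than having to manage an infinitary coherence condition.
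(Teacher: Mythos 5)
The statement you are proving is not the one the paper needs. ``Fibrant replacement'' here is meant in the algebraic sense: a fibrancy structure on an external category $\CX$ is an operation $\ext_\Ob : (x \in \CX) \to (y \in \CX) \times \Iso_\CX(x,y)$ (the specialization of a $\Cof$-fibrancy structure to the global cofibrations, which are only $\top$ and $\bot$), and the fibrant replacement of $1^\ast_\square(\CC)$ is the value at $1^\ast_\square(\CC)$ of the left adjoint to the forgetful functor $\CCat_\fib \to \CCat$, i.e.\ the free extension of $1^\ast_\square(\CC)$ by such an operation. Your reformulation --- external ``completeness'' of $1^\ast_\square(\overline{\CC})$ plus initiality among functors into complete categories --- is a different statement: completeness in the sense of \autoref{def:cat_complete} is a cubical notion (fibrancy plus contractibility) with no sensible reading for a bare external $1$-category (a literal reading would force iso-spaces to be singletons), and, more importantly, the downstream use of this lemma in \autoref{prop:strict_rezk_completion_cat_weq} is precisely to conclude that $1^\ast_\square(i)$ is an \emph{algebraic trivial cofibration}; that follows from the free/algebraic universal property but not from yours.

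The second gap is in the transfer mechanism. Since $1^\ast_\square$ is a left adjoint, the correct move is to push the target $\CD$ (an external category \emph{with a fibrancy structure}) into $\CcSet$ along the \emph{right} adjoint $(1_\square)_\ast$ and show that $(1_\square)_\ast(\CD)$ carries a unique $\Cof$-fibrancy structure extending its fibrancy structure. The constant cubical category you propose does not work: a partial object over the cofibration $(i=0) \vee (i=1)$ consists of two unrelated objects of $\CD$, and a constant presheaf has no element at stage $[1]$ restricting to both unless they are equal, so $\Delta(\CD)$ admits no extension structure in general. The paper instead transposes along $(1_\square)_\ast$ so that only \emph{global} cofibrations need to be handled, and these are decidable since $1^\ast_\square(\Cof) \cong \{\true,\false\}$: the case $\alpha = \top$ is forced and the case $\alpha = \bot$ is the given fibrancy structure. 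This yields a morphism of adjunctions commuting with the forgetful functors, hence an adjunction of comma categories $(\CC \downarrow R) \rightleftarrows (1^\ast_\square(\CC) \downarrow R)$ whose left adjoint acts by $1^\ast_\square$; preservation of the initial object is exactly the lemma. (Finitariness of extension structures plays no role in this step.)
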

\begin{proof}
  This almost follows from the fact that the left adjoint $1^\ast_\square$ preserves colimits, except for the fact that the definition of $\overline{\CC}$ depends on the internal notion of cofibration.
  This can be seen as a crisp induction principle for $\overline{\CC}$, and could alternatively be proven in the spatial type theory of~\textcite{ShulmanCrisp}.
  
  A fibrancy structure on a category $\CX$ is an operation
  \begin{alignat*}{1}
    & \ext_\Ob : (x \in \CX) \to (y \in \CX) \times (f : \Iso_\CX(x,y)).
  \end{alignat*}
  (Operations $\ext_\Hom$ and $\ext_\EqHom$ would be trivial for the same reasons as~\autoref{prop:cat_glue_hom_unique}.)

  We work externally.
  
  Write $\CCat_\fib$ for the category of categories with a fibrancy structure,
  $\CcCat_\fib$ for the category of cubical categories with fibrancy structures,
  and $\CCat_{\Cof\text{-}\fib}$ for the category of cubical categories with a $\Cof$-fibrancy structure.

  There is a functor $\CcCat_{\Cof\text{-}\fib} \to \CcCat_\fib$,
  obtained by specializing the $\Cof$-fibrancy structure to the cofibration $\bot$.
  Moreover, this functor is induced by a morphism of essentially algebraic theories.

  We have the following diagram, where the bottom part is a morphism of adjunctions.
  \[ \begin{tikzcd}[row sep=40pt, column sep=30pt]
      \CcCat_{\coffib}
      \ar[d]
      \\ \CcCat_{\fib}
      \ar[r, bend right=15, "{1^\ast_\square}"']
      \ar[r, phantom, "\top"]
      \ar[d]
      & \CCat_\fib
      \ar[d]
      \ar[l, bend right=15, "{{(1_\square)}_\ast}"']
      \\ \CcCat
      \ar[r, bend right=15, "{1^\ast_\square}"']
      \ar[r, phantom, "\top"]
      & \CCat
      \ar[l, bend right=15, "{{(1_\square)}_\ast}"']
    \end{tikzcd} \]

  Now for any $\CC : \CCat_\fib$,
  we prove that ${(1_\square)}_\ast(\CC)$ admits a unique $\Cof$-fibrancy structure extending its fibrancy structure.
  The component of the $\Cof$-fibrancy structure for $\alpha : \Cof$ consists of a map
  \[ (x : {(1_\square)}_\ast(X)) (y : [\alpha] \to {(1_\square)}_\ast(Y(x))) \to {(1_\square)}_\ast(\{ Y(x) \mid \alpha \hra y \}) \]
  for some sets $X$ and $Y$.
  By properties of the right adjoint ${(1_\square)}_\ast$, we can assume that $\alpha$, $x$ and $y$ are global elements.
  In particular, the cofibration $\alpha$ is either $\top$ or $\bot$, since global cofibrations are decidable.
  When $\alpha = \top$, the component of the fibrancy structure is uniquely determined.
  Thus, a $\Cof$-fibrancy structure on ${(1_\square)}_\ast(\CC)$ is uniquely determined by its component for $\alpha = \bot$.
  In other words, ${(1_\square)}_\ast(\CC)$ has a unique $\Cof$-fibrancy structure extending its fibrancy structure.

  This implies that we have a morphism of adjunctions
  \[ \begin{tikzcd}[row sep=40pt, column sep=30pt]
      \CcCat_{\coffib}
      \ar[r, bend right=15, "{1^\ast_\square}"']
      \ar[r, phantom, "\top"]
      \ar[d, "R"]
      & \CCat_\fib
      \ar[d, "R"]
      \ar[l, bend right=15, "{{(1_\square)}_\ast}"']
      \\ \CcCat
      \ar[r, bend right=15, "{1^\ast_\square}"']
      \ar[r, phantom, "\top"]
      & \CCat
      \ar[l, bend right=15, "{{(1_\square)}_\ast}"']
    \end{tikzcd} \]

  In this situation, we have, for any $\CC : \CcCat$, an adjunction between commas categories
  $(\CC \downarrow R)$ and $(1^\ast_\square(\CC) \downarrow R)$,
  whose left adjoint sends $(\CD : \CcCat_\coffib, f : \CC \to R(\CD))$ to $(1^\ast_\square(\CD), 1^\ast_\square(f))$.
  The preservation of the initial object then says
  that $1^\ast_\square$ sends the $\Cof$-fibrant replacement of $\CC$ to the fibrant replacement of $1^\ast_\square(\CC)$,
  as needed.
\end{proof}

\begin{prop}\label{prop:strict_rezk_completion_cat_weq}
  The functor $1^\ast_\square(i) : 1^\ast_\square(\CC) \to 1^\ast_\square(\overline{\CC})$ is a split weak equivalence.
\end{prop}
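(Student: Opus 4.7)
My plan is to invoke Proposition~\ref{prop:cat_tcof_is_weq} applied to $F = 1^\ast_\square(i)$. For this I must verify two hypotheses: that $\pi : \ReflLoop_{1^\ast_\square(\CC)} \to 1^\ast_\square(\CC)$ admits a section, and that $1^\ast_\square(i)$ is an algebraic trivial cofibration in $\CCat$.

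The first hypothesis is essentially free: as observed in the remark following the construction of $\ReflLoop_\CC$, the projection $\pi : \ReflLoop_\CC \to \CC$ is an isomorphism for any $1$-category (since $\EqHom_\CC(f,g)$ is propositional), and this property is preserved by the finite-limit-preserving functor $1^\ast_\square$.

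For the second hypothesis, I exploit Lemma~\ref{lem:ext_cof_fibrant_is_fibrant}, which identifies $1^\ast_\square(i)$ with the unit of the fibrant replacement functor $\CCat \to \CCat_\fib$. Given a lifting problem
\[ \begin{tikzcd} 1^\ast_\square(\CC) \ar[r, "f"] \ar[d, "{1^\ast_\square(i)}"'] & X \ar[d, "p"] \\ 1^\ast_\square(\overline{\CC}) \ar[r, "g"'] & Y \end{tikzcd} \]
against a split fibration $p$, I form the pullback $g^\ast X \to 1^\ast_\square(\overline{\CC})$; this is again a split fibration, and $f$ induces a canonical functor $\tilde f : 1^\ast_\square(\CC) \to g^\ast X$ with $\pi_1 \circ \tilde f = 1^\ast_\square(i)$ and $\pi_2 \circ \tilde f = f$. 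I then equip $g^\ast X$ with a fibrancy structure by combining the splitting of $p$ with the fibrancy of $1^\ast_\square(\overline{\CC})$: given $(\bar x, x) \in g^\ast X$, take $\ext_\Ob(\bar x) = (\bar y, \iota)$ in $1^\ast_\square(\overline{\CC})$ and lift $g(\iota)$ along $p$ starting from $x$ to produce $(\bar y, y) \in g^\ast X$ with an isomorphism $(\bar x, x) \cong (\bar y, y)$. By construction, $\pi_1 : g^\ast X \to 1^\ast_\square(\overline{\CC})$ preserves this fibrancy structure.

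By the universal property of fibrant replacement, $\tilde f$ extends to a unique fibrancy-preserving $\tilde h : 1^\ast_\square(\overline{\CC}) \to g^\ast X$ with $\tilde h \circ 1^\ast_\square(i) = \tilde f$. Since both $\pi_1 \circ \tilde h$ and $\id_{1^\ast_\square(\overline{\CC})}$ are fibrancy-preserving extensions of $\pi_1 \circ \tilde f = 1^\ast_\square(i)$, uniqueness forces $\pi_1 \circ \tilde h = \id$, so that $\tilde h$ is a section of $\pi_1$. The diagonal lift is then $h = \pi_2 \circ \tilde h$, which satisfies $p \circ h = g$ and $h \circ 1^\ast_\square(i) = f$ as required. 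Proposition~\ref{prop:cat_tcof_is_weq} then yields the conclusion. I expect the main obstacle to be the second hypothesis, and specifically the verification that the induced fibrancy structure on $g^\ast X$ is compatible with $\pi_1$---this is the point at which the specific form of $\Cof$-fibrancy (and thus Lemma~\ref{lem:ext_cof_fibrant_is_fibrant}) really enters the argument.
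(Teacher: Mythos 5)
Your proposal follows the paper's proof: both reduce the statement to the two hypotheses of Proposition~\ref{prop:cat_tcof_is_weq}, obtaining the algebraic trivial cofibration property of $1^\ast_\square(i)$ from Lemma~\ref{lem:ext_cof_fibrant_is_fibrant}, and your explicit pullback argument for why the unit of the fibrant replacement lifts against split fibrations is exactly the verification the paper leaves implicit (it is correct: the induced fibrancy structure on $g^\ast X$ is $\pi_1$-preserving by construction, and uniqueness in the universal property forces $\pi_1 \circ \tilde h = \id$). The only divergence is in the first hypothesis: you invoke the fact that $\pi : \ReflLoop_\CC \to \CC$ is an isomorphism of $1$-categories, which the paper records in a remark but deliberately avoids relying on (so that the argument transfers verbatim to models of HoTT, where $\pi$ is only a split trivial fibration); the paper instead produces the section from the algebraic cofibrancy of $\CC$, then pushes it through $1^\ast_\square$ using preservation of finite limits. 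Both routes are valid for categories.
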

\begin{proof}
  We first show that $\pi : \ReflLoop_{1^\ast_\square(\CC)} \to 1^\ast_\square(\CC)$ admits a section.
  Since $\CC$ is algebraically cofibrant and $\pi_e : \ReflLoop_\CC \to \CC$ is a split trivial fibration, we have a section $r$ of $\pi : \ReflLoop_\CC \to \CC$, hence a section $1^\ast_\square(r)$ of $1^\ast_\square(\pi) : 1^\ast_\square(\ReflLoop_\CC) \to 1^\ast_\square(\CC)$.
  Since $1^\ast_\square$ preserves finite limits and the components of $\ReflLoop_\CC$ are finite limits of components of $\CC$, we have $1^\ast_\square(\ReflLoop_\CC) = \ReflLoop_{1^\ast_\square(\CC)}$.
  Thus, $1^\ast_\square(r)$ is a section of $\pi : \ReflLoop_{1^\ast_\square(\CC)} \to 1^\ast_\square(\CC)$.
  
  By~\autoref{lem:ext_cof_fibrant_is_fibrant}, we know that $1^\ast_\square(i) : 1^\ast_\square(\CC) \to 1^\ast_\square(\overline{\CC})$ is an algebraic trivial cofibration.

  We have verified the conditions of~\autoref{prop:cat_tcof_is_weq}.
  Therefore, $1^\ast_\square(i) : 1^\ast_\square(\CC) \to 1^\ast_\square(\overline{\CC})$ is a split weak equivalence.
\end{proof}

\begin{thm}
  Any global cofibrant category with fibrant components admits a strict Rezk completion.
\end{thm}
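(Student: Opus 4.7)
The plan is to assemble the results already proven in the section: the candidate strict Rezk completion has been constructed as the $\Cof$-fibrant replacement $\overline{\CC}$ in~\autoref{constr:strict_rezk_cat}, and almost all of the work has been done. So the proof is essentially a bookkeeping argument that checks the three conditions in the definition of a strict Rezk completion, using the propositions already established.

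First I would take $\CC$ to be a global algebraically cofibrant category with fibrant components, and let $\overline{\CC}$ together with $i : \CC \to \overline{\CC}$ be the $\Cof$-fibrant replacement of~\autoref{constr:strict_rezk_cat}. By construction, $\overline{\CC}$ carries a $\Cof$-fibrancy structure. Then I would invoke~\autoref{prop:strict_rezk_cat_fibrant} to conclude that $\overline{\CC}$ has fibrant components.

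Next, combining the fibrancy of the components of $\overline{\CC}$ with its $\Cof$-fibrancy structure, I would apply~\autoref{prop:cat_glue_complete} (which relies on~\autoref{prop:fibrant_contractibility_iff_ext} to pass from trivial fibrancy to contractibility in the presence of fibrancy) to conclude that $\overline{\CC}$ is complete in the sense of~\autoref{def:cat_complete}.

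Finally, for the weak equivalence condition, I would appeal directly to~\autoref{prop:strict_rezk_completion_cat_weq}, which states that $1^\ast_\square(i) : 1^\ast_\square(\CC) \to 1^\ast_\square(\overline{\CC})$ is a split weak equivalence of external categories. Together, these three facts verify the conditions required of a strict Rezk completion. There is no real obstacle here since the heavy lifting has already been done: the genuine difficulty lies in the preceding propositions, namely proving fibrancy of the components (which required the machinery of~\autoref{lem:fibrancy_from_reflgraph} together with the existence of a global section of $\HasWCoe^{\overline{\CC}} \to \overline{\CC}^\MI$ established in~\autoref{prop:cat_sect_haswcoe}), and proving that the externalization of $i$ is a weak equivalence (which required~\autoref{lem:ext_cof_fibrant_is_fibrant} identifying $1^\ast_\square(\overline{\CC})$ with the fibrant replacement of $1^\ast_\square(\CC)$, and then~\autoref{prop:cat_tcof_is_weq}).
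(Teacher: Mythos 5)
Your proposal is correct and matches the paper's proof exactly: the paper likewise takes $\overline{\CC}$ from~\autoref{constr:strict_rezk_cat} and cites~\autoref{prop:strict_rezk_cat_fibrant}, \autoref{prop:cat_glue_complete}, and~\autoref{prop:strict_rezk_completion_cat_weq} to verify the three conditions. Your additional remarks about where the real work lies are accurate but not needed for the proof itself.
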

\begin{proof}
  We use the category $\overline{\CC}$ defined in~\autoref{constr:strict_rezk_cat}.
  By~\autoref{prop:strict_rezk_cat_fibrant} it has fibrant components.
  By~\autoref{prop:cat_glue_complete} it is complete.
  By~\autoref{prop:strict_rezk_completion_cat_weq}, the functor $i : \CC \to \overline{\CC}$ is a split weak equivalence.
\end{proof}


\section{Semantics of HoTT}\label{sec:hott}

In this section we describe the semantics of HoTT, \ie we describe its category of models and some model constructions.

We choose a variant of HoTT in which every type belongs to some universe.
The types are stratified by a hierarchy of $\omega$ universes $(\UU_n)_{n<\omega}$, and types at level $n$ are in bijective correspondence with terms of $\UU_n$.
Using this variant, it suffices to consider terms in many constructions, instead of dealing with terms and types separately.

\subsection{Families}

We first describe HoTT as a second-order theory, \ie using higher-order abstract syntax.

\begin{defi}
  A \defemph{cumulative family} consists of the following components, where $n$ ranges over natural numbers:
  \begin{alignat*}{1}
    & \Ty_n : \SSet, \\
    & \Tm_n : \Ty_n \to \SSet, \\
    & \Lift_n : \Ty_n \to \Ty_{n+1}, \\
    & \lift : \Tm_n(A) \cong \Tm_{n+1}(\Lift_n(A)), \\
    & \UU_n : \Ty_{n+1}, \\
    & \El : \Tm(\UU_n) \cong \Ty_n.
      \tag*{\qedhere}
  \end{alignat*}
\end{defi}

If $\MM$ is a cumulative family, we may write $\MM_n$ instead of $\MM.\Ty_n$.
We also omit $\Tm(-)$ when possible.
For instance, given a type $A : \MM_n$, the set of dependent type may be written $A \to \MM_n$ instead of $\MM.\Tm(A) \to \MM.\Ty_n$.
We similarly omit $\Lift(-)$, $\lift(-)$ and $\El(-)$ when unambiguous.

\begin{defi}
  A \defemph{MLTT-family} is a cumulative family equipped with the structures of $\Pi$-types with function extensionality, $\Sigma$-types, $\Unit$-types, $\Id$-types, boolean types, empty types and $W$-types.
\end{defi}


The following definitions of contractible types and equivalences are used to specify univalent universes.
\begin{alignat*}{1}
  & \isContr(A) \triangleq (x : A) \times ((y : A) \to \Id_A(x,y)), \\
  & \isEquiv(f) \triangleq (b : B) \to \isContr((a : A) \times \Id_B(f(a),b)), \\
  & \Equiv(A,B) \triangleq (f : A \to B) \times \isEquiv(f).
\end{alignat*}

\begin{defi}
  A \defemph{univalence structure} on a MLTT-family consists of operations
  \begin{alignat*}{1}
    & \ua_n : (A : \UU_n) \to \isContr((B : \UU_n) \times \Equiv(A,B)).
      \tag*{\qedhere}
  \end{alignat*}
\end{defi}

\begin{defi}
  A \defemph{HoTT-family} is a MLTT-family equipped with a univalence structure.
\end{defi}

\subsection{Models}

Our notion of models is based on categories with families~\parencite{InternalTypeTheory,CwFsUSD}.

\begin{defi}
  A \defemph{model of HoTT} is a category $\CM$, with a terminal object $1_\CM$, together with a global HoTT-family $(\CM.\Ty,\CM.\Tm,\dots)$ in $\CPsh(\CM)$, such that for every $n < \omega$, the dependent presheaf $\CM.\Tm_n$ is locally representable.
\end{defi}

If $\CM$ is a model, we will write $a \Colon X$ to indicate that $A$ is a global element of a global type $X$ of the presheaf model $\CPsh(\CM)$.
In particular, we may write $A \Colon \CM.\Ty_n$ (or $A \Colon \UU_n$) to indicate that $A$ is a closed type, or $A \Colon \yo(\Gamma) \to \CM.\Ty_n$ to indicate that $A$ is a type over $\Gamma \in \CM$.

We will sometimes need to restrict to democratic models.
\begin{defi}
  A model $\CM$ is \defemph{democratic} if for every object $\Gamma \in \CM$, there is a closed type $K(\Gamma)$ and an isomorphism $1.K(\Gamma) \cong \Gamma$.
\end{defi}
Given a democratic model, we will identify contexts with closed types and omit the operation $K(-)$ and the isomorphism $1.K(\Gamma) \cong \Gamma$.

The category of models of HoTT can be equipped with classes of weak equivalences, fibrations, trivial fibrations, \etc
These correspond to the classes of maps introduced by~\textcite{HomotopyTheoryTTs}.
These classes of maps are \emph{local}, in the sense that their lifting conditions only involve types and terms, not objects and morphisms.
As a consequence, they are only well-behaved when restricted to democratic models.
Because types are in bijective correspondence with terms of universes, we can omit any lifting condition involving types from our definitions.
Only lifting conditions for terms are needed.

\begin{defi}
  A morphism $F : \CM \to \CN$ between models of HoTT is a \defemph{split weak equivalence} if the following weak lifting condition is satisfied:
  \begin{description}
  \item[Weak term lifting] For every type $A : \CM.\Ty(\Gamma)$ and term $a : \CN.\Tm(F(\Gamma),F(A))$, there is a term $a_0 : \CM.\Tm(\Gamma,A)$ and an identification $p : \CN.\Tm(F(\Gamma), \Id_{F(A)}(F(a_0),a))$.
    \qedhere
  \end{description}
\end{defi}

\begin{defi}
  A morphism $F : \CM \to \CN$ between models of HoTT is a \defemph{split trivial fibration} if the following lifting condition is satisfied:
  \begin{description}
  \item[Term lifting] For every type $A : \CM.\Ty(\Gamma)$ and term $a : \CN.\Tm(F(\Gamma),F(A))$, there is a term $a_0 : \CM.\Tm(\Gamma,A)$ such that $F(a_0) = a$.
  \end{description}

  A morphism $I : \CA \to \CB$ is an \defemph{algebraic cofibration} if it is equipped with lifting structures against all split trivial fibrations.
\end{defi}

\begin{defi}
  A morphism $F : \CM \to \CN$ between models of HoTT is a \defemph{split fibration} if the following lifting condition is satisfied:
  \begin{description}
  \item[Identification lifting] For every term $a : \CM.\Tm(\Gamma,A)$ and identification
    \[ p : \CN.\Tm(F(\Gamma), \Id_{F(A)}(F(x),y)), \]
    there is a term $y_0 : \CM.\Tm(\Gamma,A)$ and an identification $p_0 : \CM.\Tm(\Gamma,\Id_A(x,y_0))$ such that $F(y_0) = y$ and $F(p_0) = p$.
  \end{description}

  A morphism $I : \CA \to \CB$ is an \defemph{algebraic trivial cofibration} if it is equipped with lifting structures against all split fibrations.
\end{defi}

\begin{prop}\label{prop:hott_two_out_of_three}
  Split weak equivalences between democratic models satisfy the $2$-out-of-$3$ property. 
\end{prop}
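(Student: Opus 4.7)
The plan is to address each of the three directions of $2$-out-of-$3$ for composable morphisms $F \colon \CM \to \CN$ and $G \colon \CN \to \CK$. Throughout, I would use that morphisms of HoTT models strictly preserve the type formers (in particular $\Id$, $\Pi$, $\UU_n$), so that identifications and $\lambda$-abstractions commute with $F$ and $G$ on the nose, and that both democratic hypothesis $1.K(\Gamma) \cong \Gamma$ is available at the source and target.

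The two directions in which $F$ appears on the source side follow routinely from the weak lifting condition and transitivity of $\Id$. If both $F$ and $G$ are split weak equivalences and $c : \CK.\Tm(GF(\Gamma), GF(A))$ with $A : \CM.\Ty(\Gamma)$, I would first apply $G$'s weak lifting to $c$ (over $F(A) : \CN.\Ty(F(\Gamma))$) to produce $b : \CN.\Tm(F(\Gamma), F(A))$ with $G(b) \sim c$, then apply $F$'s weak lifting to $b$ to produce $a : \CM.\Tm(\Gamma, A)$ with $F(a) \sim b$; composing $G$ of the second identification with the first yields $GF(a) \sim c$. If $GF$ and $G$ are split weak equivalences and $b : \CN.\Tm(F(\Gamma), F(A))$ with $A : \CM.\Ty(\Gamma)$, I would apply $(G \circ F)$'s weak lifting to $G(b)$ to produce $a : \CM.\Tm(\Gamma, A)$ and $r : \Id_{GF(A)}(GF(a), G(b))$; since $G$ preserves $\Id$ strictly, $r$ is a term of $G(\Id_{F(A)}(F(a), b))$, so $G$'s weak lifting applied to this type and $r$ produces a term $p : \CN.\Tm(F(\Gamma), \Id_{F(A)}(F(a), b))$ as required, its accompanying identification being discarded.

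The main obstacle is the remaining case: assuming $F$ and $G \circ F$ are split weak equivalences, showing that $G$ is. The difficulty is that weak term lifting demands the type to lie on the source side, yet we are given $A : \CN.\Ty(\Delta)$ rather than a pullback along $F$. Here I would combine democracy with univalence. Given $b : \CK.\Tm(G(\Delta), G(A))$, first reduce to the closed case: using democracy of $\CN$ set $B \triangleq \Pi_{K(\Delta)} A : \CN.\Ty_n$ and $\tilde b \triangleq \lambda(b) : \CK.\Tm(1, G(B))$, using that $G$ preserves $\Pi$ and $\lambda$; a closed lift $\tilde b_0$ of $\tilde b$ gives back the required lift of $b$ by application to the generic variable and a short computation using $\beta$. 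For the closed case, view $B$ as a term of $\CN.\Tm(F(1), F(\UU_n))$ and apply $F$'s weak lifting to obtain $B' : \CM.\Ty_n$ together with $q : \Id_{\UU_n}(F(B'), B)$ in $\CN$; by univalence $q$ gives an equivalence $e : F(B') \simeq B$, hence $G(e) : GF(B') \simeq G(B)$ in $\CK$. Set $\tilde b' \triangleq G(e)^{-1}(\tilde b)$, apply $(G \circ F)$'s weak lifting to produce $a : \CM.\Tm(1, B')$ with $GF(a) \sim \tilde b'$, and define $\tilde b_0 \triangleq e(F(a)) : \CN.\Tm(1, B)$; then $G(\tilde b_0) = G(e)(GF(a)) \sim G(e)(\tilde b') \sim \tilde b$, as required. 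The hard part is precisely this use of democracy plus univalence to compensate for the asymmetry in the lifting condition, which only lets us lift terms of types already located on the source side of the morphism.
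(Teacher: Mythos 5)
Your proposal is correct and follows essentially the same route as the paper: the two easy directions are handled identically, and for the hard direction ($F$ and $G\circ F$ weak equivalences imply $G$ is) the paper likewise uses democracy together with weak lifting of universe codes along $F$ and transport along the resulting equivalences. The only cosmetic difference is that you collapse the context and type into a single closed $\Pi$-type before lifting, whereas the paper keeps an equivalence of contexts $\alpha$ and a dependent equivalence of types $\beta$ separate; both amount to the same essential-surjectivity argument.
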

\begin{proof}
  Let $F : \CC \to \CD$ and $G : \CD \to \CE$ be two composable morphisms between democratic models.
  \begin{description}
  \item[{1}]
    Assume that both $F$ and $G$ are weak equivalences.
    
    Take a term $a : \CE.\Tm(G(F(\Gamma)),G(F(A))$.
    Since $G$ is a weak equivalence, there is $a_0 : \CD.\Tm(F(\Gamma),F(A))$ and $p_0 : \CE.\Tm(G(F(\Gamma)),\Id(G(a_0),a))$.
    Since $F$ is a weak equivalence, there is $a_1 : \CC.\Tm(\Gamma,A)$ and $p_1 : \CD.\Tm(F(\Gamma),\Id(F(a_1),a_0))$.
    Then $(G(p_1) \cdot p_0) : \CE.\Tm(G(F(\Gamma)), \Id(G(F(a_1)), a))$ witnesses the fact that $a_1$ is a weak lift of $a$.
    Thus $(G \circ F)$ is a weak equivalence.
  \item[{2}]
    Assume that both $G$ and $(G \circ F)$ are weak equivalences.
    
    Take a term $a : \CD.\Tm(F(\Gamma),F(A))$.
    Since $(G \circ F)$ is a weak equivalence, there is $a_0 : \CC.\Tm(\Gamma,A)$ and $p_0 : \CE.\Tm(G(F(\Gamma)),\Id(G(F(a_0)),G(a)))$.
    Since $G$ is a weak equivalence, there is $p_1 : \CD.\Tm(F(\Gamma), \Id(F(a_0),a)$, exhibiting $a_0$ as a weak lift of $a$.
    Thus $F$ is a weak equivalence.
  \item[{3}]
    Assume that both $F$ and $(G \circ F)$ are weak equivalences.
    
    Take a term $a : \CE.\Tm(G(\Gamma),G(A))$.
    Since $F$ is a weak equivalence and $\CD$ is democratic, there is $\Gamma_0 \in \CC$ and $A_0 : \CC.\Ty(\Gamma_0)$, an equivalence $\alpha$ between $F(\Gamma_0)$ and $\Gamma$ and a dependent equivalence $\beta$ between $F(A_0)$ and $A$ lying over $\alpha$.
    We can transport $a$ over $G(\alpha)$ and $G(\beta)$ to obtain a term $a_0 : \CE.\Tm(G(F(\Gamma_0)),G(F(A_0))$.
    Since $(G \circ F)$ is a weak equivalence, there is a lift $a_1 : \CC.\Tm(\Gamma_0,A_0)$ and $p_1 : \CE.\Tm(G(F(\Gamma_0)), \Id(G(F(a_1)), a_0))$.
    Now define $a_2 : \CD.\Tm(\Gamma,A)$ by transporting $a_1$ over $\alpha$ and $\beta$.
    The transports cancel out in $G(a_2)$, and we obtain an identification $p_2 : \CE.\Tm(G(\Gamma_0), \Id(G(a_2), a))$, exhibiting $a_2$ as a weak lift of $a$.
    Thus $G$ is a weak equivalence.
    \qedhere
  \end{description}
\end{proof}

\begin{prop}\label{prop:hott_weq_retracts}
  Split weak equivalences are closed under retracts. 
\end{prop}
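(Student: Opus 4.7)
The plan is to give a direct term-lifting argument that mirrors the usual retract argument for lifting properties in a weak factorization system, using the fact that morphisms of models of HoTT preserve identity types (and hence, terms of identity types).

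Suppose $F : \CM \to \CN$ is a retract of some split weak equivalence $G : \CM' \to \CN'$, witnessed by a commutative diagram
\[
  \begin{tikzcd}
    \CM \ar[r, "i"] \ar[d, "F"'] & \CM' \ar[r, "r"] \ar[d, "G"] & \CM \ar[d, "F"] \\
    \CN \ar[r, "i'"'] & \CN' \ar[r, "r'"'] & \CN
  \end{tikzcd}
\]
with $r \circ i = \id_\CM$ and $r' \circ i' = \id_\CN$. Given $\Gamma \in \CM$, a type $A : \CM.\Ty(\Gamma)$, and a term $a : \CN.\Tm(F(\Gamma), F(A))$, the task is to produce a term $a_0 : \CM.\Tm(\Gamma, A)$ together with an identification $p : \CN.\Tm(F(\Gamma), \Id_{F(A)}(F(a_0), a))$.

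First, I push $a$ through $i'$ to obtain a term $i'(a) : \CN'.\Tm(G(i(\Gamma)), G(i(A)))$, using $i' \circ F = G \circ i$. Since $G$ is a split weak equivalence, there exist a term $a_0' : \CM'.\Tm(i(\Gamma), i(A))$ and an identification $p' : \CN'.\Tm(G(i(\Gamma)), \Id_{G(i(A))}(G(a_0'), i'(a)))$. Next, I pull back along $r$ and $r'$: set $a_0 \triangleq r(a_0')$, which lives in $\CM.\Tm(r(i(\Gamma)), r(i(A))) = \CM.\Tm(\Gamma, A)$, and let $p \triangleq r'(p')$. Because morphisms of models of HoTT strictly preserve $\Id$-types, $r'$ sends $p'$ to a term of $\Id_{r'(G(i(A)))}(r'(G(a_0')), r'(i'(a)))$. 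Using $r' \circ G = F \circ r$ and $r' \circ i' = \id_\CN$, this simplifies to $\Id_{F(A)}(F(a_0), a)$ over $F(\Gamma)$, as required.

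There is no substantive obstacle here: the argument uses only (i) strict functoriality of the retract diagrams on objects, types, and terms, and (ii) the fact that morphisms of models commute strictly with $\Id$-formation. The step that deserves a brief sanity check is the identification of sorts after applying $r'$, where one relies on the equalities $r' \circ i' = \id_\CN$ and $r' \circ G = F \circ r$ being strict (not merely up to identification), which they are since these are equalities in the $1$-category of models. This completes the verification that $F$ satisfies the weak term lifting property, so $F$ is a split weak equivalence.
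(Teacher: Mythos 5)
Your proof is correct and is essentially the paper's own argument: push the term into the middle model along the section, apply the weak term lifting property of the given split weak equivalence, and retract the resulting lift and identification back, using strictness of the retract identities to match up the sorts. The only difference is notational (the paper calls the retracted morphism $G$ and the middle one $F$).
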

\begin{proof}
  Take a retract diagram
  \[ \begin{tikzcd}
    \CA
    \ar[r, "S_1"]
    \ar[d, "G"]
    & \CB
    \ar[d, "F"]
    \ar[r, "R_1"]
    & \CA
    \ar[d, "G"]
    \\
    \CC
    \ar[r, "S_2"]
    & \CD
    \ar[r, "R_2"]
    & \CC,
  \end{tikzcd} \]
  and assume that $F : \CB \to \CD$ is a split weak equivalence.

  Take a term $a : \CC.\Tm(G(\Gamma),G(A))$.
  Since $F$ is a weak equivalence, there is $a_0 : \CB.\Tm(S_1(\Gamma),S_1(A))$ and an identification $p_0 : \CD.\Tm(S_2(G(\Gamma)), \Id(F(a_0), S_2(a)))$.
  Then $R_1(a_0) : \CA.\Tm(\Gamma,A)$ and $R_2(p_0) : \CC.\Tm(G(\Gamma), \Id(G(R_1(a_0)), a))$ is an identification witnessing the fact that $R_1(a_0)$ is a weak lift of $a$.
  Thus $G$ is a weak equivalence.
\end{proof}

\subsection{Displayed families}

We now describe displayed HoTT-families, which should be thought as the motives and methods of the induction principle that we will use to prove homotopy canonicity.
Displayed HoTT-families correspond to the notion of displayed higher-order model from~\textcite{InternalSconing}.

\begin{defi}
  A \defemph{displayed cumulative family} $\MM^\bullet$ over a model $\CM$ consists of the following components:
  \begin{alignat*}{1}
    & \Ty_n^\bullet : \CM.\Ty_n(1_\CM) \to \SSet, \\
    & \Tm_n^\bullet : \Ty_n^\bullet(A) \to \CM.\Tm_n(1_\CM,A) \to \SSet, \\
    & \Lift_n^\bullet : \Ty_n^\bullet(A) \to \Ty_{n+1}^\bullet(\Lift_n(A)), \\
    & \lift^\bullet : \Tm_n^\bullet(A^\bullet,a) \cong \Ty_{i+1}^\bullet(\Lift_n^\bullet(A^\bullet),\lift(a)), \\
    & \UU_n^\bullet : \Ty_{n+1}^\bullet(\UU_n), \\
    & \El^\bullet : \Tm^\bullet(\UU_n^\bullet,A) \cong \Ty_n^\bullet(\El(A)).
  \end{alignat*}

  A \defemph{displayed MLTT-family} is a displayed cumulative family together with displayed $\Pi$-types with function extensionality, $\Sigma$-types, $\Unit$-types, $\Id$-types, boolean types, empty types and $W$-types.
\end{defi}

We can compute the following definitions of displayed contractibility witnesses and equivalences.
\begin{alignat*}{1}
  & \isContr^\bullet(A^\bullet,c) = (x^\bullet : A^\bullet(\fst(c))) \times (\forall y\ (y^\bullet : A^\bullet(y)) \to \Id^\bullet(A^\bullet,x^\bullet,y^\bullet, \app(\snd(c), y))), \\
  & \isEquiv^\bullet(f^\bullet,e) \\
  & \quad = (\forall b\ b^\bullet \to \isContr^\bullet(\lambda (a,p) \mapsto (a^\bullet : A^\bullet(a)) \times (p^\bullet : \Id^\bullet(B^\bullet,f^\bullet(a^\bullet),b^\bullet,p)), \app(e,b))), \\
  & \Equiv^\bullet(A^\bullet,B^\bullet,f) = (f^\bullet : \forall a \to A^\bullet(a) \to B^\bullet(\app(\fst(f),a))) \times \isEquiv^\bullet(f^\bullet,\snd(f)).
\end{alignat*}

\begin{defi}
  A displayed HoTT-family is a displayed MLTT-family together with a displayed univalence structure:
  \begin{alignat*}{1}
    & \ua^\bullet : \forall A, (A^\bullet : \Ty_n^\bullet(A)) \to \isContr^\bullet(\lambda (B,f) \mapsto (B^\bullet : \Ty_n^\bullet(B)) \times \Equiv^\bullet(A^\bullet,B^\bullet,f), \ua(A)).
      \tag*{\qedhere}
  \end{alignat*}
\end{defi}

\subsection{Sconing}

We also recall the sconing operation, also called displayed contextualization, which turns a displayed family into a displayed model.
The purpose of this construction is to allow for the use of the induction principle of the syntax of HoTT (any displayed model over the syntax admits a section).
Strict canonicity for MLTT can be proven using an instance of this construction; we refer the reader to~\textcite{InternalSconing} for more details.

\begin{construction}
  If $\MM^\bullet$ is a displayed HoTT-family over $\CM$, we construct a model $\CScone_{\MM^\bullet}$ displayed over $\CM$.
  \begin{itemize}
  \item An object of $\CScone_{\MM^\bullet}$ displayed over $\Gamma \in \CM$ is a family
    \[ \Gamma^\bullet : \CM(1_\CM,\Gamma) \to \SSet. \]
  \item A morphism of $\CScone_{\MM^\bullet}$ from $\Gamma^\bullet$ to $\Delta^\bullet$ displayed over $f \in \CM(\Gamma,\Delta)$ is a family
    \[ f^\bullet : \forall \gamma \to \Gamma^\bullet(\gamma) \to \Gamma^\bullet(f \circ \gamma). \]
  \item A type of $\CScone_{\MM^\bullet}$ over $\Gamma^\bullet$ displayed over $A : \CM.\Ty_n(\Gamma)$ is a family
    \[ A^\bullet : \forall \gamma \to \Gamma^\bullet(\gamma) \to \Ty^\bullet(A[\gamma]). \]
  \item A term of $\CScone_{\MM^\bullet}$ of type $A^\bullet$ displayed over $a : \CM.\Tm(\Gamma,A)$ is a family
    \[ a^\bullet : \forall \gamma \to (\gamma^\bullet : \Gamma^\bullet(\gamma)) \to \Tm^\bullet(A^\bullet(\gamma^\bullet), a[\gamma]). \]
  \item The substitution actions on types and terms are defined by function composition:
    \begin{alignat*}{1}
      & A^\bullet[f^\bullet] \triangleq \lambda \gamma^\bullet \mapsto A^\bullet(f^\bullet(\gamma^\bullet)), \\
      & a^\bullet[f^\bullet] \triangleq \lambda \gamma^\bullet \mapsto a^\bullet(f^\bullet(\gamma^\bullet)).
    \end{alignat*}
  \item The displayed empty context $\diamond^\bullet$ and extended contexts are given by singleton sets and dependent sums:
    \begin{alignat*}{1}
      & \diamond^\bullet \triangleq \lambda \_ \mapsto \{\star\}, \\
      & (\Gamma^\bullet.A^\bullet) \triangleq \lambda (\gamma,a) \mapsto (\gamma^\bullet : \Gamma^\bullet(\gamma)) \times (a^\bullet : A^\bullet(\gamma^\bullet,a)).
    \end{alignat*}
  \item All type-theoretic structures are defined pointwise using the corresponding operation from $\MM^\bullet$.
    \begin{alignat*}{1}
      & \CScone_{\MM^\bullet}.\Unit(\Gamma^\bullet) \triangleq \lambda \gamma^\bullet \mapsto \Unit^\bullet, \\
      & \CScone_{\MM^\bullet}.\Pi(\Gamma^\bullet,A^\bullet,B^\bullet) \triangleq \lambda \gamma^\bullet \mapsto \Pi^\bullet(A^\bullet(\gamma^\bullet), \lambda a^\bullet \mapsto B^\bullet(\gamma^\bullet,a^\bullet)), \\
      & \CScone_{\MM^\bullet}.\lam(\Gamma^\bullet,b^\bullet) \triangleq \lambda \gamma^\bullet \mapsto \lam^\bullet(\lambda a^\bullet \mapsto B^\bullet(\gamma^\bullet,a^\bullet)), \\
      & \CScone_{\MM^\bullet}.\ua(\Gamma^\bullet,A^\bullet) \triangleq \lambda \gamma^\bullet \mapsto \ua^\bullet(A^\bullet(\gamma^\bullet)), \\
      & \dots
    \end{alignat*}
  \item All naturality conditions follow simply from associativity of function composition.
    \qedhere
  \end{itemize}
\end{construction}

\subsection{Relational equivalences}\label{ssec:rel_equivs}

Let $\MM$ be a HoTT-family.
We define relational equivalences (also known as one-to-one correspondences) and reflexivity structures, which will be used to define the path and reflexive-loop models of HoTT.
Relational equivalences are equivalent to other definitions of equivalences in HoTT (\eg half-adjoint equivalences).
A self-equivalence has a reflexivity structure when it is homotopic to the identity equivalence.
The definition of these structures as families of types together with contractibility conditions permits the definition of models corresponding to parametricity translations (the families of types are then seen as logical relations).

\begin{defi}
  Given types $A_1, A_2 : \MM_n$, a \defemph{relational equivalence} $A_e : \RelEquiv(A_1, A_2)$ consists of a type-valued relation
  \begin{alignat*}{1}
    & A_e : A_1 \to A_2 \to \MM_n,
  \end{alignat*}
  and families of contractibility proofs witnessing that $A_e$ is functional in both directions
  \begin{alignat*}{1}
    & A_e.\funr : (a_1 : A_1) \to \isContr((a_2 : A_2) \times A_e(a_1,a_2)), \\
    & A_e.\funl : (a_2 : A_2) \to \isContr((a_1 : A_1) \times A_e(a_1,a_2)).
      \tag*{\qedhere}
  \end{alignat*}
\end{defi}

\begin{defi}
  A \defemph{reflexivity structure} $A_r : \isRefl(A_e)$ over an equivalence $A_e : \RelEquiv(A,A)$ consists of a family
  \begin{alignat*}{1}
    & A_r : (a : A) \to A_e(a,a) \to \MM_n,
  \end{alignat*}
  along with a family of contractibility proofs witnessing the unique existence of a reflexivity loop
  \begin{alignat*}{1}
    & A_r.\refl : (a : A) \to \isContr((a_e : A_e(a,a)) \times A_r(a,a_e)).
      \tag*{\qedhere}
  \end{alignat*}
\end{defi}

\begin{construction}
  Given a relational equivalence $A_e : \RelEquiv(A_1, A_2)$ and elements $x_e : A_e(x_1,x_2)$ and $y_e : A_e(y_1,y_2)$, there is a relational equivalence $\Id^{\RelEquiv}(A_e,x_e,y_e) : \RelEquiv(\Id_{A_1}(x_1,y_1), \Id_{A_2}(x_2,y_2))$, defined by
  \[ \Id^{\RelEquiv}(A_e,x_e,y_e) \triangleq \lambda \refl\ \refl \mapsto \Id_{A_e(x_1,x_2)}(x_e,y_e).
  \]

  When $A_e$ is a self-equivalence with a reflexivity structure $A_r : \isRefl(A_e)$ and we have elements $x_r : A_r(x,x_e)$ and $y_r : A_r(y,y_e)$, there is a reflexivity structure $\Id^{\isRefl}(A_r,x_r,y_r) : \isRefl(\Id^{\RelEquiv}(A_e,x_e,y_e))$, defined by
  \[ \Id^{\isRefl}(A_r,x_r,y_r) \triangleq \lambda \refl\ \refl \mapsto \Id_{A_r(x,x_e)}(x_r,y_r).
    \tag*{\qedhere{}}
  \]
\end{construction}

\begin{construction}
  The universe $\UU_n$ has a reflexive relational equivalence, given by:
  \begin{alignat*}{1}
    & \UU^\RelEquiv \triangleq \lambda A\ B \to \RelEquiv(A,B), \\
    & \UU^\isRefl \triangleq \lambda A\ E \to \isRefl(A,E).
  \end{alignat*}
  The contractibility conditions follow from univalence.
\end{construction}

Relational equivalences and reflexivity structures are also preserved by the other type formers ($\Sigma$-, $\Pi$-, $W$-, boolean, empty and unit types).
For details see the Agda formalization.

\subsection{Path and reflexive loop models}\label{ssec:path_and_reflloop_models}

We construct path and reflexive-loop models of HoTT.
These are instances of homotopical inverse diagram models~\parencite{HomotopicalInverseDiagrams}, indexed respectively by the homotopical inverse categories
\[ \begin{tikzcd}
    & E
    \ar[ld, "\pi_1"']
    \ar[rd, "\pi_2"]
    \\
    V_1
    && V_2
  \end{tikzcd} \text{and}
 \begin{tikzcd}
    & R \ar[r]
    \ar[r, "p_e"]
    & E
    \ar[r, "p_1", shift left]
    \ar[r, "p_2"', shift right]
    & V.
  \end{tikzcd} \]
They are also closely related to the univalent parametricity translation of~\textcite{MarriageUnivalenceParametricity}.

We only define these models for a democratic base model, although variants of the constructions exist for an arbitrary base model.

\begin{construction}
  For any democratic model $\CM$ of HoTT, we construct another model $\Path_\CM$, called the \defemph{path-model} of $\CM$.
  We define it as a displayed model over $\CM \times \CM$.
  \begin{itemize}
  \item An object of $\Path_\CM$ displayed over $\Gamma_1,\Gamma_2$ is an equivalence
    \begin{alignat*}{1}
      & \Gamma_e \Colon \RelEquiv(\Gamma_1, \Gamma_2).
    \end{alignat*}
  \item A morphism of $\Path_\CM$ from $\Gamma_e$ to $\Delta_e$ displayed over $f_1 \Colon \Gamma_1 \to \Delta_1$ and $f_2 \Colon \Gamma_2 \to \Delta_2$ is a function
    \begin{alignat*}{1}
      & f_e \Colon \Gamma_e(\gamma_1,\gamma_2) \to \Delta_e(f_1(\gamma_1),f_2(\gamma_2)).
    \end{alignat*}
  \item A type of $\Path_\CM$ over $\Gamma_e$ and displayed over $A_1 \Colon \Gamma_1 \to \UU_n$ and $B_1 \Colon \Gamma_2 \to \UU_n$ is a family of equivalences
    \begin{alignat*}{1}
      & A_e \Colon (\gamma_e:\Gamma_e(\gamma_1,\gamma_2)) \to \RelEquiv(A_1(\gamma_1), A_2(\gamma_2)).
    \end{alignat*}
  \item A term of $\Path_\CM$ of type $A_e$ and displayed over $a_1 \Colon (\gamma_1:\Gamma_1) \to A_1(\gamma_1)$ and $B_1 \Colon (\gamma_2:\Gamma_2) \to A_2(\gamma_2)$ is a family
    \begin{alignat*}{1}
      & a_e \Colon (\gamma_e:\Gamma_e(\gamma_1,\gamma_2)) \to A_e(a_1(\gamma_1),a_2(\gamma_2)).
    \end{alignat*}
  \item The type formers are interpreted pointwise over $\gamma_e : \Gamma_e(\gamma_1,\gamma_2)$ using the constructions of~\autoref{ssec:rel_equivs}.
    For example,
    \begin{alignat*}{1}
      & \Path_\CM.\Id_{A_e}(x_e,y_e) \triangleq \lambda \gamma_e \mapsto \Id^\RelEquiv(A_e(\gamma_e),x_e(\gamma_e),y_e(\gamma_e)).
    \end{alignat*}
  \item The rest of the structure corresponds to a standard binary parametricity construction.
    \qedhere
  \end{itemize}
\end{construction}

The \defemph{loop-model} $\Loop_\CM$ of a model $\CM$ is the pullback
\[ \begin{tikzcd}
    \Loop_\CM
    \ar[rd, phantom, very near start, "\lrcorner"]
    \ar[r]
    \ar[d, "\pi", two heads]
    &
    \Path_\CM
    \ar[d, "{\angles{\pi_1,\pi_2}}", two heads]
    \\
    \CM
    \ar[r, "\angles{\id,\id}"]
    &
    \CM \times \CM
  \end{tikzcd} \]

\begin{construction}
  For any democratic model $\CM$ of HoTT, we construct another model $\ReflLoop_\CM$, called the \defemph{reflexive-loop-model} of $\CM$.
  We define it as a displayed model over $\Loop_\CM$.
  \begin{itemize}
  \item An object of $\ReflLoop_\CM$ displayed over $\Gamma,\Gamma_e$ is a reflexivity structure
    \begin{alignat*}{1}
      & \Gamma_r \Colon \isRefl(\Gamma_e).
    \end{alignat*}
  \item A morphism of $\ReflLoop_\CM$ from $\Gamma_r$ to $\Delta_r$ and displayed over $f,f_e$ is a map
    \begin{alignat*}{1}
      & f_r \Colon \Gamma_r(\gamma,\gamma_e) \to \Delta_r(f(\gamma),f_e(\gamma_e)).
    \end{alignat*}
  \item A type of $\ReflLoop_\CM$ displayed over $A \Colon \Gamma \to \UU_n$ and $A_e \Colon \forall \gamma\ \gamma_e \to \RelEquiv(A(\gamma), A(\gamma))$ is a family of reflexivity structures
    \begin{alignat*}{1}
      & A_r \Colon (\gamma_r:\Gamma_r(\gamma,\gamma_e)) \to \isRefl(A_e(\gamma_e)).
    \end{alignat*}
  \item A term of $\ReflLoop_\CM$ of type $A_r$ displayed over $a \Colon (\gamma:\Gamma) \to A(\gamma)$ and $a_e \Colon \forall \gamma\ \gamma_e \to A_e(\gamma_e,a(\gamma),a(\gamma))$ is a family of reflexivity structures
    \begin{alignat*}{1}
      & a_r \Colon (\gamma_r:\Gamma_r(\gamma,\gamma_e)) \to A_r(a(\gamma),a_e(\gamma_e)).
    \end{alignat*}
  \item The type formers are interpreted using the constructions of~\autoref{ssec:rel_equivs}.
  \item The rest of the structure corresponds to a standard parametricity construction.
    \qedhere
  \end{itemize}
\end{construction}

\begin{prop}\label{prop:hott_pi12_split_fib}
  The projection $\angles{\pi_1,\pi_2} : \Path_\CM \to \CM \times \CM$ is a split fibration.
\end{prop}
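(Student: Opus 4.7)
The goal is to verify the identification lifting property for $F = \angles{\pi_1,\pi_2}$. First I would unfold the data: we have a displayed context $\Gamma_e$ over $(\Gamma_1,\Gamma_2)$, a displayed type $A_e(\gamma_e) \Colon \RelEquiv(A_1(\gamma_1), A_2(\gamma_2))$, a displayed term $x_e(\gamma_e) \Colon A_e(\gamma_e, x_1(\gamma_1), x_2(\gamma_2))$, and an identification in the base $\CM \times \CM$ consisting of a pair $(p_1, p_2)$ with $p_1 \Colon \Id_{A_1}(x_1, y_1)$ and $p_2 \Colon \Id_{A_2}(x_2, y_2)$. The task is to produce a displayed endpoint $y_e(\gamma_e) \Colon A_e(\gamma_e, y_1(\gamma_1), y_2(\gamma_2))$ and a displayed identification $p_e(\gamma_e)$ of type $\Id^{\RelEquiv}(A_e(\gamma_e), x_e(\gamma_e), y_e(\gamma_e))(p_1, p_2)$ whose base projections under $F$ recover $(y_1, y_2)$ and $(p_1, p_2)$ on the nose.

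The plan is to construct $y_e$ and $p_e$ pointwise in $\gamma_e$ by applying the $\J$-eliminator of the base model $\CM$ twice. Concretely, I would first apply $\J$ to $p_1$ with a motive that quantifies universally over $y_2$ and $p_2 \Colon \Id_{A_2}(x_2, y_2)$, reducing to the case $p_1 = \refl$ where $y_1 = x_1$. In that case I would apply $\J$ again to $p_2$, reducing to the doubly reflexive case where additionally $y_2 = x_2$; there I take $y_e(\gamma_e) \triangleq x_e(\gamma_e)$ and $p_e(\gamma_e) \triangleq \refl$. The latter is well-typed because the definition $\Id^{\RelEquiv}(A_e,x_e,y_e) \triangleq \lambda \refl\ \refl \mapsto \Id_{A_e(x_1,x_2)}(x_e,y_e)$ unfolds definitionally to $\Id_{A_e(\gamma_e, x_1, x_2)}(x_e(\gamma_e), x_e(\gamma_e))$ when both base paths are $\refl$.

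Once these displayed components are defined, the strict equalities $F(y_e) = (y_1, y_2)$ and $F(p_e) = (p_1, p_2)$ required by the definition of split fibration hold by construction: a term of $\Path_\CM$ is specified by fixing its base projection together with displayed data, and I am simply taking the given $(y_1, y_2, p_1, p_2)$ as the base components while using iterated $\J$ only to fill in the displayed families indexed by $\gamma_e$.

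The main obstacle is making sure the iterated $\J$ construction is well-typed on the nose, so that the displayed $p_e$ really inhabits $\Id^{\RelEquiv}(A_e, x_e, y_e)(p_1, p_2)$ rather than merely a type identified with it. This reduces to the definitional $\J\beta$-rule for $\Id$-types holding in any MLTT-family, which is exactly what makes the reflexive-case term well-typed even before the $\J$-eliminator is unfolded; no additional coherence has to be built by hand. This approach also generalizes transparently to the reflexive-loop model $\ReflLoop_\CM$, which is presumably how the analogous fibration statement for $\pi_e$ will be handled.
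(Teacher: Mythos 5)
Your proof is correct and follows essentially the same route as the paper's: the paper defines $y_e$ by ``transporting $x_e$ over $p_1$ and $p_2$'' and takes $p_e$ to be the canonical witness of that transport, which is exactly your iterated $\J$-elimination reducing to the doubly reflexive case where $y_e = x_e$ and $p_e = \refl$. Your observation that the base components $(y_1,y_2,p_1,p_2)$ are kept on the nose while $\J$ is used only for the displayed data is precisely why the lift is strict, and the well-typedness in the reflexive case indeed rests on the strict $\Jbeta$-rule, which is implicit in the paper's pattern-matching definition of $\Id^{\RelEquiv}$.
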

\begin{proof}
  We first prove that $\angles{\pi_1,\pi_2} : \Path_\CM \to \CM \times \CM$ satisfies the identification lifting property.
  Take a term $x$ of $\Path_\CM$. It consists of an equivalence $\Gamma_e \Colon \RelEquiv(\Gamma_1, \Gamma_2)$, a family $A_e \Colon \forall \gamma_e \to \RelEquiv(A_1(\gamma_1), A_2(\gamma_2))$ of equivalences and a family $x_e \Colon \forall \gamma_e \to A_e(\gamma_e,x_1(\gamma_1),x_2(\gamma_2))$.
  Take an identification $p$ in $\CM \times \CM$ between $\angles{\pi_1,\pi_2}(x)$ and a term $y$.
  It consists of $p_1 \Colon \forall \gamma_1 \to \Id_{A_1(\gamma_1)}(x_1(\gamma_1),y_1(\gamma_1))$ and $p_2 \Colon \forall \gamma_2 \to \Id_{A_2(\gamma_2)}(x_2(\gamma_2),y_2(\gamma_2))$.
  We then define $y_e \Colon \forall \gamma_e \to A_e(\gamma_e,y_1(\gamma_1),y_2(\gamma_2))$ by transporting $x_e$ over $p_1$ and $p_2$.
  We obtain $p_e \Colon \forall \gamma_e \to \Id^{\RelEquiv}(A_e,x_e,y_e)(p_1(\gamma_1),p_2(\gamma_2))$ as a witness of the fact that $y_e$ is a transport of $x_e$ over $p_1$ and $p_2$.
  Then $(y_e,p_e)$ is a lift of the identification $(y,p)$ against $\angles{\pi_1,\pi_2}$.
  Thus $\angles{\pi_1,\pi_2}$ is a split fibration.
\end{proof}

\begin{prop}\label{prop:hott_pi1_split_tfib}
  The projections $\pi_1,\pi_2 : \Path_\CM \to \CM$ are split trivial fibrations.
\end{prop}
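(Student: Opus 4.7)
The plan is to verify the term lifting property defining split trivial fibrations for $\pi_1$; the case of $\pi_2$ is entirely symmetric, swapping the roles of $\funr$ and $\funl$. Unfolding the construction of $\Path_\CM$, a term of $\Path_\CM$ of type $A_e$ over $\Gamma_e$ is a triple $(a_1, a_2, a_e)$ with $a_e \Colon (\gamma_e : \Gamma_e(\gamma_1,\gamma_2)) \to A_e(\gamma_e, a_1(\gamma_1), a_2(\gamma_2))$, and $\pi_1$ extracts the first component. So given $\Gamma_e \Colon \RelEquiv(\Gamma_1,\Gamma_2)$, a type $A_e$ of $\Path_\CM$ over $\Gamma_e$ with $\pi_1(A_e) = A_1$, and a term $a_1 \Colon (\gamma_1 : \Gamma_1) \to A_1(\gamma_1)$ in $\CM$, I must construct $a_2$ and $a_e$ so that $(a_1, a_2, a_e)$ literally projects to $a_1$ under $\pi_1$.

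The construction exploits the two contractibility witnesses built into the data. For each $\gamma_2 \Colon \Gamma_2$, the center of $\Gamma_e.\funl(\gamma_2)$ supplies a canonical pair $(\gamma_1^\star, \gamma_e^\star) \Colon (\gamma_1 : \Gamma_1) \times \Gamma_e(\gamma_1, \gamma_2)$, and the center of $A_e(\gamma_e^\star).\funr(a_1(\gamma_1^\star))$ supplies a canonical pair whose first component defines $a_2(\gamma_2) \Colon A_2(\gamma_2)$. For a general $\gamma_e \Colon \Gamma_e(\gamma_1, \gamma_2)$, contractibility of $(\gamma_1' : \Gamma_1) \times \Gamma_e(\gamma_1', \gamma_2)$ supplies a path identifying $(\gamma_1, \gamma_e)$ with $(\gamma_1^\star, \gamma_e^\star)$; transporting the second component of the second center along this path yields the desired $a_e(\gamma_e) \Colon A_e(\gamma_e, a_1(\gamma_1), a_2(\gamma_2))$. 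The equation $\pi_1(a_1, a_2, a_e) = a_1$ holds strictly, since the first component is never touched.

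The main obstacle is not deep but organisational: one must verify that the chosen $a_2$ and $a_e$ are compatible with substitution and with the interpretation of the type-theoretic operations on $\Path_\CM$. Since every ingredient is a canonical center of contraction and the relational-equivalence data packaged into $\Gamma_e$ and $A_e$ is preserved on the nose by the morphism actions, this reduces to standard transport manipulations, entirely parallel to those used in the binary parametricity translation underlying the construction of $\Path_\CM$. No ingredients beyond the $\funr$ and $\funl$ data already present in the definition of a relational equivalence are needed.
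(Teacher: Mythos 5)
Your proposal is correct and follows the same route as the paper's proof, which simply says that $x_2$ is defined ``by transport over the equivalences $\Gamma_e$ and $A_e$'' and that $x_e$ witnesses this transport; your use of the centers of $\Gamma_e.\funl$ and $A_e.\funr$ together with a transport along the contractibility path is exactly the unfolding of that one-line construction. The strictness of the lift ($\pi_1$ never touching the first component) is likewise the point the paper relies on.
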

\begin{proof}
  We prove that $\pi_1$ satisfies the term lifting property, the case of $\pi_2$ is symmetric.
  Take a type $A$ in $\Path_\CM$.
  It consists of an equivalence $\Gamma_e \Colon \RelEquiv(\Gamma_1, \Gamma_2)$ and a family $A_e \Colon \forall \gamma_e \to \RelEquiv(A_1(\gamma_1), A_2(\gamma_2))$ of equivalences.
  Take a term of type $\pi_1(A)$ in $\CM$, \ie a term $x_1 \Colon \forall \gamma_1 \to A_1(\gamma_1)$.
  We then define a term $x_2 \Colon \forall \gamma_2 \to A_2(\gamma_2)$ by transport over the equivalences $\Gamma_e$ and $A_e$.
  We have an element $x_e \Colon \forall \gamma_e \to A_e(\gamma_e,x_1(\gamma_1),x_2(\gamma_2))$ witnessing that $x_2$ was defined by transporting $x_1$.
  Then $(x_1,x_2,x_e)$ is a lift of the term $x_1$ along $\pi_1$.
  Thus $\pi_1$ is a split trivial fibration.
\end{proof}

\begin{prop}
  The projection $\pi_e : \ReflLoop_\CM \to \Loop_\CM$ is a split fibration.
\end{prop}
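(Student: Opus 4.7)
The plan is to verify the identification lifting property for $\pi_e : \ReflLoop_\CM \to \Loop_\CM$, following the same pattern as the proof that $\angles{\pi_1,\pi_2} : \Path_\CM \to \CM \times \CM$ is a split fibration. A term of $\ReflLoop_\CM$ displayed over $(\Gamma,\Gamma_e,\Gamma_r)$, $(A,A_e,A_r)$, and $(a,a_e) \in \Loop_\CM$ is a family $a_r \Colon (\gamma_r : \Gamma_r(\gamma,\gamma_e)) \to A_r(\gamma_r,a(\gamma),a_e(\gamma_e))$. I would start by unpacking this data.

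Next I would unpack an identification $p$ in $\Loop_\CM$ between $\pi_e(a_r) = (a,a_e)$ and a term $(b,b_e)$. Since $\Loop_\CM$ is the pullback of $\Path_\CM$ along $\angles{\id,\id}$, such an identification decomposes into $p_1 \Colon \forall \gamma \to \Id_{A(\gamma)}(a(\gamma),b(\gamma))$ in $\CM$ together with a family $p_e \Colon \forall \gamma_e \to \Id^\RelEquiv(A_e(\gamma_e),a_e(\gamma_e),b_e(\gamma_e))(p_1(\gamma),p_1(\gamma))$ lying over $(p_1,p_1)$, as dictated by the identification types of $\Path_\CM$.

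I would then define $b_r \Colon (\gamma_r : \Gamma_r(\gamma,\gamma_e)) \to A_r(\gamma_r,b(\gamma),b_e(\gamma_e))$ by transporting $a_r(\gamma_r)$ along $p_1$ and $p_e$. The witness of this transport, read off from the construction of $\Id^\isRefl$, supplies a family $p_r \Colon (\gamma_r : \Gamma_r(\gamma,\gamma_e)) \to \Id^\isRefl(A_r(\gamma_r),a_r(\gamma_r),b_r(\gamma_r))(p_e(\gamma_e),p_e(\gamma_e))$. The pair $(b_r,p_r)$ is then, by definition of identification types in $\ReflLoop_\CM$, the required lift of $(b_e,p_e)$ against $\pi_e$.

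The only delicate point is bookkeeping: the type $A_r$ is dependent on both $a$ and $a_e$, so defining $b_r$ requires simultaneous transport along $p_1$ and $p_e$, and one must check that the resulting term indeed inhabits $A_r(\gamma_r,b(\gamma),b_e(\gamma_e))$ and that its image under $\pi_e$ agrees strictly with $b_e$. Since $\Id^\isRefl$ is defined on reflexive inputs as an ordinary identity type in the underlying type $A_r(x,x_e)$, this reduces to a standard HoTT transport that exists essentially by construction, so I do not expect any real obstacle beyond careful notational tracking.
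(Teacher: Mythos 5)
Your proposal is correct and matches the paper's intended argument: the paper proves this proposition simply by noting it is ``similar to'' the proof that $\angles{\pi_1,\pi_2} : \Path_\CM \to \CM \times \CM$ is a split fibration, namely lifting an identification by transporting the displayed datum and using the transport witness as the lifted identification. Your write-up carries out exactly that analogous transport argument for $A_r$ over $\Loop_\CM$, with the correct strict compatibility under $\pi_e$.
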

\begin{proof}
  Similar to~\autoref{prop:hott_pi12_split_fib}.
\end{proof}

\begin{prop}
  The projection $\pi : \ReflLoop_\CM \to \CM$ is a split trivial fibration.
\end{prop}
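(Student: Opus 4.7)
The plan is to verify the term lifting property, proceeding in close analogy with \autoref{prop:hott_pi1_split_tfib}. Given a type $A^{\mathsf{RL}} = (A, A_e, A_r)$ in $\ReflLoop_\CM$ over a context $(\Gamma, \Gamma_e, \Gamma_r)$ and a term $a : \CM.\Tm(\Gamma, A)$ in the base model, I must construct a term $(a, a_e, a_r)$ of $\ReflLoop_\CM$ projecting strictly to $a$ under $\pi$. Concretely this means producing a family $a_e \Colon \forall \gamma\ \gamma_e \to A_e(\gamma_e, a(\gamma), a(\gamma))$ of relational-equivalence witnesses and a family $a_r \Colon \forall \gamma\ \gamma_e\ \gamma_r \to A_r(\gamma_r, a(\gamma), a_e(\gamma_e))$ of reflexivity witnesses.

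My plan is to obtain both families by transporting $a$ along the contractibility data carried by $A_e$ and $A_r$, mirroring the way the $\pi_1$-proof transports $x_1$ along $\Gamma_e$ and $A_e$ to produce $x_2$ together with the witness $x_e$. The crucial ingredient is the reflexivity contractibility
\[ A_r(\gamma_r).\refl(a(\gamma)) : \isContr\bigl((a_e : A_e(\gamma_e, a(\gamma), a(\gamma))) \times A_r(\gamma_r, a(\gamma), a_e)\bigr), \]
whose center of contraction provides canonical data for $a_e$ (as the first projection) and $a_r$ (as the second). Once $(a_e, a_r)$ is in hand, the equality $\pi((a, a_e, a_r)) = a$ is immediate from the definition of $\pi$.

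The main obstacle is confirming that the extracted $a_e$ can be arranged to depend only on $\gamma$ and $\gamma_e$, not on $\gamma_r$, since its declared type has no $\gamma_r$ argument while the reflexivity contractibility above does. I expect this to be handled exactly as in the $\pi_1$-proof: the value of $a_e(\gamma, \gamma_e)$ is pinned down canonically by the first projection of the center, and the companion contractibility $A_e(\gamma_e).\funr(a(\gamma))$, together with the uniqueness guaranteed by contractibility, ensures that this choice is determined independently of any auxiliary $\gamma_r$. Once $a_e$ is fixed in this way, $a_r(\gamma, \gamma_e, \gamma_r)$ is then produced as the second projection of the center of $A_r(\gamma_r).\refl(a(\gamma))$ at the prescribed $a_e$, completing the lift and establishing that $\pi$ is a split trivial fibration.
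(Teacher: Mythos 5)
Your starting point is right, and you correctly flag the delicate point, but the resolution you propose does not work. The center of $A_r(\gamma_r).\refl(a(\gamma))$ is only available when a reflexivity witness $\gamma_r$ exists, whereas $a_e$ must be defined at \emph{every} $\gamma_e : \Gamma_e(\gamma_1,\gamma_2)$ --- in particular at loops carrying no $\gamma_r$ whatsoever, where nothing in the definition of a type of $\ReflLoop_\CM$ constrains $A_e(\gamma_e)$ except indirectly. The companion contractibility you invoke, $A_e(\gamma_e).\funr(a(\gamma_1))$, cannot repair this: it asserts contractibility of $(a_2 : A(\gamma_2)) \times A_e(\gamma_e, a(\gamma_1), a_2)$, whose center has a \emph{free} right endpoint $a_2$, so it neither produces nor pins down an element of the diagonal $A_e(\gamma_e, a(\gamma), a(\gamma))$. (If it did, $\Loop_\CM \to \CM$ would already be a split trivial fibration, which it is not: a type of $\Loop_\CM$ may pair $A$ with a self-equivalence that is nowhere reflexive.)

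The missing ingredient is the contractibility at the level of the \emph{context}, which is exactly what the ``transport over the equivalences $\Gamma_e$ and $A_e$'' step of \autoref{prop:hott_pi1_split_tfib} corresponds to here. The structure $\Gamma_r.\refl(\gamma_1)$ supplies a canonical point $(\gamma_e^{r}, \gamma_r^{r})$ with $\gamma_e^{r} : \Gamma_e(\gamma_1,\gamma_1)$, and $\Gamma_e.\funr(\gamma_1)$ says that $(\gamma_2 : \Gamma) \times \Gamma_e(\gamma_1,\gamma_2)$ is contractible; hence an arbitrary $(\gamma_2,\gamma_e)$ is identified with $(\gamma_1,\gamma_e^{r})$. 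One therefore defines $(a_e, a_r)$ first at the canonical point, as the center of $A_r(\gamma_r^{r}).\refl(a(\gamma_1))$ --- which is where your key ingredient genuinely applies --- and then extends to arbitrary $\gamma_e$ (and to arbitrary $\gamma_r$, using $\Gamma_r.\refl$ once more) by transport along these identifications. With that step added, the argument goes through and matches the paper's intended proof by analogy with \autoref{prop:hott_pi1_split_tfib}.
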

\begin{proof}
  Similar to~\autoref{prop:hott_pi1_split_tfib}.
\end{proof}

\begin{prop}
  The constructions of $\Path_\CM$ and $\ReflLoop_\CM$ are functorial in $\CM$.
\end{prop}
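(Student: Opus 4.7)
The plan is to mirror the proof of the analogous statement for categories, where functoriality was obtained by observing that $\Path_\CC$ and $\ReflLoop_\CC$ arise from morphisms of essentially algebraic theories $\Th_\CCat \to \Th_\CCat$. Here the corresponding observation is that every component of $\Path_\CM$ and $\ReflLoop_\CM$ is built uniformly from the structure of $\CM$ using only the type-theoretic operations of HoTT. Concretely, all auxiliary constructions from~\autoref{ssec:rel_equivs}---namely $\RelEquiv$, $\isRefl$, $\Id^\RelEquiv$, $\Id^\isRefl$, the reflexive relational equivalence on $\UU_n$, and the analogous structure for $\Sigma$-, $\Pi$-, $W$-, boolean, empty and unit types---are expressible using only $\Sigma$-types, $\Pi$-types, $\Id$-types, universes, and the univalence structure. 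Hence the construction of $\Path_\CM$ is induced by a morphism $P : \Th_{\CCwf^{\mathrm{HoTT,dem}}} \to \Th_{\CCwf^{\mathrm{HoTT,dem}}}$ of generalized algebraic theories for democratic models of HoTT, and similarly $\ReflLoop_\CM$ comes from a morphism $RL$.

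Given a homomorphism $F : \CM \to \CN$ of democratic models, I would define $\Path_F : \Path_\CM \to \Path_\CN$ by applying $F$ pointwise at each sort: an object $\Gamma_e \Colon \RelEquiv(\Gamma_1,\Gamma_2)$ is sent to $F(\Gamma_e) \Colon \RelEquiv(F(\Gamma_1),F(\Gamma_2))$ (using that $F$ preserves $\Sigma$-types, $\Pi$-types and $\Id$-types, so that $F(\RelEquiv(\Gamma_1,\Gamma_2)) = \RelEquiv(F(\Gamma_1),F(\Gamma_2))$ up to the canonical identifications coming from homomorphism preservation); likewise for morphisms, types and terms. The action of $\ReflLoop_F$ is defined in the same way, sending $\Gamma_r \Colon \isRefl(\Gamma_e)$ to $F(\Gamma_r) \Colon \isRefl(F(\Gamma_e))$. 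Functoriality in $F$ (preservation of composition and identities) is immediate from the corresponding properties of $F$ itself.

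The one nontrivial verification is that $\Path_F$ and $\ReflLoop_F$ are genuine morphisms of models, \ie that they commute with all the type formers interpreted in $\Path_\CM$ and $\ReflLoop_\CM$. But the interpretations of the type formers in $\Path_\CM$ are themselves defined pointwise over $\gamma_e$ using the constructions of~\autoref{ssec:rel_equivs}, which are in turn applications of the type formers of $\CM$. Since $F$ preserves these type formers strictly, the interpretations commute with $F$ by an interchange argument. This is the step that would require the bulk of the writeup, but it is routine: it amounts to systematically expanding each equation of the form $\Path_\CN.(-) \circ \Path_F = \Path_F \circ \Path_\CM.(-)$ and applying preservation of the relevant type former by $F$.

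The main obstacle, such as it is, is purely organizational: carefully setting up the morphisms of generalized algebraic theories $P$ and $RL$ requires formally recording the entire specification of a democratic HoTT-model and checking case by case that each sort, operation, and equation in $\Th_{\CCwf^{\mathrm{HoTT,dem}}}$ is sent to a term built from the source theory. As in the category case, once this bookkeeping is done, functoriality is automatic because homomorphisms of algebras are functorial under morphisms of (generalized) algebraic theories.
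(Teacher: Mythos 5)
Your proposal is correct and follows essentially the same route as the paper: the paper's proof likewise observes that all components of $\Path_\CM$ and $\ReflLoop_\CM$ are expressed in the language of HoTT and invokes functorial semantics, realizing the constructions as precomposition with (left exact) endomorphisms $P, R$ of the essentially algebraic theory $\Th_{\mathsf{HoTT}}^{\dem}$ of democratic models, from which functoriality is immediate. Your additional spelling-out of the pointwise action of a homomorphism $F$ and of the commutation with type formers is exactly the bookkeeping the paper leaves implicit.
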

\begin{proof}
  This follows from the fact that all components of $\Path_\CM$ and $\ReflLoop_\CM$ are expressed in the ``language of HoTT'', \eg as finite limits of components of $\CM$.

  This can be stated more precisely using functorial semantics: the democratic models of HoTT are algebras for an essentially algebraic theory $\Th_{\mathsf{HoTT}}^\dem$.
  The model $\CM$ is a left exact functor $\CM : \Th_{\mathsf{HoTT}}^\dem \to \CSet$.
  We then observe that $\Path_\CM = \CM \circ P$ and $\ReflLoop_\CM = \CM \circ R$ for some left exact functors $P, R : \Th_{\mathsf{HoTT}}^\dem \to \Th_{\mathsf{HoTT}}^\dem$ (which can be constructed using the universal property of $\Th_{\mathsf{HoTT}}^\dem$).
  The functoriality is then immediate.
\end{proof}

\begin{prop}\label{prop:hott_tcof_is_weq}
  Let $F : \CM \to \CN$ be a morphism between democratic models of HoTT.
  If $\pi : \ReflLoop_\CM \to \CM$ admits a section $r$ and $F$ is an algebraic trivial cofibration, then $F$ is a split weak equivalence.
\end{prop}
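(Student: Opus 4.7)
The plan is to mirror the proof of \autoref{prop:cat_tcof_is_weq} step by step, using the analogous path and reflexive-loop constructions for HoTT together with the $2$-out-of-$3$ and retract closure properties already established in \autoref{prop:hott_two_out_of_three} and \autoref{prop:hott_weq_retracts}.

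First, I would form the pullback $\Path_\CN[F \times \id]$ of the span $\CM \times \CN \xrightarrow{F \times \id} \CN \times \CN \xleftarrow{\angles{\pi_1,\pi_2}} \Path_\CN$ in the category of models of HoTT. Then I would construct the composite
\[ r' : \CM \xrightarrow{r} \ReflLoop_\CM \xrightarrow{\pi_e} \Path_\CM \xrightarrow{\Path_F} \Path_\CN \]
and lift it through the pullback using $\id_\CM$ on the first factor and $F$ on the second factor, giving a morphism $r' : \CM \to \Path_\CN[F \times \id]$ satisfying $\pi_1 \circ r' = \id_\CM$ and $\pi_2 \circ r' = F$. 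This uses the functoriality of $\Path$ in $\CM$ and the fact that $r$ is a section of $\pi : \ReflLoop_\CM \to \CM$.

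Next, I would observe that $\pi_1 : \Path_\CN[F \times \id] \to \CM$ is a pullback of the split trivial fibration $\pi_1 : \Path_\CN \to \CN$ of \autoref{prop:hott_pi1_split_tfib}, hence itself a split trivial fibration (pullback stability of term lifting is immediate). Since split trivial fibrations are in particular split weak equivalences, and $\pi_1 \circ r' = \id_\CM$ is a split weak equivalence, \autoref{prop:hott_two_out_of_three} implies that $r'$ is a split weak equivalence. Similarly, $\pi_2 : \Path_\CN[F \times \id] \to \CN$ is the composition of the pullback projection $\Path_\CN[F \times \id] \to \CM \times \CN$ (a pullback of the split fibration $\angles{\pi_1,\pi_2}$ of \autoref{prop:hott_pi12_split_fib}) with the projection $\CM \times \CN \to \CN$ (a split fibration since its identification lifting is trivially given by the second component), hence $\pi_2$ is a split fibration.

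Finally, since $F$ is an algebraic trivial cofibration and $\pi_2$ is a split fibration, the factorisation $F = \pi_2 \circ r'$ admits a diagonal filler against $\pi_2$, and the standard retract argument exhibits $F$ as a retract of $r'$. By \autoref{prop:hott_weq_retracts}, split weak equivalences are closed under retracts, so $F$ is a split weak equivalence. The main obstacle I expect is the bookkeeping for the pullback $\Path_\CN[F \times \id]$ and the verification that the relevant lifting properties are stable under pullback in the category of models of HoTT; these however only involve lifting conditions on terms, so the verifications reduce to sortwise arguments analogous to those in the categorical case.
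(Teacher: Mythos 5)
Your proof is correct and follows essentially the same route as the paper, which simply transports the argument of \autoref{prop:cat_tcof_is_weq} (pullback of the path model along $F \times \id$, $2$-out-of-$3$, and the retract argument) to models of HoTT. The only cosmetic difference is that you factor $r'$ through $\Path_F$ before lifting into the pullback, whereas the paper writes the map into $\Path_\CN[F\times\id]$ directly; the content is identical.
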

\begin{proof}
  Same proof as~\autoref{prop:cat_tcof_is_weq}.
\end{proof}


\section{Strict Rezk completions for models of HoTT}\label{sec:completion_hott}

For most of this section, we work internally to $\CcSet$.
The structure of the construction is the same as in~\autoref{sec:completion_categories},
and most of the lemmas have the exact same proofs.
In such case we omit the proof and refer to the corresponding proof in~\autoref{sec:completion_categories}.

We say that a model $\CM$ of HoTT has fibrant components if for every $A : \CM.\Ty_n(\Gamma)$, the set $\CM.\Tm(\Gamma,A)$ is fibrant.
Note that as a special case, the sets $\CM.\Ty_n(\Gamma)$ are fibrant, since $\CM.\Ty_n(\Gamma) \cong \CM.\Tm(\Gamma,\UU_n)$.

\begin{defi}
  We say that a model $\CM$ of HoTT with fibrant components is \defemph{complete} when:
  \begin{itemize}
  \item For every term $x : \CM.\Tm(\Gamma,A)$, the fibrant set $(y : \CM.\Tm(\Gamma,A)) \times (p : \CM.\Tm(\Gamma,\Id_A(x,y)))$ is contractible.
    \qedhere
  \end{itemize}
\end{defi}

\begin{defi}
  A \defemph{strict Rezk completion} of a global model $\CM$ of HoTT with fibrant components is a global complete model $\overline{\CM}$, along with a morphism $i : \CM \to \overline{\CM}$ such that the external morphism $1^\ast_\square(i) : 1^\ast_\square(\CM) \to 1^\ast_\square(\overline{\CM})$ is a split weak equivalence of models of HoTT.
\end{defi}

\begin{defi}\label{def:hott_glue_structure}
  A \defemph{$\Cof$-fibrancy structure} over a model $\CM$ consists of:
  \begin{itemize}
  \item For every term $x : \CM.\Tm(\Gamma,A)$, an extension structure
    \[ \ext_\Tm(x) : \HasExt((y : \CM.\Tm(\Gamma,A)) \times (p : \CM.\Tm(\Gamma,\Id_A(x,y))). \]
    \qedhere
  \end{itemize}
\end{defi}

A $\Cof$-fibrancy structure can be decomposed into operations
\begin{alignat*}{1}
  & \Glue_{\Tm} : (\Gamma \in \CM) (A : \CM.\Ty(\Gamma)) (x : \CM.\Tm(\Gamma,A)) (\alpha : \Cof) (y : [\alpha] \to \CM.\Tm(\Gamma,A)) \\
  & \quad \to (p : [\alpha] \to \CM.\Tm(\Gamma,\Id_A(x,y))) \to \{ \CM.\Tm(\Gamma,A) \mid \alpha \hra y \}, \\
  & \glue_{\Tm} : (\Gamma \in \CM) (A : \CM.\Ty(\Gamma)) (x : \CM.\Tm(\Gamma,A)) (\alpha : \Cof) (y : [\alpha] \to \CM.\Tm(\Gamma,A)) \\
  & \quad \to (p : [\alpha] \to \CM.\Tm(\Gamma,\Id_A(x,y))) \to \{ \CM.\Tm(\Gamma,\Id_A(x,\Glue_\Tm(x,y,p))) \mid \alpha \hra p \},
\end{alignat*}
with $\angles{\Glue_\Tm,\glue_\Tm} = \ext_\Tm$.

\begin{lem}\label{lem:hott_glue_contr}
  If $\CM$ is $\Cof$-fibrant and a type $A : \CM.\Ty(\Gamma)$ is contractible, then $\CM.\Tm(\Gamma,A)$ is trivially fibrant.
\end{lem}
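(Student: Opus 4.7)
The plan is to use the $\Cof$-fibrancy structure of $\CM$, instantiated at the center of contraction of $A$, to extend arbitrary partial elements of $\CM.\Tm(\Gamma,A)$ to total elements.

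Given a partial element $y : [\alpha] \to \CM.\Tm(\Gamma,A)$, I first extract from the contractibility of $A$ a center $c : \CM.\Tm(\Gamma,A)$ and a (closed) term $h : \CM.\Tm(\Gamma, \Pi(A, \Id_A(c,-)))$. Applying $h$ pointwise under $\alpha$ to the partial element $y$ yields a partial identification $p : [\alpha] \to \CM.\Tm(\Gamma,\Id_A(c,y))$. Together, $(y,p)$ is a partial element of the set $(y' : \CM.\Tm(\Gamma,A)) \times \CM.\Tm(\Gamma,\Id_A(c,y'))$.

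Now I apply the $\Cof$-fibrancy structure of $\CM$ at the term $c$: by $\ext_\Tm(c)$, the set $(y' : \CM.\Tm(\Gamma,A)) \times \CM.\Tm(\Gamma,\Id_A(c,y'))$ admits an extension structure, so I obtain a total element $(y_0,p_0)$ extending $(y,p)$. Projecting out the first component gives a total term $y_0 : \CM.\Tm(\Gamma,A)$ which restricts to $y$ under $\alpha$. Declaring $\ext_{\CM.\Tm(\Gamma,A)}(\alpha,y) \triangleq y_0$ produces the required extension structure on $\CM.\Tm(\Gamma,A)$.

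No real obstacle is expected: the argument is essentially the observation that the $\Cof$-fibrancy structure already bakes in the extension property for based path-spaces, and contractibility lets us reduce the extension problem for $A$ itself to the extension problem for the singleton type at the center of contraction. The only mild care needed is in extracting the partial identification $p$ naturally in $\alpha$, which is immediate because the contractibility witness is a global term and so commutes with the partiality.
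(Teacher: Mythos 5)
Your proof is correct and is essentially the paper's own argument: the paper likewise takes the center of contraction $c$, uses the contractibility witness to produce a partial identification $[\alpha \mapsto (x_0, p_A(x_0))]$, and defines the extension as the first component $\Glue_\Tm(c, [\alpha \mapsto (x_0, p_A(x_0))])$ of the $\Cof$-fibrancy structure at $c$. The only difference is notational (the paper uses the decomposition $\angles{\Glue_\Tm,\glue_\Tm} = \ext_\Tm$ rather than projecting from the pair explicitly).
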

\begin{proof}
  Write $c : \CM.\Tm(\Gamma,A)$ for the center of contraction of $A$.
  Given any $y : \CM.\Tm(\Gamma,A)$, we have an identification $p_A(y) : \CM.\Tm(\Gamma,\Id_A(c,y))$.
  
  Take a partial element $x_0 : [\alpha] \to \CM.\Tm(\Gamma,A)$.
  We then have a total element
  \[ x \triangleq \Glue_\Tm(c, [\alpha \mapsto (x_0, p_A(x_0))]) \]
  extending $x_0$.

  This equips $\CM.\Tm(\Gamma,A)$ with an extension structure, as needed.
\end{proof}

\begin{prop}\label{prop:hott_glue_complete}
  If model $\CM$ with fibrant components is $\Cof$-fibrant, then it is complete.
\end{prop}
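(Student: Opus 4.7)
The plan is to reduce the claim to Proposition~\ref{prop:fibrant_contractibility_iff_ext}, exactly as was done in the categorical analogue Proposition~\ref{prop:cat_glue_complete}. Fix a term $x : \CM.\Tm(\Gamma,A)$; completeness requires me to show that the set
\[ S_x \triangleq (y : \CM.\Tm(\Gamma,A)) \times (p : \CM.\Tm(\Gamma,\Id_A(x,y))) \]
is contractible.

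First I would observe that $S_x$ is fibrant. Since $\CM$ has fibrant components, both $\CM.\Tm(\Gamma,A)$ and, for each $y$, $\CM.\Tm(\Gamma,\Id_A(x,y))$ are fibrant sets, and fibrant sets are closed under $\Sigma$-types (this is a standard fact from the axiomatic setup reviewed in \autoref{sec:background}, following~\cite{UnifyingCubicalModels}). Next, by \autoref{def:hott_glue_structure}, the $\Cof$-fibrancy structure on $\CM$ equips precisely this set $S_x$ with an extension structure $\ext_\Tm(x) : \HasExt(S_x)$. Now \autoref{prop:fibrant_contractibility_iff_ext} gives the logical equivalence $\HasExt(S_x) \lra (\HasWCom_\MI(S_x) \times \isContr(S_x))$, so from trivial fibrancy of $S_x$ (which we have) we conclude contractibility of $S_x$ in particular, as required by the definition of completeness.

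There is no serious obstacle here: the content of the proposition is that the ``trivial-fibrancy'' reformulation of completeness used to define $\Cof$-fibrancy structures is indeed logically stronger than the contractibility-based completeness condition, once fibrancy of components is assumed. The only thing one should double-check is that the fibrancy of $S_x$ really follows from the fibrancy of $\CM$'s components, but this is immediate from closure of fibrant sets under $\Sigma$-types applied to the family $y \mapsto \CM.\Tm(\Gamma,\Id_A(x,y))$ over $\CM.\Tm(\Gamma,A)$.
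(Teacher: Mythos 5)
Your proof is correct and is exactly the argument the paper intends: the paper's proof is the one-line citation of \autoref{prop:fibrant_contractibility_iff_ext}, which you have simply unfolded. (Minor remark: the separate verification that $S_x$ is fibrant is not needed, since the implication $\HasExt(S_x) \to \isContr(S_x)$ in that proposition already supplies both fibrancy and contractibility.)
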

\begin{proof}
  By~\autoref{prop:fibrant_contractibility_iff_ext}.
\end{proof}

Let $\CM$ be a global democratic model of HoTT.
Similarly to the case of categories, we want to prove that the $\Cof$-fibrant replacement $\overline{\CM}$ of $\CM$ a strict Rezk completion.
In order to use~\autoref{lem:fibrancy_from_reflgraph}, we need to show that pseudo-reflexive graphs arising from the pseudo-reflexive graph object
\[ \begin{tikzcd}
    \ReflLoop_{\overline{\CM}}
    \ar[r, "\pi_e"]
    & \Path_{\overline{\CM}}
    \ar[r, "\pi_1", shift left]
    \ar[r, "\pi_2"', shift right]
    & {\overline{\CM}}
  \end{tikzcd} \]
have weak coercion operations and are homotopical.

\begin{defi}
  A \defemph{weak coercion structure} over a line $\Gamma : \MI \to \Ob_\CM$ consists of families
  \begin{alignat*}{1}
    & \wcoe^{r \to s}_\Gamma \Colon \RelEquiv(\Gamma(r), \Gamma(s)), \\
    & \wcoh^{r}_\Gamma \Colon \isRefl(\wcoe^{r \to r}_\Gamma)
  \end{alignat*}
  of equivalences and reflexivity structures.

  Given weak coercion structures $\wcoe_\Gamma$ and $\wcoe_\Delta$, a \defemph{weak coercion structure} over a line $f : (i : \MI) \to \CM(\Gamma(i),\Delta(i))$ consists of families
  \begin{alignat*}{1}
    & \wcoe^{r \to s}_f \Colon \wcoe_\Gamma^{r \to s}(\gamma_1,\gamma_2) \to \wcoe_\Delta^{r \to s}(f(\gamma_1),f(\gamma_2)), \\
    & \wcoh^{r}_f \Colon \wcoh_\Gamma^r(\gamma,\gamma_e) \to \wcoh^r_\Delta(f(\gamma),\wcoe^{r \to r}_f(\gamma_e)).
  \end{alignat*}
  
  Given a weak coercion structure $\wcoe_\Gamma$, a \defemph{weak coercion structure} over a line $f : (i : \MI) \to \CM.\Ty(\Gamma(i))$ consists of families
  \begin{alignat*}{1}
    & \wcoe^{r \to s}_A \Colon \wcoe_\Gamma^{r \to s}(\gamma_1,\gamma_2) \to \RelEquiv(A(r,\gamma_1), A(s,\gamma_2)), \\
    & \wcoh^{r}_A \Colon \wcoh_\Gamma^r(\gamma,\gamma_e) \to \isRefl(\wcoe^{r \to r}_A(\gamma_e)).
  \end{alignat*}

  Given weak coercion structures $\wcoe_\Gamma$ and $\wcoe_A$, a \defemph{weak coercion structure} over a line $a : (i : \MI) \to \CM.\Tm(\Gamma(i),A(i))$ consists of families
  \begin{alignat*}{1}
    & \wcoe^{r \to s}_a \Colon (\gamma_e : \wcoe_\Gamma^{r \to s}(\gamma_1,\gamma_2)) \to \wcoe_A^{r \to s}(\gamma_e,a(r),a(s)), \\
    & \wcoh^{r}_a \Colon (\gamma_r : \wcoh_\Gamma^r(\gamma,\gamma_e)) \to \wcoh_A^r(\gamma_r,\wcoe^{r \to r}_a(\gamma_e)).
      \tag*{\qedhere}
  \end{alignat*}
\end{defi}

\begin{construction}
  We construct a displayed model $\HasWCoe^\CM$ over $\CM^\MI$.
  As described in~\autoref{rem:cat_haswcoe_limit}, we construct it as the limit of the diagram
  \begin{alignat*}{1}
    & \mathsf{base} \mapsto \CM^\MI, \\
    & \mathsf{path}(r,s) \mapsto \Path_\CM[\angles{-_r,-_s}], \\
    & \mathsf{refl}\text{-}\mathsf{loop}(r) \mapsto \ReflLoop_\CM[\angles{-_r}],
  \end{alignat*}
  over the diagram shape consisting of objects $\mathsf{base}$, $\mathsf{path}(r,s)$ for $r,s : \MI$ and $\mathsf{refl}\text{-}\mathsf{loop}(r)$ for $r : \MI$, such that $\mathsf{base}$ is terminal and with morphisms $\mathsf{refl}\text{-}\mathsf{loop}(r) \to \mathsf{path}(r,r)$ for $r:\MI$.

  We can verify by unfolding the definition that the displayed objects, morphisms, types and terms of this model are weak coercion structures over lines of objects, morphisms, types and terms of $\CM$.
\end{construction}

\begin{prop}\label{prop:hott_haswcoe_tfib}
  If a model $\CM$ has fibrant components, then the projection $\HasWCoe^\CM \to \CM^\MI$ is a split trivial fibration.
\end{prop}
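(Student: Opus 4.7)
The proof strategy mirrors that of \autoref{prop:cat_haswcoe_tfib}, using the description of $\HasWCoe^\CM$ as a limit of $\Path_\CM[\angles{-_r,-_s}]$ and $\ReflLoop_\CM[\angles{-_r}]$ over the appropriate diagram shape. The plan is to reduce the split trivial fibration condition for $\HasWCoe^\CM \to \CM^\MI$ to a lifting problem, then solve it componentwise by using the fibrancy of term sets in the off-diagonal case and the split trivial fibration $\pi : \ReflLoop_\CM \to \CM$ in the diagonal case.

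First I would fix a generating algebraic cofibration $I : \CA \to \CB$ in the category of models of HoTT (which arises from inclusions of the generating data for types or terms) and a lifting problem
\[ \begin{tikzcd}
  \CA \ar[r, "F"] \ar[d, "I"'] & \HasWCoe^\CM \ar[d] \\
  \CB \ar[r, "G"] & \CM^\MI.
\end{tikzcd} \]
Using the universal property of the limit defining $\HasWCoe^\CM$, producing a diagonal filler amounts to constructing, compatibly with $F$, families of diagonal lifts $K_{r,s} : \CB \to \Path_\CM[\angles{-_r,-_s}]$ for all $r,s : \MI$ and $L_r : \CB \to \ReflLoop_\CM[\angles{-_r}]$ for all $r : \MI$, such that $\pi_e \circ L_r = K_{r,r}$.

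The lifts $L_r$ are obtained from the fact that $\pi : \ReflLoop_\CM \to \CM$ is a split trivial fibration (the unnamed proposition at the end of~\autoref{ssec:path_and_reflloop_models}), whose pullback along the evaluation-at-$r$ functor $\CM^\MI \to \CM$ remains a split trivial fibration $\ReflLoop_\CM[\angles{-_r}] \to \CM^\MI$; this provides $L_r$ and fixes $K_{r,r} := \pi_e \circ L_r$ on the diagonal. The off-diagonal lifts $K_{r,s}$ for $r \neq s$ are then obtained using the diagonal cofibration $(r=s)$: since $\pi_1, \pi_2 : \Path_\CM \to \CM$ are split trivial fibrations (\autoref{prop:hott_pi1_split_tfib}), the pullback $\Path_\CM[\angles{-_r,-_s}]$ inherits fibrant components from $\CM$, and the HoTT analogue of~\autoref{lem:cat_fib_components_map} (proven exactly as in the category case, using~\autoref{lem:int_tiny} and the characterization~\autoref{lem:wcom_as_surj} applied sortwise to terms) implies that the set of diagonal fillers in the relevant hom-set is fibrant. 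Thus it suffices to define $K_{r,s}$ under $(r=s)$, where it is already given by $\pi_e \circ L_r$.

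The hard part is verifying the HoTT analogue of~\autoref{lem:cat_fib_components_map}, namely that morphism sets $\CMod(\CA,\CM)$ are fibrant whenever $\CA$ is algebraically cofibrant and $\CM$ has fibrant components. This follows the category argument verbatim by reformulating weak composition as a lifting property (\autoref{lem:wcom_as_surj}), noting that limits in models of HoTT are computed sortwise on the term families, and using algebraic cofibrancy of $\CA$ to solve the resulting lifting problem against a componentwise split trivial fibration. Once this lemma is in hand, compatibility between $K_{r,s}$ and $L_r$ holds by construction, and functoriality of all data in $\CB$ is automatic since the lifts are obtained from split trivial fibrations and sections.
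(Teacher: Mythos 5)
Your proposal is correct and takes essentially the same route as the paper: the paper's own proof is literally ``same proof as \autoref{prop:cat_haswcoe_tfib}'', and your reduction via the limit description of $\HasWCoe^\CM$, the lifts $L_r$ obtained from the split trivial fibration $\pi : \ReflLoop_\CM \to \CM$ pulled back to $\CM^\MI$, and the extension of $K_{r,s}$ off the diagonal cofibration $(r=s)$ using the fibrancy of mapping sets (the HoTT analogue of \autoref{lem:cat_fib_components_map}) is exactly that argument. One small correction: the fibrant components of $\Path_\CM[\angles{-_r,-_s}]$ are not a consequence of $\pi_1,\pi_2$ being split trivial fibrations (a split trivial fibration over a model with fibrant components need not itself have fibrant components); the correct reason, which is what the category-case proof uses, is that the components of $\Path_\CM$ are finite limits ($\Sigma$-types of contractibility data over term sets) of components of $\CM$ and hence fibrant.
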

\begin{proof}
  Same proof as~\autoref{prop:cat_haswcoe_tfib}.
\end{proof}

Now assume that $\CM$ is a global algebraically cofibrant model of HoTT with fibrant components.

\begin{lem}
  Any algebraically cofibrant model is democratic; in particular, the model $\CM$ is democratic.
\end{lem}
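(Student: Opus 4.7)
The plan is to exhibit a split trivial fibration $p : \CM' \to \CM$ from a model $\CM'$ whose objects come pre-equipped with a democracy witness; by algebraic cofibrancy of $\CM$, $p$ will admit a section $s : \CM \to \CM'$, and the witness attached to $s(\Gamma)$ will provide the desired democracy data on $\CM$.

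I build $\CM'$ as follows. An object of $\CM'$ is a triple $(\Gamma, K, \phi)$ with $\Gamma \in \CM$, $K \Colon \CM.\Ty_n(1_\CM)$ (at a suitable universe level), and $\phi : 1_\CM.K \cong \Gamma$ an isomorphism in $\CM$. A morphism $(\Gamma, K, \phi) \to (\Gamma', K', \phi')$ is simply a morphism $\Gamma \to \Gamma'$ in $\CM$; types and terms over $(\Gamma, K, \phi)$ are types and terms over $\Gamma$ in $\CM$, with all substitution inherited from $\CM$. The empty context is $(1_\CM, \Unit, \phi_0)$ using the canonical iso $1_\CM.\Unit \cong 1_\CM$, and context extension is defined by
\[ (\Gamma, K, \phi).A \triangleq (\Gamma.A,\; \Sigma_K \El(A^{\UU}[\phi]),\; \phi'), \]
where $A^{\UU} \Colon \CM.\Tm(\Gamma, \UU_n)$ is the universe name of $A$ (via $\El : \Tm(\UU_n) \cong \Ty_n$) and $\phi'$ is the evident composite $1_\CM.\Sigma_K \El(A^{\UU}[\phi]) \cong 1_\CM.K.\El(A^{\UU}[\phi]) \cong \Gamma.A$. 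All HoTT structure ($\Pi$-, $\Sigma$-, $\Id$-, boolean, unit, empty, $W$-types, univalence, \etc) is interpreted exactly as in $\CM$; the coherence and naturality axioms depend only on the underlying contexts and transfer directly from $\CM$.

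The forgetful $p : \CM' \to \CM$, $(\Gamma, K, \phi) \mapsto \Gamma$, strictly preserves the empty context, context extension, and all type-theoretic structure by construction. Since $p$ is the identity on types and terms, the term-lifting condition defining split trivial fibrations is vacuous, so $p$ is a split trivial fibration. Let $\CM_0$ denote the initial model of HoTT. By initiality, the top and bottom composites in the square
\[ \begin{tikzcd}
\CM_0 \ar[r] \ar[d] & \CM' \ar[d, "p"] \\
\CM \ar[r, equal] \ar[ur, dashed, "s"'] & \CM
\end{tikzcd} \]
coincide, so algebraic cofibrancy of $\CM$ provides a diagonal filler $s$ with $p \circ s = \id_\CM$. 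Writing $s(\Gamma) = (\Gamma, K(\Gamma), \phi_\Gamma)$, the pair $(K(\Gamma), \phi_\Gamma)$ then witnesses democracy of $\CM$.

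The main technical burden is verifying that $\CM'$ really is a model of HoTT and that $p$ strictly preserves all CwF and HoTT structure. The crux is context extension: one must use the universe presentation of HoTT to convert the type $A$ into a family indexed by $K$ via $\phi$, and the standard isomorphism between iterated context extension and $\Sigma$-types to supply $\phi'$. Once context extension is in place, the remaining structure is inherited pointwise from $\CM$ and the strictness of $p$ is immediate.
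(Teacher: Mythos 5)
Your proposal is correct and follows essentially the same route as the paper: the paper also forms the ``democratic core'' $\dem(\CM)$ whose objects are pairs of an object $\Gamma$ with a closed type and an isomorphism $1.K_\Gamma \cong \Gamma$, observes that the projection to $\CM$ is a split trivial fibration (the term-lifting condition being trivially satisfied since the projection is the identity on types and terms), and extracts the democracy data from the section supplied by algebraic cofibrancy. Your write-up merely spells out the context-extension step via $\Sigma$-types and lifting of universe levels, which the paper leaves implicit.
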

\begin{proof}
  Write $\dem(\CM)$ for the democratic core of $\CM$.
  An object of $\dem(\CM)$ consists of an object $\Gamma \in \CM$ together with a closed type $K_\Gamma$ an isomorphism $\Gamma \cong 1.K_\Gamma$.
  The rest of the structure is inherited from $\CM$.
  Then the projection $\pi : \dem(\CM) \to \CM$ is a split trivial fibration.
  Since $\CM$ is algebraically cofibrant, the projection admits a section, which witnesses the democracy of $\CM$.
\end{proof}

\begin{construction}\label{constr:strict_rezk_hott}
  We write $\overline{\CM}$ for the $\Cof$-fibrant replacement of $\CM$, \ie the model freely generated by a morphism $i : \CM \to \overline{\CM}$ and a $\Cof$-fibrancy structure.
\end{construction}

\begin{prop}
  The model $\overline{\CM}$ is democratic.
\end{prop}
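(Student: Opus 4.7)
The plan is to adapt the democratic-core argument used to show that $\CM$ is democratic, combined with the universal property of $\overline{\CM}$ as the $\Cof$-fibrant replacement of $\CM$.

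First I would consider the democratic core $\dem(\overline{\CM})$, defined in the usual way: its objects are triples $(\Gamma, K_\Gamma, \iota)$ with $\Gamma \in \overline{\CM}$, $K_\Gamma \Colon \overline{\CM}.\Ty_n(1)$ and $\iota : \Gamma \cong 1.K_\Gamma$ an isomorphism, and its types, terms and morphisms are inherited from $\overline{\CM}$. The projection $\pi_{\overline{\CM}} : \dem(\overline{\CM}) \to \overline{\CM}$ is a split trivial fibration, by exactly the same argument as for $\dem(\CM)$. The goal then becomes to produce a section of $\pi_{\overline{\CM}}$.

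Since $\pi_{\overline{\CM}}$ is bijective on types and terms, the $\Cof$-fibrancy structure of $\overline{\CM}$ (\autoref{def:hott_glue_structure}) transfers directly to a $\Cof$-fibrancy structure on $\dem(\overline{\CM})$, making $\pi_{\overline{\CM}}$ a morphism of $\Cof$-fibrant models. By functoriality of the democratic core construction, the morphism $i : \CM \to \overline{\CM}$ induces a morphism $\dem(i) : \dem(\CM) \to \dem(\overline{\CM})$ compatible with the projections. The preceding lemma provides a section $s_\CM : \CM \to \dem(\CM)$ of $\pi_\CM$, and I would form the composite $j \triangleq \dem(i) \circ s_\CM : \CM \to \dem(\overline{\CM})$, which satisfies $\pi_{\overline{\CM}} \circ j = i$.

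By the universal property of $\overline{\CM}$ as the $\Cof$-fibrant replacement of $\CM$, the morphism $j$ extends uniquely to a $\Cof$-fibrancy-preserving morphism $t : \overline{\CM} \to \dem(\overline{\CM})$ with $t \circ i = j$. The composite $\pi_{\overline{\CM}} \circ t : \overline{\CM} \to \overline{\CM}$ is a morphism of $\Cof$-fibrant models whose restriction along $i$ is $\pi_{\overline{\CM}} \circ j = i$; but $\id_{\overline{\CM}}$ also has this property, so by the uniqueness clause of the universal property, $\pi_{\overline{\CM}} \circ t = \id_{\overline{\CM}}$. Hence $t$ is a section of $\pi_{\overline{\CM}}$, and its existence witnesses democracy of $\overline{\CM}$.

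The only potential obstacle is the verification that the $\Cof$-fibrancy structure genuinely transfers between $\overline{\CM}$ and $\dem(\overline{\CM})$, but this is essentially automatic once one notes that $\dem(-)$ only changes the presentation of contexts and leaves the sorts $\Ty$ and $\Tm$ (over a given object) intact. The argument therefore closely mirrors the strategy used for the democratic core of $\CM$, with the universal property of the $\Cof$-fibrant replacement doing the work that algebraic cofibrancy did in the previous lemma.
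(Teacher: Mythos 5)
Your argument is correct, but it is a genuinely different proof from the one in the paper. The paper disposes of this proposition in one line: $\overline{\CM}$ is freely generated from the democratic model $\CM$ by operations ($\Glue_\Tm$, $\glue_\Tm$) that only produce new terms and new equations between terms, so the contexts of $\overline{\CM}$ are generated from those of $\CM$ by context extension, and democracy is inherited ($\Gamma.A \cong 1.\Sigma(K_\Gamma,A)$, etc.). You instead run a categorical argument: transfer the $\Cof$-fibrancy structure along the bijective-on-types-and-terms projection $\pi_{\overline{\CM}} : \dem(\overline{\CM}) \to \overline{\CM}$, lift $i$ through $\dem(\CM)$ using the section $s_\CM$ provided by algebraic cofibrancy of $\CM$, and then use initiality of $(\overline{\CM},i)$ in the comma category $(\CM \downarrow R)$ twice — once to produce $t : \overline{\CM} \to \dem(\overline{\CM})$ and once (via uniqueness of endomorphisms of the initial object) to conclude $\pi_{\overline{\CM}} \circ t = \id$. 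All the steps check out: $\dem(-)$ is functorial, $\pi_{\overline{\CM}} \circ \dem(i) \circ s_\CM = i \circ \pi_\CM \circ s_\CM = i$, and both $\pi_{\overline{\CM}} \circ t$ and $\id_{\overline{\CM}}$ strictly preserve the (transferred) $\Cof$-fibrancy structure. What your route buys is rigor without inspecting the syntax of the free construction — it replaces the paper's informal ``only terms are added'' with the universal property doing the work; what it costs is the extra bookkeeping of verifying that $\dem(\overline{\CM})$ really is an object of the category of $\Cof$-fibrant models and that all the maps involved live there. Either proof is acceptable.
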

\begin{proof}
  This follows from the fact that $\overline{\CM}$ is obtained from the democratic model $\CM$ by only adding new terms and equations between terms.
\end{proof}

\begin{lem}\label{lem:hott_coffib_commutes_expi}
  The map $i^\MI : \CM^\MI \to \overline{\CM}^\MI$ exhibits $\overline{\CM}^\MI$ as a $\Cof$-fibrant replacement of $\CM^\MI$.
\end{lem}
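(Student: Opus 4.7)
The plan is to follow the proof of \autoref{lem:cat_coffib_commutes_expi} essentially verbatim, replacing the categorical setting by the HoTT one. Working externally, we have categories $\CcCwf$ of cubical models of HoTT and $\CcCwf_{\coffib}$ of cubical models equipped with a $\Cof$-fibrancy structure. By \autoref{cor:finitary_add_ext} applied to the morphism of essentially algebraic theories $\Th_{\mathbf{cFam}} \to \Th_{\CcCwf^{\dem}}$ sending the cubical family to $\Tm$, the forgetful functor $R : \CcCwf_{\coffib} \to \CcCwf$ is induced by a morphism of finitary essentially algebraic theories, hence admits a left adjoint $L$. We write $\eta$ for its unit. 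By construction, $\overline{\CM}$ is $R L(\CM)$ and $i = \eta_\CM$.

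Next, I would lift the endo-adjunction $((-)^\MI \dashv (-)_\MI)$ from $\CcCwf$ to $\CcCwf_{\coffib}$, exactly as in \autoref{con:cat_lift_exp_i} and \autoref{con:cat_lift_sqrt_i}. For $(-)^\MI$, given a $\Cof$-fibrancy structure on $\CM$, one defines $\ext_{\Tm_{\CM^\MI}}$ pointwise along the interval; given a line $x \in \CM^\MI.\Tm(\Gamma, A)$, a cofibration $\alpha$, a partial $y$ and partial identification $p$, one poses $\ext_{\Tm_{\CM^\MI}}(x,\alpha,y,p) \triangleq \lambda i \mapsto \ext_{\Tm_\CM}(x(i),\alpha,y(i),p(i))$. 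For $(-)_\MI$, one uses the universal property of $\CM_\MI$: it suffices to construct the structure after applying $(-)^\MI$ and the counit $\epsilon_\CM$, which reduces to evaluating $\ext_\Tm$ on $\CM$ under the cofibration $\forall_i \alpha(i)$, exactly as before. Checking that morphisms between $\Cof$-fibrant models preserve $\Cof$-fibrancy if and only if their transposes do is again a routine unfolding using the naturality of $\epsilon_\CM$ and the logical equivalence $(\forall_i \alpha) \lra \alpha$ when $\alpha$ is constant. This gives an endo-adjunction on $\CcCwf_{\coffib}$ such that $R$ strictly commutes with both of its functors, unit and counit.

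In this situation, we obtain for each $\CM \in \CcCwf$ an adjunction between the comma categories $(\CM \downarrow R)$ and $(\CM^\MI \downarrow R)$, whose left adjoint sends $(\CN, f : \CM \to R(\CN))$ to $(\CN^\MI, f^\MI)$, using $R(\CN^\MI) = R(\CN)^\MI$. Applying this left adjoint to the initial object, which is $(L(\CM), \eta_\CM)$ in the source and $(L(\CM^\MI), \eta_{\CM^\MI})$ in the target, yields the desired isomorphism $L(\CM)^\MI \cong L(\CM^\MI)$ identifying $\eta_\CM^\MI$ with $\eta_{\CM^\MI}$. Equivalently, $i^\MI : \CM^\MI \to \overline{\CM}^\MI$ is a $\Cof$-fibrant replacement of $\CM^\MI$.

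The main obstacle is not a conceptual one but a bookkeeping one: verifying that the $\Cof$-fibrancy structure of \autoref{def:hott_glue_structure}, whose data involves $\Tm$ indexed by types $A$ which themselves are terms of universes, lifts coherently through $(-)^\MI$ and $(-)_\MI$. This goes smoothly because the cubical family structure on $\CM$ interacts with $(-)^\MI$ sortwise (as recalled in \autoref{ssec:int_tiny}), so the pointwise definition of $\ext_\Tm$ on $\CM^\MI$ is well-typed, and the construction on $\CM_\MI$ descends from the one on $\CM$ via the universal property, just as in the categorical case.
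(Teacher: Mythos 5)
Your proposal is correct and follows exactly the route the paper takes: the paper's proof of this lemma is literally a reference back to \autoref{lem:cat_coffib_commutes_expi}, and your write-up is a faithful transcription of that argument (lifting the endo-adjunction $((-)^\MI \dashv (-)_\MI)$ to the category of $\Cof$-fibrant models, then using the induced adjunction of comma categories to see that the left adjoint preserves the initial object). Nothing further is needed.
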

\begin{proof}
  Same proof as~\autoref{lem:cat_coffib_commutes_expi}.
\end{proof}

\begin{prop}\label{prop:hott_haswcoe_glue}
  The displayed model $\HasWCoe^{\overline{\CM}}$ can be equipped with a displayed $\Cof$-fibrancy structure.
\end{prop}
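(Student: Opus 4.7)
The plan is to closely mirror the proof of Proposition~\ref{prop:cat_haswcoe_glue}. By Definition~\ref{def:hott_glue_structure}, a $\Cof$-fibrancy structure on a model consists of only one operation, $\ext_\Tm$ (decomposable into $\Glue_\Tm$ and $\glue_\Tm$), so it suffices to interpret $\ext_\Tm$ for $\HasWCoe^{\overline{\CM}}$. Unfolding, the input data consists of lines $\Gamma, A, x$ of contexts, types, and terms, each equipped with weak coercion structures, together with a cofibration $\alpha$, a partial line $y$ of terms of type $A$ and a partial line $p$ of identifications $x \sim y$, both also equipped with weak coercion structures. The required output is a weak coercion structure over the glued term
\[ G(i) \triangleq \Glue_\Tm(x(i), [\alpha \mapsto (y(i), p(i))]) \]
and the glue identification
\[ g(i) \triangleq \glue_\Tm(x(i), [\alpha \mapsto (y(i), p(i))]), \]
restricting to $\wcoe_y, \wcoh_y$ and $\wcoe_p, \wcoh_p$ under $\alpha$.

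To define $\wcoe_G^{r \to s}(\gamma_e) : \wcoe_A^{r \to s}(\gamma_e, G(r), G(s))$, I would transport $\wcoe_x^{r \to s}(\gamma_e)$ along the identifications $g(r)$ and $g(s)$, using the functoriality of the type-valued relational equivalence $\wcoe_A^{r \to s}(\gamma_e)$ in both arguments. Analogously, $\wcoh_G^r$ is obtained from $\wcoh_x^r$ by transporting through the reflexivity structure $\wcoh_A^r$. The structures $\wcoe_g$ and $\wcoh_g$ are then essentially tautological: by construction, they are the canonical witnesses that $g$ intertwines $\wcoe_x$ with the transported coercion $\wcoe_G$, which follows from the specific definition of $\Id^{\RelEquiv}$ on identity types given in Section~\ref{ssec:rel_equivs}.

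The main obstacle is verifying the restriction conditions under $\alpha$. Under $[\alpha]$ we have $G = y$ and $g = p$ by the defining equations of $\Glue_\Tm$ and $\glue_\Tm$, so what needs to be checked is that the transported coercion $\wcoe_G$ agrees with $\wcoe_y$ — that is, that transporting $\wcoe_x$ along $p$ yields $\wcoe_y$. This is precisely the data supplied by $\wcoe_p$, which records the coherence that $p$ carries $\wcoe_x$ to $\wcoe_y$; the corresponding restriction for $\wcoh_G$ follows from $\wcoh_p$. As in the categorical case, the restrictions for $\wcoe_g$ and $\wcoh_g$ then hold automatically, since these structures are defined by the very transport whose coherence is being checked. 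The verification is thus parallel to the final step of Proposition~\ref{prop:cat_haswcoe_glue}, with $\wcoe_p$ and $\wcoh_p$ playing the role that the isomorphism equality $\wcoe_e$ played in the categorical argument.
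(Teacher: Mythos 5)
Your setup is right (only $\ext_\Tm$ needs interpreting, and you have correctly identified the input and output data), but the core of your construction has a genuine gap at exactly the point where the HoTT case diverges from the categorical one. You define $\wcoe_G$ by transporting $\wcoe_x$ along $g(r)$ and $g(s)$, and then argue that under $\alpha$ this ``agrees with'' $\wcoe_y$ because $\wcoe_p$ records that $p$ carries $\wcoe_x$ to $\wcoe_y$. But the displayed $\Cof$-fibrancy structure requires the weak coercion structure over $G$ to restrict to $\wcoe_y$ \emph{strictly} (as an equality of elements of the cubical set, since $\{ X \mid \alpha \hra y\}$ is defined by strict extension), whereas $\wcoe_p$ is a term of an identity type in $\overline{\CM}$ and only provides an identification. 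In the categorical proof this analogy goes through because $\EqHom_\CC(f,g)$ implies the strict equality $f = g$ by the axioms of the theory of categories; no such reflection is available for $\Id$-types in a model of HoTT, so your transported $\wcoe_G$ has no reason to literally equal $\wcoe_y$ under $\alpha$.

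The paper's proof avoids this by never writing down an explicit formula for $\wcoe_G$. Instead it observes that, in the appropriate context, the type
\[ (G_e : \wcoe^{r \to s}_A(\gamma_e,G(r),G(s))) \times (g_e : \Id^\RelEquiv(\wcoe_A^{r \to s}(\gamma_e),\wcoe_x^{r \to s}(\gamma_e),G_e,g(r),g(s))) \]
is contractible in $\overline{\CM}$ (by the definition of $\Id^\RelEquiv$), and then invokes \autoref{lem:hott_glue_contr}: in a $\Cof$-fibrant model, the set of terms of a contractible type is trivially fibrant, so the partial element $[\alpha \mapsto \angles{\wcoe_y,\wcoe_p}]$ extends to a total element $\angles{\wcoe_G,\wcoe_g}$ that restricts to it \emph{on the nose}. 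This single extension step simultaneously produces the coercion structure, its coherence witness, and the strict restriction condition; the reflexive-loop components $\wcoh_G, \wcoh_g$ are handled by the same argument with $\Id^\isRefl$. To repair your proof you would need to replace the transport construction by this extension argument (or otherwise explain how to correct your transported element to a strictly extending one, which is exactly what the extension structure provides).
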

\begin{proof}
  Take a cofibration $\alpha$ and lines $\Gamma : \MI \to \Ob_{\overline{\CM}}$, $A : (i:\MI) \to \overline{\CM}.\Ty(\Gamma(i))$, $x : (i:\MI) \to \overline{\CM}.\Tm(\Gamma(i),A(i))$, $y : [\alpha] \to (i : \MI) \to \overline{\CM}.\Tm(\Gamma(i),A(i))$ and $p : [\alpha] \to (i : \MI) \to \overline{\CM}.\Tm(\Gamma(i),\Id_{A(i)}(x(i),y(i)))$.
  
  We need to define weak coercion structures over the lines $G(-) = \Glue_\Tm(x(-),y(-),e(-))$ and $g(-) = \glue_\Tm(x(-),y(-),e(-))$.
  They should match with the weak coercion structures of $y$ and $p$ under $\alpha$.

  Over the context $(\gamma_1:\Gamma).(\gamma_2.\Gamma).(\gamma_e:\wcoe^{r\to s}_\Gamma(\gamma_1,\gamma_2))$, the type
  \[ (G_e : \wcoe^{r \to s}_A(\gamma_e,G(r),G(s))) \times (g_e : \Id^\RelEquiv(\wcoe_A^{r \to s}(\gamma_e),\wcoe_x^{r \to s}(\gamma_e),G_e,g(r),g(s)))
  \]
  is contractible (this follows from the definition of $\Id^\RelEquiv$).
  By~\autoref{lem:hott_glue_contr}, this type has a term $\angles{\wcoe^{r \to s}_G(\gamma_e),\wcoe^{r \to s}_g(\gamma_e)}$ that restricts to $\angles{\wcoe^{r \to s}_y(\gamma_e),\wcoe^{r \to s}_p(\gamma_e)}$ under $\alpha$.

  Over the context $(\gamma:\Gamma).(\gamma_e.\wcoe^{r\to s}_\Gamma(\gamma,\gamma)).(\gamma_r:\wcoh^{r}_\Gamma(\gamma,\gamma_e))$, the type
  \[ (G_r : \wcoh^{r}_A(\gamma_r,\wcoe^{r \to r}_G(\gamma_e))) \times (g_e : \Id^\isRefl(\wcoh_A^{r}(\gamma_r),\wcoh_x^{r}(\gamma_r),G_r,\wcoe^{r\to r}_g(\gamma_r)))
  \]
  is contractible (this follows from the definition of $\Id^\isRefl$).
  By~\autoref{lem:hott_glue_contr}, this type has a term $\angles{\wcoh^{r}_G(\gamma_r),\wcoh^{r}_g(\gamma_r)}$ that restricts to $\angles{\wcoh^{r}_y(\gamma_r),\wcoh^{r}_p(\gamma_r)}$ under $\alpha$.
\end{proof}

\begin{prop}\label{prop:hott_sect_haswcoe}
  The displayed model $\HasWCoe^{\overline{\CM}} \to \overline{\CM}^\MI$ admits a global section.
\end{prop}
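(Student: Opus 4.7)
The plan is to replicate the strategy of \autoref{prop:cat_sect_haswcoe}, now working with models of HoTT rather than $1$-categories. The three ingredients already in place are \autoref{prop:hott_haswcoe_tfib} (a split trivial fibration $\HasWCoe^\CM \to \CM^\MI$), \autoref{prop:hott_haswcoe_glue} (a displayed $\Cof$-fibrancy structure on $\HasWCoe^{\overline{\CM}}$), and \autoref{lem:hott_coffib_commutes_expi} (the universal property of $\overline{\CM}^\MI$ as a $\Cof$-fibrant replacement). The argument then assembles these three facts in the same way as for categories.

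First I would show that $\CM^\MI$ is algebraically cofibrant. Since $\CM$ is algebraically cofibrant by hypothesis, this reduces to proving the HoTT analogue of \autoref{prop:exp_i_preserves_cofib}, namely that $(-)^\MI$ preserves algebraic cofibrations on the category of models of HoTT. As in the categorical case, one uses the endo-adjunction $((-)^\MI \dashv (-)_\MI)$ lifted to this category: preservation of algebraic cofibrations is equivalent to $(-)_\MI$ preserving split fibrations, which by a second use of the adjunction reduces to checking that $(-)^\MI$ carries a set of generating cofibrations to algebraic cofibrations. Combining this with \autoref{prop:hott_haswcoe_tfib} yields a section $s : \CM^\MI \to \HasWCoe^\CM$ of the split trivial fibration.

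Next I would compose $s$ with the morphism $\HasWCoe^\CM \to \HasWCoe^{\overline{\CM}}$ induced by the inclusion $i : \CM \to \overline{\CM}$ and the functoriality of the construction $\HasWCoe^{-}$. This produces a morphism $\CM^\MI \to \HasWCoe^{\overline{\CM}}$ that, when projected to $\overline{\CM}^\MI$, recovers $i^\MI$. By \autoref{prop:hott_haswcoe_glue}, $\HasWCoe^{\overline{\CM}}$ is itself $\Cof$-fibrant relative to $\overline{\CM}^\MI$. Applying the universal property of $\overline{\CM}^\MI$ given by \autoref{lem:hott_coffib_commutes_expi} (morphisms out of $\overline{\CM}^\MI$ into $\Cof$-fibrant models correspond to morphisms out of $\CM^\MI$), our morphism $\CM^\MI \to \HasWCoe^{\overline{\CM}}$ extends uniquely along $i^\MI$ to a morphism $\overline{\CM}^\MI \to \HasWCoe^{\overline{\CM}}$. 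Since the extension lies over the identity on $\overline{\CM}^\MI$ by construction and uniqueness, it is the desired global section.

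The main obstacle is the preservation of algebraic cofibrations by $(-)^\MI$ for HoTT models, which is more delicate than in the categorical case: the generating algebraic cofibrations must account for all universes, term constructors, and equations of HoTT, so one must verify that each of them is ``discrete enough'' in the higher-dimensional direction to be preserved by $(-)^\MI$. Everything else in the argument is a direct transposition of the categorical proof, consisting of composing an already-available section with a functorially-induced map and applying a universal property.
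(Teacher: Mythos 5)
Your proposal is correct and follows essentially the same strategy as the paper's proof: obtain a section of the split trivial fibration $\HasWCoe^\CM \to \CM^\MI$ from cofibrancy of $\CM^\MI$, push it forward to $\HasWCoe^{\overline{\CM}}$ via functoriality of $\HasWCoe^{-}$, and extend along $i^\MI$ using \autoref{prop:hott_haswcoe_glue} and the universal property of the $\Cof$-fibrant replacement from \autoref{lem:hott_coffib_commutes_expi}. You are in fact slightly more explicit than the paper on one step: the paper writes only ``since $\CC$ is algebraically cofibrant, it admits a section,'' tacitly relying on the HoTT analogue of \autoref{prop:exp_i_preserves_cofib} (that $(-)^\MI$ preserves algebraic cofibrations, so $\CM^\MI$ is cofibrant), which it does not state separately, whereas you flag it and sketch how it goes.
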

\begin{proof}
  By~\autoref{prop:hott_haswcoe_tfib}, $\HasWCoe^{\CC} \to \CC^\MI$ is a split trivial fibration.
  Since $\CC$ is algebraically cofibrant, it admits a section.
  By composing this section with $\HasWCoe^\CC \to \HasWCoe^{\overline{\CC}}$, we obtain a map $\CC^\MI \to \HasWCoe^{\overline{\CC}}$ displayed over $i^\MI : \CC^\MI \to \overline{\CC}^\MI$.

  By combining this with~\autoref{prop:hott_haswcoe_glue}, we can use the universal property of $\overline{\CC}^\MI$ from~\autoref{lem:hott_coffib_commutes_expi} to obtain a section of $\HasWCoe^{\overline{\CC}} \to \overline{\CC}^\MI$.
\end{proof}

As a consequence, every object, morphism, type or term of $\overline{\CM}$ can be equipped with a weak coercion structure $\wcoe_{-}$.

\begin{prop}\label{prop:strict_rezk_hott_fibrant}
  The model $\overline{\CM}$ has fibrant components.
\end{prop}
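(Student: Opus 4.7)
The plan is to apply \autoref{lem:fibrancy_from_reflgraph} once, mirroring the proof of \autoref{prop:strict_rezk_cat_fibrant}. I would take the base set to consist of pairs $(\Gamma, A)$ with $\Gamma \in \overline{\CM}$ and $A : \overline{\CM}.\Ty(\Gamma)$, and the dependent family to send $(\Gamma, A)$ to $\overline{\CM}.\Tm(\Gamma, A)$. Since $\overline{\CM}$ is democratic, this single application simultaneously yields fibrancy of all term sets, and via $\overline{\CM}.\Ty_n(\Gamma) \cong \overline{\CM}.\Tm(\Gamma, \UU_n)$ also of all type sets. The pseudo-reflexive graph structure on both the base and the dependent family is obtained by reading off the corresponding sorts of the path model $\Path_{\overline{\CM}}$ and the reflexive-loop model $\ReflLoop_{\overline{\CM}}$: edges on the base are pairs $(\Gamma_e : \RelEquiv(\Gamma_1,\Gamma_2),\,A_e)$, edges on terms are families $x_e$ of elements of $A_e$, and similarly for reflexive loops.

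The weak coercion data required by \autoref{lem:fibrancy_from_reflgraph} is supplied directly by \autoref{prop:hott_sect_haswcoe}: the global section $\overline{\CM}^\MI \to \HasWCoe^{\overline{\CM}}$ attaches operations $\wcoe$ and $\wcoh$ to every line of objects, types, morphisms, and terms, which are exactly the required weak coercion structures. This step is formally identical to the categorical case.

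The substantive work is to verify the two homotopicality conditions. For the first, given $x_1 : \overline{\CM}.\Tm(\Gamma_1, A_1)$ together with edge data $(\Gamma_e, A_e)$, I need to show that the set of pairs $(x_2, x_e)$ is trivially fibrant. The strategy is to exhibit this set, up to a canonical bijection, as the set of terms in $\overline{\CM}$ of a contractible type: in the context $\Gamma_2$, consider
\[ (\gamma_1 : \Gamma_1) \times (\gamma_e : \Gamma_e(\gamma_1, \gamma_2)) \times (x_2 : A_2(\gamma_2)) \times A_e(\gamma_e, x_1(\gamma_1), x_2), \]
which is contractible because $(\gamma_1, \gamma_e)$ form a contractible pair by $\Gamma_e.\funl$ and $(x_2, x_e)$ form a contractible pair by $A_e.\funr$ applied to $x_1(\gamma_1)$. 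Then \autoref{lem:hott_glue_contr}, using the $\Cof$-fibrancy of $\overline{\CM}$, promotes this contractibility to trivial fibrancy of the term set. The second homotopicality condition, concerning reflexive loops, is handled symmetrically, using the contractibility witness $A_r.\refl$ from the reflexivity structure in place of $A_e.\funr$.

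The main obstacle I anticipate is making the bijection of the preceding paragraph precise: a term of the displayed contractible type in context $\Gamma_2$ directly yields $x_2 : \overline{\CM}.\Tm(\Gamma_2, A_2)$, but only supplies an $x_e$ at the particular fibre $(\gamma_1, \gamma_e)$ chosen pointwise in $\gamma_2$, whereas \autoref{lem:fibrancy_from_reflgraph} requires $x_e$ uniformly over all $(\gamma_1, \gamma_2, \gamma_e)$ with $\gamma_e : \Gamma_e(\gamma_1,\gamma_2)$. Extending $x_e$ to the full family means traversing the $\Gamma_e$-fibre using the contractibility supplied by $\Gamma_e.\funl$, and checking that the resulting correspondence respects extension structures so that the trivial fibrancy genuinely transfers. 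Once this bookkeeping is settled, the proof concludes exactly as in the categorical case.
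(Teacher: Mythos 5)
Your overall architecture coincides with the paper's proof: a single application of \autoref{lem:fibrancy_from_reflgraph} with base $(\Gamma,X)$ and family $\overline{\CM}.\Tm(\Gamma,X)$, pseudo-reflexive graph data read off from $\Path_{\overline{\CM}}$ and $\ReflLoop_{\overline{\CM}}$, weak coercions supplied by \autoref{prop:hott_sect_haswcoe}, and homotopicality reduced to \autoref{lem:hott_glue_contr}. The one genuine gap is exactly the one you flag at the end, and it is not mere bookkeeping. The type you write down in context $\Gamma_2$, with $(\gamma_1,\gamma_e)$ bundled as existential $\Sigma$-components, has the wrong term set: a term of it chooses, for each $\gamma_2$, one fibre $(\gamma_1,\gamma_e)$ and an element of $A_e$ only there, whereas \autoref{lem:fibrancy_from_reflgraph} needs $a_e$ as a family over the whole context $\Gamma_1.\Gamma_2.\Gamma_e$. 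Repairing this by transporting along $\Gamma_e.\funl$ yields a section/retraction pair between the two sets whose round trip is only a path, not a strict equality, so trivial fibrancy does not transfer by a retract argument without substantial extra work; ``checking that the correspondence respects extension structures'' is precisely where this would stall.

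The paper's resolution avoids the mismatch entirely: instead of internalizing $(\gamma_1,\gamma_e)$ as $\Sigma$-components over $\Gamma_2$, use $\Pi$-types to move to the empty context and consider the closed type
\[ (a_2 : \Pi(\gamma_2:\Gamma_2)\, X_2(\gamma_2)) \times \Pi(\gamma_1,\gamma_2,\gamma_e)\, X_e(\gamma_e, a_1(\gamma_1), a_2(\gamma_2)). \]
Its set of closed terms is strictly isomorphic, by currying, to the set $(a_2 : \forall \gamma_2 \to X_2(\gamma_2)) \times (\forall \gamma_e \to X_e(\gamma_e,a_1(\gamma_1),a_2))$ that must be shown trivially fibrant, and its contractibility is witnessed by $\Pi^\RelEquiv(\Gamma_e,X_e).\funr$ applied at $a_1$ --- built from the same ingredients you identify, namely $\Gamma_e.\funl$ and $X_e.\funr$. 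Then \autoref{lem:hott_glue_contr} applies verbatim, with no retract argument needed. The reflexive-loop condition is handled identically using $\Pi^\isRefl(\Gamma_r,X_r).\refl$ in place of $\funr$. With that substitution your proof closes; everything else in your write-up matches the paper.
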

\begin{proof}
  We use~\autoref{lem:fibrancy_from_reflgraph} for $A = (\Gamma : \Ob_{\overline{\CM}}) \times \overline{\CM}.\Ty(\Gamma)$ and $B(\Gamma,X) = \overline{\CM}.\Tm(\Gamma,X)$.
  The families $E_A$, $R_A$, $E_B$ and $R_B$ are the corresponding finite limits of components of $\Path_{\overline{\CC}}$ and $\ReflLoop_{\overline{\CC}}$, namely:
  \begin{alignat*}{1}
    & E_A((\Gamma_1,X_1),(\Gamma_2,X_2)) = (\Gamma_e : \RelEquiv(\Gamma_1,\Gamma_2)) \times (X_e : \forall \gamma_e \to \RelEquiv(X_1(\gamma_1), X_2(\gamma_2))), \\
    & R_A((\Gamma,X),(\Gamma_e,X_e)) = (\Gamma_r : \isRefl(\Gamma_e)) \times (X_e : \forall \gamma_r \to \isRefl(X_e(\gamma_e))), \\
    & E_B((\Gamma_e,X_e),a_1,a_2) = \forall \gamma_e \to X_e(\gamma_e,a_1,a_2), \\
    & R_B((\Gamma_r,X_r),a,a_e) = \forall \gamma_r \to X_r(\gamma_r,a,a_e).
  \end{alignat*}
  By~\autoref{prop:hott_sect_haswcoe}, we have the required operations $\wcoe$ and $\wcoh$.
  
  It remains to construct extension structures
  \begin{alignat*}{1}
    & \forall (\Gamma_e,X_e)\ a_1 \to \HasExt((a_2 : \forall \gamma_2 \to X_2(\gamma_2)) \times (\forall \gamma_e \to X_e(\gamma_e,a_1(\gamma_1),a_2))), \\
    & \forall (\Gamma_r,X_r)\ a \to \HasExt((a_e : \forall \gamma_e \to X_e(\gamma_e,a(\gamma_1),a(\gamma_2))) \times (\forall \gamma_r \to X_r(\gamma_r,a(\gamma),a_e))).
  \end{alignat*}
  We use~\autoref{lem:hott_glue_contr} in both cases (relying on $\Pi$-types to move to the empty context).
  The contractibility of $(a_2 : \forall \gamma_2 \to X_2(\gamma_2)) \times (\forall \gamma_e \to X_e(\gamma_e,a_1(\gamma_1),a_2))$ follows from $\Gamma_e.\funl$ and $X_e.\funr$.
  The contractibility of $(a_e : \forall \gamma_e \to X_e(\gamma_e,a(\gamma_1),a(\gamma_2))) \times (\forall \gamma_r \to X_r(\gamma_r,a(\gamma),a_e))$ relies on $\Gamma_e.\funl$, $\Gamma_r.\refl$ and $X_r.\refl$.

  In fact the contractibility witnesses can be chosen to be $\Pi^\RelEquiv(\Gamma_e,X_e).\funr$ and $\Pi^\isRefl(\Gamma_r,X_r).\refl$.
\end{proof}

\begin{lem}\label{lem:hott_ext_cof_fibrant_is_fibrant}
  The external model $1^\ast_\square(\overline{\CM})$ has the universal property of the fibrant replacement of $1^\ast_\square(\CM)$.
\end{lem}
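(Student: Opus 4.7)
The proof transposes the argument of Lemma~\ref{lem:ext_cof_fibrant_is_fibrant} from categories to models of HoTT almost verbatim.

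First, I would introduce a notion of \emph{fibrancy structure} on a (possibly external) model $\CC$ of HoTT by specializing Definition~\ref{def:hott_glue_structure} to $\alpha = \bot$: concretely, an operation sending each term $x : \CC.\Tm(\Gamma,A)$ to a pair
\[ \ext_\Tm(x) : (y : \CC.\Tm(\Gamma,A)) \times \CC.\Tm(\Gamma,\Id_A(x,y)). \]
Write $\mathbf{Mod}_\fib$, $\mathbf{cMod}_\fib$ and $\mathbf{cMod}_\coffib$ for the categories of external fibrant models, cubical fibrant models and cubical $\Cof$-fibrant models. Each of these is the category of algebras of a finitary essentially algebraic theory --- the non-trivial case, $\mathbf{cMod}_\coffib$, is handled by Corollary~\ref{cor:finitary_add_ext} --- so all the forgetful functors involved admit left adjoints.

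Next, I would assemble the diagram of adjunctions
\[ \begin{tikzcd}[row sep=30pt, column sep=28pt]
    \mathbf{cMod}_{\coffib}
    \ar[r, bend right=15, "{1^\ast_\square}"']
    \ar[r, phantom, "\top"]
    \ar[d, "R"]
    & \mathbf{Mod}_\fib
    \ar[d, "R"]
    \ar[l, bend right=15, "{{(1_\square)}_\ast}"']
    \\ \mathbf{cMod}
    \ar[r, bend right=15, "{1^\ast_\square}"']
    \ar[r, phantom, "\top"]
    & \mathbf{Mod}
    \ar[l, bend right=15, "{{(1_\square)}_\ast}"']
  \end{tikzcd} \]
and verify that it is a morphism of adjunctions. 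The only non-trivial ingredient is that the top adjunction actually lifts: for every external fibrant model $\CC \in \mathbf{Mod}_\fib$, the cubical model ${(1_\square)}_\ast(\CC)$ admits a \emph{unique} $\Cof$-fibrancy structure extending its given fibrancy structure. The argument mirrors the category case: a component of a $\Cof$-fibrancy structure at cofibration $\alpha$ transposes through $((1_\square)_\ast \dashv 1^\ast_\square)$ to a global datum parameterized by a global cofibration, and the assumption $1^\ast_\square(\Cof) \cong \{\true,\false\}$ forces $\alpha$ to be either $\top$ (component uniquely determined by its boundary) or $\bot$ (component given by the fibrancy structure).

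Finally, the morphism of adjunctions induces, for each $\CM \in \mathbf{cMod}$, an adjunction between the comma categories $(\CM \downarrow R)$ and $(1^\ast_\square(\CM) \downarrow R)$ whose left adjoint sends $(\CD, f : \CM \to R(\CD))$ to $(1^\ast_\square(\CD), 1^\ast_\square(f))$. Preservation of initial objects by this left adjoint then says that $1^\ast_\square$ sends the $\Cof$-fibrant replacement $i : \CM \to R(\overline{\CM})$ to the fibrant replacement of $1^\ast_\square(\CM)$, which is exactly the claim. The main obstacle is the uniqueness argument for the lifted adjunction; this is where decidability of global cofibrations is essential, but it carries over from the category case without modification.
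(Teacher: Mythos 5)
Your proposal is correct and follows the paper's intended argument exactly: the paper's proof of this lemma simply reads ``Same as \autoref{lem:ext_cof_fibrant_is_fibrant}'', and you have carried out precisely that transposition to models of HoTT --- fibrancy structures as the $\alpha = \bot$ specialization of \autoref{def:hott_glue_structure}, the lifted adjunction obtained from the unique extension of a fibrancy structure on $(1_\square)_\ast(\CC)$ to a $\Cof$-fibrancy structure via decidability of global cofibrations, and the conclusion via preservation of initial objects by the left adjoint of the induced comma-category adjunction. (One cosmetic slip: the adjunction runs $1^\ast_\square \dashv (1_\square)_\ast$, not $(1_\square)_\ast \dashv 1^\ast_\square$, so the transposition in your uniqueness step sends a component valued in $(1_\square)_\ast(-)$ to a map out of $1^\ast_\square$ of its domain, where the global cofibrations become decidable.)
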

\begin{proof}
  Same as~\autoref{lem:ext_cof_fibrant_is_fibrant}.
\end{proof}

\begin{prop}\label{prop:strict_rezk_completion_hott_weq}
  The morphism $1^\ast_\square(i) : 1^\ast_\square(\CM) \to 1^\ast_\square(\overline{\CM})$ is a split weak equivalence.
\end{prop}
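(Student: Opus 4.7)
The plan is to mirror the proof of Proposition~\ref{prop:strict_rezk_completion_cat_weq} almost verbatim, invoking the HoTT-analogues of the lemmas used there.

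First I would show that the external projection $\pi : \ReflLoop_{1^\ast_\square(\CM)} \to 1^\ast_\square(\CM)$ admits a section. Since $\pi : \ReflLoop_\CM \to \CM$ is a split trivial fibration (the HoTT version of this fact is stated just after Proposition~\ref{prop:hott_pi1_split_tfib}) and $\CM$ is algebraically cofibrant, the internal projection admits a section $r : \CM \to \ReflLoop_\CM$. Applying the finite-limit-preserving functor $1^\ast_\square$ yields a section $1^\ast_\square(r)$ of $1^\ast_\square(\pi)$. Because the components of $\ReflLoop_\CM$ are finite limits of components of $\CM$ (by the same functorial-semantics argument used earlier, with $\ReflLoop_\CM = \CM \circ R$ for a left exact endomorphism $R$ of $\Th_{\mathsf{HoTT}}^\dem$), we have $1^\ast_\square(\ReflLoop_\CM) = \ReflLoop_{1^\ast_\square(\CM)}$, so $1^\ast_\square(r)$ really is the desired section.

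Next I would argue that $1^\ast_\square(i)$ is an algebraic trivial cofibration. By Lemma~\ref{lem:hott_ext_cof_fibrant_is_fibrant}, $1^\ast_\square(i) : 1^\ast_\square(\CM) \to 1^\ast_\square(\overline{\CM})$ exhibits $1^\ast_\square(\overline{\CM})$ as the fibrant replacement of $1^\ast_\square(\CM)$ in the external category of models of HoTT, \ie it is the free extension of $1^\ast_\square(\CM)$ by a fibrancy structure ($\ext_\Tm$ specialized to the cofibration $\bot$). Such a free extension has, by construction, the left lifting property against every morphism whose codomain already carries such a fibrancy structure, and an external morphism $F : \CM' \to \CN'$ is a split fibration in the sense of Definition of split fibration for HoTT exactly when it has the identification-lifting property, which lets one equip the fibre of $F$ over any term with a fibrancy structure. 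Hence $1^\ast_\square(i)$ lifts against all split fibrations, so it is an algebraic trivial cofibration.

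Finally I would combine these two facts with Proposition~\ref{prop:hott_tcof_is_weq}: since $\pi : \ReflLoop_{1^\ast_\square(\CM)} \to 1^\ast_\square(\CM)$ admits a section and $1^\ast_\square(i)$ is an algebraic trivial cofibration between democratic models of HoTT, we conclude that $1^\ast_\square(i)$ is a split weak equivalence, as required. The only delicate point is the identification of $1^\ast_\square(\overline{\CM})$ as the external fibrant replacement, but this is exactly the content of Lemma~\ref{lem:hott_ext_cof_fibrant_is_fibrant}, so there is no genuine new obstacle beyond the categorical case.
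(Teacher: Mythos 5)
Your proposal follows the paper's proof essentially verbatim: the paper likewise obtains a section of $\pi : \ReflLoop_{1^\ast_\square(\CM)} \to 1^\ast_\square(\CM)$ from algebraic cofibrancy of $\CM$ together with preservation of finite limits by $1^\ast_\square$, identifies $1^\ast_\square(i)$ as an algebraic trivial cofibration via \autoref{lem:hott_ext_cof_fibrant_is_fibrant}, and concludes with \autoref{prop:hott_tcof_is_weq}. The only difference is that you spell out why the external fibrant replacement lifts against split fibrations, a step the paper leaves implicit; your justification is sound.
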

\begin{proof}
  Same as the proof of~\autoref{prop:strict_rezk_completion_cat_weq}, relying on~\autoref{prop:hott_tcof_is_weq} and~\autoref{lem:hott_ext_cof_fibrant_is_fibrant}.
\end{proof}

\begin{thm}\label{thm:hott_strict_rezk}
  Any global algebraically cofibrant model of HoTT with fibrant components admits a strict Rezk completion.
\end{thm}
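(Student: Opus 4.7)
The plan is to mimic the strategy used for categories in the previous section, since all the requisite pieces have already been assembled. Given a global algebraically cofibrant model $\CM$ of HoTT with fibrant components, I would take $\overline{\CM}$ to be the $\Cof$-fibrant replacement from~\autoref{constr:strict_rezk_hott}, together with the canonical morphism $i : \CM \to \overline{\CM}$.

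To show this gives a strict Rezk completion, three conditions must be verified. First, that $\overline{\CM}$ has fibrant components: this is exactly~\autoref{prop:strict_rezk_hott_fibrant}, whose proof already invokes~\autoref{lem:fibrancy_from_reflgraph} applied to the pseudo-reflexive graph on terms inherited from $\Path_{\overline{\CM}}$ and $\ReflLoop_{\overline{\CM}}$, using the section produced in~\autoref{prop:hott_sect_haswcoe} for the weak coercion data and using $\Cof$-fibrancy together with~\autoref{lem:hott_glue_contr} for the homotopicality conditions. Second, that $\overline{\CM}$ is complete: since $\overline{\CM}$ has fibrant components and carries a $\Cof$-fibrancy structure by construction, completeness is immediate from~\autoref{prop:hott_glue_complete}. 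Third, that $1^\ast_\square(i)$ is a split weak equivalence of models of HoTT: this is~\autoref{prop:strict_rezk_completion_hott_weq}, established via~\autoref{prop:hott_tcof_is_weq} and~\autoref{lem:hott_ext_cof_fibrant_is_fibrant}.

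Since no routine calculation remains beyond assembling these three cited results, the proof itself is essentially a one-line combination. The substantive work has been done in the preceding propositions; in particular, the main conceptual obstacle, which is showing that adding $\Cof$-fibrancy freely produces a model whose components are genuinely fibrant (not merely complete in the naive sense), has already been resolved by the pseudo-reflexive graph argument behind~\autoref{prop:strict_rezk_hott_fibrant}. Thus the proof I would write is simply: apply~\autoref{prop:strict_rezk_hott_fibrant}, \autoref{prop:hott_glue_complete}, and~\autoref{prop:strict_rezk_completion_hott_weq} to the model $\overline{\CM}$ of~\autoref{constr:strict_rezk_hott}.
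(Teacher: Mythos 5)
Your proposal is correct and coincides with the paper's own proof: both take $\overline{\CM}$ from \autoref{constr:strict_rezk_hott} and assemble \autoref{prop:strict_rezk_hott_fibrant}, \autoref{prop:hott_glue_complete}, and \autoref{prop:strict_rezk_completion_hott_weq} in exactly the same way.
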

\begin{proof}
  We use the model $\overline{\CM}$ defined in~\autoref{constr:strict_rezk_hott}.
  By~\autoref{prop:strict_rezk_hott_fibrant} it has fibrant components.
  By~\autoref{prop:hott_glue_complete} it is complete.
  By~\autoref{prop:strict_rezk_completion_hott_weq} the morphism $1^\ast_\square(i) : 1^\ast_\square(\CM) \to 1^\ast_\square(\overline{\CM})$ is a split weak equivalence.
\end{proof}

\begin{rem}\label{rem:hott_strict_rezk_axioms}
  The model of HoTT freely generated by any number of axioms (closed terms, without any new equations) is algebraically cofibrant.
\end{rem}


\section{Homotopy canonicity}\label{sec:homotopy_canonicity}

We will prove the following theorem.
\begin{thm}[Homotopy canonicity]\label{thm:homotopy_canonicity}
  Let $\CS$ be the initial model of HoTT.
  
  For every closed term $b : \CS.\Tm(1,\BoolTy)$, there is an element of
  \[ \CS.\Tm(1,\Id(b, \true)) + \CS.\Tm(1,\Id(b, \false)). \]
\end{thm}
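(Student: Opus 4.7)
The plan is to apply a standard gluing argument, made available by the strict Rezk completion machinery developed in this paper. Work internally to $\CcSet$ with the initial model $\CS$ of HoTT. By Remark~\ref{rem:hott_strict_rezk_axioms}, $\CS$ is algebraically cofibrant, and its discrete components are trivially fibrant in the cubical setting; Theorem~\ref{thm:hott_strict_rezk} thus provides a strict Rezk completion $i : \CS \to \overline{\CS}$, where $\overline{\CS}$ has fibrant components, is complete, and $1^\ast_\square(i)$ is an external split weak equivalence.

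The central construction is a displayed HoTT-family $\MM^\bullet$ over $\overline{\CS}$ in which closed types are interpreted as predicates valued in fibrant sets. On closed boolean terms $b$ of $\overline{\CS}$, set
\[ \BoolTy^\bullet(b) \triangleq \overline{\CS}.\Tm(1, \Id(b, \true)) + \overline{\CS}.\Tm(1, \Id(b, \false)), \]
which is fibrant since $\overline{\CS}$ has fibrant components and coproducts of fibrant sets are fibrant in $\CcSet$. The remaining non-universe type formers ($\Sigma$, $\Pi$, $\Unit$, $\Init$, $W$, $\Id$, $\Lift$) are interpreted by the usual unary logical predicate recipes lifted to $\SSet^\fib$; the universes $\UU_n^\bullet$ are given by the universe of fibrant-set-valued predicates on the elements of $\overline{\CS}.\Tm(1, \UU_n)$. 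Applying the sconing construction yields a displayed model $\CScone_{\MM^\bullet}$ over $\overline{\CS}$. Precomposing with $i$ yields a displayed model over $\CS$, which by initiality admits a section.

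Evaluating this section externally at a global closed boolean $b$ produces a global element of $\BoolTy^\bullet(i(b))$, i.e., an element of $1^\ast_\square\bigl(\overline{\CS}.\Tm(1, \Id(i(b), \true))\bigr) + 1^\ast_\square\bigl(\overline{\CS}.\Tm(1, \Id(i(b), \false))\bigr)$. To descend the resulting identification from $\overline{\CS}$ to $\CS$, use that $i$ preserves the constructors $\BoolTy$, $\true$, $\false$ and $\Id$, and apply the weak term lifting property of the split weak equivalence $1^\ast_\square(i)$ to the type $\Id(b, \true)$ (respectively $\Id(b, \false)$): a term in $\overline{\CS}.\Tm(1, i(\Id(b, \true)))$ is weakly lifted to a term in $\CS.\Tm(1, \Id(b, \true))$, as required.

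The main obstacle is to exhibit the displayed univalence structure on $\MM^\bullet$: one must verify that the displayed universe of fibrant-set-valued predicates is displayed-univalent over the $\ua$ of $\overline{\CS}$. This is precisely where completeness of $\overline{\CS}$ is indispensable, since the strict identifications on $\overline{\CS}.\Ty_n(1)$ match, via the completeness equivalence, the homotopical equivalence structure on $\SSet^\fib_n$ that the cubical universe inherits from the internal univalence of $\CcSet$. Once the displayed univalence is produced, together with the more routine verifications of the displayed structures for the other type formers, the gluing argument goes through as sketched.
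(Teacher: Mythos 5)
Your overall strategy matches the paper's: construct $\overline{\CS}$ via Theorem~\ref{thm:hott_strict_rezk}, define a displayed HoTT-family over $\overline{\CS}$ with fibrant-set-valued predicates, scone it, use initiality of $\CS$ to obtain a section, and descend the resulting identification back to $\CS$ via the external split weak equivalence $1^\ast_\square(i)$. You also correctly identify that univalence is the conceptual crux and that completeness of $\overline{\CS}$ is what makes it work. However there are two concrete gaps in the technical interpretation of the type formers.

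First, your definition
\[ \BoolTy^\bullet(b) \triangleq \overline{\CS}.\Tm(1, \Id(b, \true)) + \overline{\CS}.\Tm(1, \Id(b, \false)) \]
does not yield a strict displayed eliminator. The constructor $\true^\bullet$ would be $\inl(\refl)$, and $\elimBool^\bullet$ acting on $\inl(p)$ would have to transport the displayed method for $\true$ along (the path corresponding under $\idToPath$ to) $p$. When $p=\refl$ this transport is only homotopic to the identity, not strictly equal to it, so the computation rule $\elimBool^\bullet(C^\bullet, t^\bullet, f^\bullet, \true^\bullet) = t^\bullet$ fails as a strict equation --- yet strict computation rules are required for a displayed model. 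The paper avoids this by taking $\BoolTy^\bullet$ to be the fibrant indexed inductive family generated by $\true^\bullet$ and $\false^\bullet$, whose eliminator computes strictly by construction, and only afterwards invokes its universal property to extract the coproduct of identity terms.

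Second, dismissing the interpretation of $\Id^\bullet$ as ``the usual unary logical predicate recipes lifted to $\SSet^\fib$'' glosses over the central technical move. The obvious direct interpretation of $\Id^\bullet(A^\bullet,x^\bullet,y^\bullet,p)$ via cubical paths or transports again fails the strict $\Jbeta$-rule. The paper instead defines $\Id^\bullet$ as the inductive family freely generated by $\refl^\bullet$, so that $\J^\bullet$ computes strictly, and then proves Lemma~\ref{lem:id_hit_equiv_path} ($\Id^\bullet(A^\bullet,x^\bullet,y^\bullet,\refl) \simeq (x^\bullet \sim y^\bullet)$) using completeness of $\overline{\CS}$. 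This lemma is exactly what drives Lemmas~\ref{lem:contr_simpl} and~\ref{lem:equiv_simpl} and hence the displayed univalence argument you flag as the main obstacle --- so it cannot be omitted as ``routine.'' (A minor aside: you describe the components of $\CS$ as ``trivially fibrant,'' which would mean contractible; you want ``fibrant'' --- they are discrete and hence fibrant, not contractible.)
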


We work internally to $\CcSet$.

Let $\CS$ be the initial model of HoTT.
Because $\CS$ is the initial algebra of an external generalized algebraic theory, it coincides with the external syntax of HoTT,
\ie the external model $1^\ast_\square(\CS)$ is also initial among external models of HoTT.

Write $\overline{\CS}$ for the Rezk completion of $\CS$, which exists by~\autoref{thm:hott_strict_rezk}, as noted in~\autoref{rem:hott_strict_rezk_axioms}.

\begin{lem}\label{lem:complete_model_contr_internal_iff_external}
  Let $A : \overline{\CS}.\Ty_n(\Gamma)$ be a type.
  If $A$ is contractible in $\overline{\CS}$, then $\overline{\CS}.\Tm(\Gamma,A)$ is contractible.
\end{lem}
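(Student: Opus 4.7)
The plan is to read off the data of internal contractibility and feed it into the completeness condition of $\overline{\CS}$. Since $\overline{\CS}$ has fibrant components (\autoref{prop:strict_rezk_hott_fibrant}), the set $\overline{\CS}.\Tm(\Gamma,A)$ is already fibrant, so I only need to produce a center and a path to every other term.

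First I unfold the hypothesis: ``$A$ is contractible in $\overline{\CS}$'' gives a global term
\[ w \in \overline{\CS}.\Tm(\Gamma,\isContr(A)), \]
whose projections yield a term $c \triangleq \fst(w) \in \overline{\CS}.\Tm(\Gamma,A)$ (the internal center) together with a proof $\snd(w)$ producing, for each $y \in \overline{\CS}.\Tm(\Gamma,A)$, a witness $p_y \in \overline{\CS}.\Tm(\Gamma, \Id_A(c,y))$ obtained by applying $\snd(w)$ to $y$. I propose $c$ as the center of contraction for $\overline{\CS}.\Tm(\Gamma,A)$.

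Next I invoke the completeness of $\overline{\CS}$ with the term $c$: the fibrant set
\[ T \triangleq (z : \overline{\CS}.\Tm(\Gamma,A)) \times \overline{\CS}.\Tm(\Gamma,\Id_A(c,z)) \]
is contractible. Given any $y \in \overline{\CS}.\Tm(\Gamma,A)$, both $(c,\refl_c)$ and $(y,p_y)$ are elements of $T$, so contractibility of $T$ provides a path between them in $T$. Taking the first projection of that path gives a path $c \sim y$ in $\overline{\CS}.\Tm(\Gamma,A)$, which combined with the center $c$ witnesses contractibility of $\overline{\CS}.\Tm(\Gamma,A)$ in the ambient cubical sense.

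The argument is essentially a repackaging of the two hypotheses, and I do not expect a real obstacle: the only subtlety is keeping straight the distinction between internal identity-type witnesses (which contractibility of $A$ in $\overline{\CS}$ supplies) and external cubical paths in the fibrant set $\overline{\CS}.\Tm(\Gamma,A)$ (which is what the conclusion demands), and the completeness condition of $\overline{\CS}$ is designed precisely to bridge these two notions.
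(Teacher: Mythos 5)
Your proof is correct and follows essentially the same route as the paper's: unfold the internal contractibility witness into a center and identifications, then use completeness to convert those identifications into cubical paths. The only cosmetic difference is that the paper extracts a single identification in the extended context $\Gamma.(x:A).(y:A)$ and substitutes, and phrases completeness as the derived equivalence $\overline{\CS}.\Tm(\Gamma,\Id_A(x,y)) \simeq (x \sim y)$, whereas you apply the internal contraction pointwise and read the path off from the contractibility of the total space $(z : \overline{\CS}.\Tm(\Gamma,A)) \times \overline{\CS}.\Tm(\Gamma,\Id_A(c,z))$ directly.
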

\begin{proof}
  Since $\overline{\CS}$ is complete,
  we have equivalences $\overline{\CS}.\Tm(\Gamma,\Id_A(x,y)) \simeq (x \sim y)$.
  
  The type $A$ is contractible in $\overline{\CS}$, which means we have a center of contraction $a_0 : \overline{\CS}.\Tm(\Gamma,A)$ and an identification $p : \overline{\CS}.\Tm(\Gamma.(x:A).(y:A), \Id_A(x,y))$.
  Since $\overline{\CS}$ is complete,
  we have a path $(x \sim y)$ in $\overline{\CS}.\Tm(\Gamma.(x:A).(y:A), \Id_A(x,y))$.
  Now given $x',y' : \overline{\CS}.\Tm(\Gamma,A)$, consider the substitution $\angles{x',y'} : \overline{\CS}(\Gamma, \Gamma.(x:A).(y:A))$.
  We have $x[\angles{x',y'}] \sim y[\angles{x',y'}]$, \ie $x' \sim y'$.

  This shows that $\overline{\CS}.\Tm(\Gamma,A)$ has a center of contraction and is a homotopy proposition.
  It is therefore contractible.
\end{proof}

\begin{rem}
  More generally, $A$ is contractible in $\overline{\CS}$ if and only if $\overline{\CS}.\Tm(\Delta,A[f])$ is contractible for any $f : \Delta \to \Gamma$ (looking at $\Gamma \to \Gamma$ and $\Gamma.A.A \to \Gamma$ suffices).
\end{rem}

The model $\overline{\CS}$ being complete also implies the existence of an equivalence
\[ \idToPath : \overline{\CS}.\Tm(\Gamma,\Id_A(x,y)) \to (x \sim y) \]
which sends $\refl$ to a homotopically constant path.

\subsection{The canonicity model}

In this subsection we define a (large) displayed higher-order model $\MS^\bullet$ over $\overline{\CS}$, which will be used to prove canonicity.
It is very similar to the displayed higher-order model used to prove canonicity for MLTT, except that we use logical predicates valued into fibrant sets.

A displayed type over a closed type $A : \overline{\CS}.\Ty_n(1)$ is a unary logical predicate valued in the universe of $n$-small fibrant sets:
\[ \Ty^\bullet_n(A) \triangleq \overline{\CS}.\Tm(1,A) \to \SSet^{\mathsf{fib}}_n. \]
A displayed term of type $A^\bullet$ over a term $a : \overline{\CS}.\Tm(1,A)$ is an element of the logical predicate $A^\bullet$ at $a$:
\[ \Tm^\bullet(A^\bullet,a) \triangleq A^\bullet(a). \]

\subsubsection{Identity types}

The logical predicate for identity types is defined as the HIT
\begin{alignat*}{1}
  & \Id^\bullet : \forall A\ x\ y\ (A^\bullet : \Ty^\bullet(A))\ (x^\bullet : A^\bullet(x))\ (y^\bullet : A^\bullet(y)) \to \overline{\CS}.\Tm(1,\Id(A,x,y)) \to \SSet^{\mathsf{fib}}_n
\end{alignat*}
generated by a single constructor
\begin{alignat*}{1}
  & \refl^\bullet : \forall A\ x\ (A^\bullet : \Ty^\bullet(A))\ (x^\bullet : A^\bullet(x)) \to \Id^\bullet(A^\bullet,x^\bullet,x^\bullet,\overline{\CS}.\refl(A,x)).
\end{alignat*}

The displayed eliminator for the identity types is interpreted using the elimination principle of $\Id^\bullet$.

\begin{lem}\label{lem:id_hit_equiv_path}
  There is an equivalence $\Id^\bullet(A^\bullet,x^\bullet,y^\bullet,\refl) \simeq (x^\bullet \sim y^\bullet)$.
\end{lem}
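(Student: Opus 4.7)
The plan is to construct inverse maps by exploiting J-like elimination principles on both sides: the HIT $\Id^\bullet$ satisfies a path-based J by its universal property as the free fibrant family with one constructor over the diagonal, while the cubical path type $(x^\bullet \sim y^\bullet)$ satisfies based path induction because $(y^\bullet : A^\bullet(x)) \times (x^\bullet \sim y^\bullet)$ is a contractible singleton. Both sides carry a canonical ``reflexive'' element ($\refl^\bullet$ and the constant path, respectively), and the two constructions will match these up.

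For the forward map $\phi$, I would apply the eliminator of $\Id^\bullet$ with a motive depending on all of $y, y^\bullet, p$ (not just on $y^\bullet$ with $p$ fixed, which would not be a legal motive). A natural choice is the fibrant set of dependent paths from $x^\bullet$ to $y^\bullet$ lying over the cubical path $\idToPath(p)$ in the family $A^\bullet$; here I would use that $A^\bullet \colon \overline{\CS}.\Tm(1,A) \to \SSet^\fib_n$ lands in a univalent universe of fibrant sets, so it transports along paths up to equivalence. The case $\refl^\bullet$ is sent to the constant dependent path at $x^\bullet$, which lies over $\idToPath(\refl)$. Specializing to $p = \refl$ and using that $\idToPath(\refl)$ is homotopically constant yields $\phi \colon \Id^\bullet(A^\bullet,x^\bullet,y^\bullet,\refl) \to (x^\bullet \sim y^\bullet)$. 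For the backward map $\psi$, I would apply based path induction on the fibrant family $Q(y^\bullet, h) \triangleq \Id^\bullet(A^\bullet, x^\bullet, y^\bullet, \refl)$ in $y^\bullet : A^\bullet(x)$, starting from $\psi(x^\bullet, \text{constant path}) \triangleq \refl^\bullet$; this is well-defined because $Q$ is valued in fibrant sets by construction of the HIT.

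The two inverse laws then reduce to checking the reflexivity case: $\psi \circ \phi = \id$ on $\Id^\bullet$ follows by J-elimination of $\Id^\bullet$, checking that at $\refl^\bullet$ the composite agrees with $\refl^\bullet$ by the defining equations of $\phi$ and $\psi$; symmetrically $\phi \circ \psi = \id$ on paths follows by based path induction, checking the composite at the constant path. The main obstacle I anticipate is the bookkeeping around $\idToPath(\refl)$ being only homotopically constant rather than literally constant, so the translation between ``dependent paths over $\idToPath(\refl)$'' and ``ordinary paths in $A^\bullet(x)$'' requires invoking the action of the univalent universe $\SSet^\fib_n$ on the homotopy connecting $\idToPath(\refl)$ to the constant path, and tracking that coherence through the two composites so that they really land as identities (or at least as equivalences, which is all that is needed for the stated equivalence).
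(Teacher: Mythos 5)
Your overall strategy (explicit mutually inverse maps built from the eliminator of $\Id^\bullet$ on one side and based path induction on the contractible singleton $(y^\bullet : A^\bullet(x)) \times (x^\bullet \sim y^\bullet)$ on the other) is a legitimate alternative to the paper's argument, and your forward map $\phi$ is set up correctly, including the correct observation that the motive must vary over all of $(y,y^\bullet,p)$ and the flagged coherence issue around $\idToPath(\refl)$ being only homotopically constant. But there is a genuine gap in the round-trip verification of $\psi \circ \phi$. You propose to prove $\psi(\phi(q)) \sim q$ ``by J-elimination of $\Id^\bullet$'', yet the family $(y^\bullet, q) \mapsto (\psi(\phi(q)) \sim q)$ lives only over the slice $p = \refl$, which is exactly the kind of motive you yourself ruled out as illegal when constructing $\phi$. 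To make the eliminator applicable you must generalize both maps over all $(y,p)$, and generalizing $\psi$ — i.e., producing an element of $\Id^\bullet(A^\bullet,x^\bullet,y^\bullet,p)$ from a dependent path over $\idToPath(p)$ for arbitrary $(y,p)$ — requires first contracting $(y : \overline{\CS}.\Tm(1,A)) \times (p : \overline{\CS}.\Tm(1,\Id(A,x,y)))$ to $(x,\refl)$. That contractibility is precisely the completeness of $\overline{\CS}$, which your proposal never invokes beyond the definition of $\idToPath$; since the lemma is false without completeness (for a non-complete model the fiber of $\Id^\bullet$ over $\refl$ picks up contributions from the whole identity-type singleton), any correct proof must use it at exactly this point.

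For comparison, the paper avoids the map-building entirely: it reduces the statement to the contractibility of $(y^\bullet : A^\bullet(x)) \times \Id^\bullet(A^\bullet,x^\bullet,y^\bullet,\refl)$, observes that the universal property of the HIT makes the full total space $(y,p,y^\bullet,q)$ contractible, and then quotes completeness of $\overline{\CS}$ to contract the $(y,p)$ part, leaving the desired fiber contractible. If you patch your argument by inserting that same contraction step before defining the generalized $\psi$, your encode--decode route goes through, but at that point you have essentially reproved the paper's contractibility statement and the explicit inverse maps become redundant.
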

\begin{proof}
  It suffices to prove that $(y^\bullet : A^\bullet(x)) \times \Id^\bullet(A^\bullet,x^\bullet,y^\bullet,\refl)$ is contractible.
  
  The universal property of $\Id^\bullet$ implies that
  \[ (y : \overline{\CS}.\Tm(\Gamma,A)) \times (p : \overline{\CS}.\Tm(1,\Id(A,x,y)) \times (y^\bullet : A^\bullet(x)) \times \Id^\bullet(A^\bullet,x^\bullet,y^\bullet,p) \]
  is contractible, so the result follows from the contractibility of $(y : \overline{\CS}.\Tm(\Gamma,A)) \times (p : \overline{\CS}.\Tm(1,\Id(A,x,y))$, \ie from $\overline{\CS}$ being complete.
\end{proof}

\subsubsection{Pi-types}

Take a displayed type $A^\bullet : \Ty^\bullet_n(A)$ and a family
\[ B^\bullet : \forall (a : \overline{\CS}.\Tm(1,A))\ (a^\bullet : A^\bullet(a)) \to \Ty^\bullet_n(B[a]). \]

The logical predicate over the $\Pi$-type $\Pi(A,B)$ is:
\begin{alignat*}{1}
  & \Pi^\bullet(A^\bullet,B^\bullet) \triangleq \lambda f \mapsto (\forall (a : \overline{\CS}.\Tm(1,A))\ (a^\bullet : A^\bullet(a)) \to B^\bullet(a^\bullet, \app(f,a))).
\end{alignat*}

\subsubsection{Sigma-types}

Take a displayed type $A^\bullet : \Ty^\bullet_n(A)$ and a family
\[ B^\bullet : \forall (a : \overline{\CS}.\Tm(1,A))\ (a^\bullet : A^\bullet(a)) \to \Ty^\bullet_n(B[a]). \]

The logical predicate over the $\Sigma$-type $\Sigma(A,B)$ is:
\begin{alignat*}{1}
  & \Sigma^\bullet(A^\bullet,B^\bullet) \triangleq \lambda p \mapsto (a^\bullet : A^\bullet(\fst(p))) \times (b^\bullet : B^\bullet(a^\bullet,\snd(p))).
\end{alignat*}

\subsubsection{Boolean-types}

The logical predicate over the Boolean-type $\BoolTy$ is the fibrant inductive family generated by:
\begin{alignat*}{1}
  & \BoolTy^\bullet : \overline{\CS}.\Tm(1,\BoolTy) \to \SSet^\fib_n, \\
  & \true^\bullet : \BoolTy^\bullet(\true), \\
  & \false^\bullet : \BoolTy^\bullet(\false).
\end{alignat*}

\subsubsection{$W$-types}

Take a displayed type $A^\bullet : \Ty^\bullet_n(A)$ and a family
\[ B^\bullet : \forall (a : \overline{\CS}.\Tm(1,A))\ (a^\bullet : A^\bullet(a)) \to \Ty^\bullet_n(B[a]). \]

The logical predicate over the type $W(A,B)$ is the fibrant inductive family generated by:
\begin{alignat*}{1}
  & W^\bullet : \overline{\CS}.\Tm(1,W(A,B)) \to \SSet^\fib_n, \\
  & \mathsf{sup}^\bullet : \forall a\ f \to (a^\bullet : A^\bullet(a)) \to (\forall b \to B^\bullet(a^\bullet,b) \to W^\bullet(f(b))) \to W^\bullet(\mathsf{sup}(a,f)).
\end{alignat*}

\subsubsection{Universes}

Fix a universe level $n$.

The logical predicate for the $n$-th universe is
\[ \UU^\bullet_n \triangleq \lambda A \mapsto (\overline{\MS}.\Tm(1,\El(A)) \to \SSet^{\mathsf{fib}}_n). \]
In other words, elements of $\UU^\bullet(A)$ are $n$-small logical predicates over closed terms of type $\El(A)$.

We have isomorphisms $\El^\bullet : \Tm^\bullet(\UU^\bullet_n, A) \cong \Ty^\bullet(A)$.

\subsubsection{Univalence}

Last but not least, we have to define the displayed univalence structure.

We recall the definitions of the displayed contractibility witnesses and equivalences.
\begin{alignat*}{1}
  & \isContr^\bullet(A^\bullet,c) = (x^\bullet : A^\bullet(\fst(c))) \times (\forall y\ (y^\bullet : A^\bullet(y)) \to \Id^\bullet(A^\bullet,x^\bullet,y^\bullet, \app(\snd(c), y))), \\
  & \isEquiv^\bullet(f^\bullet,e) \\
  & \quad = (\forall b\ b^\bullet \to \isContr^\bullet(\lambda (a,p) \mapsto (a^\bullet : A^\bullet(a)) \times (p^\bullet : \Id^\bullet(B^\bullet,f^\bullet(a^\bullet),b^\bullet,p)), \app(e,b))), \\
  & \Equiv^\bullet(A^\bullet,B^\bullet,f) = (f^\bullet : \forall a \to A^\bullet(a) \to B^\bullet(\app(\fst(f),a))) \times \isEquiv^\bullet(f^\bullet,\snd(f)).
\end{alignat*}
 
We start by relating these notions to cubical notions of contractibility and equivalences.
\begin{lem}\label{lem:contr_simpl}
  Let $A^\bullet : \Ty^\bullet_n(A)$ be a displayed type and $c : \overline{\CS}.\Tm(1,\isContr(A))$ be a witness of the contractibility of $A$.
 
  Then there is an equivalence
  \[ \isContr^\bullet(A^\bullet, c) \simeq (\forall (a : \overline{\CS}.\Tm(1,A)) \to \isContr(A^\bullet(a))). \]
\end{lem}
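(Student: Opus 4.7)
My plan is to establish the equivalence by analyzing what the displayed contractibility unfolds to when the base type $A$ is already known to be contractible in $\overline{\CS}$. The key technical fact, strengthening the argument in the proof of Lemma \ref{lem:id_hit_equiv_path}, is that for any $x^\bullet : A^\bullet(\fst(c))$, any $y : \overline{\CS}.\Tm(1,A)$, and any $p : \overline{\CS}.\Tm(1, \Id_A(\fst(c), y))$, the fiber
\[ T_{x^\bullet}(y, p) \triangleq (y^\bullet : A^\bullet(y)) \times \Id^\bullet(A^\bullet, x^\bullet, y^\bullet, p) \]
is contractible. This follows exactly as there: the universal property of the HIT $\Id^\bullet$ makes the total space over $(y, p, y^\bullet)$ contractible, while $(y, p)$ itself is contractible by completeness of $\overline{\CS}$ together with Lemma \ref{lem:complete_model_contr_internal_iff_external} applied to $A$ (contractible by $c$).

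For the forward direction $\isContr^\bullet(A^\bullet, c) \to (\forall a \to \isContr(A^\bullet(a)))$, given $(x^\bullet, h^\bullet)$ and any $a$, the assignment $y^\bullet \mapsto (y^\bullet, h^\bullet(a, y^\bullet))$ sections the first projection $T_{x^\bullet}(a, \app(\snd(c), a)) \to A^\bullet(a)$, exhibiting $A^\bullet(a)$ as a retract of a contractible set and hence contractible. For the converse, given fiberwise contractibility $f$, set $x^\bullet \triangleq \fst(f(\fst(c)))$; for each $y, y^\bullet$, pick any pair $(z^\bullet, q)$ from the contractible fiber $T_{x^\bullet}(y, \app(\snd(c), y))$, use that $A^\bullet(y)$ is a homotopy proposition (being contractible by $f(y)$) to produce a path $z^\bullet \sim y^\bullet$, and transport $q$ along this path through the fibrant family $\lambda z^\bullet.\ \Id^\bullet(A^\bullet, x^\bullet, z^\bullet, \app(\snd(c), y))$ to obtain the required $h^\bullet(y, y^\bullet)$.

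Finally, both sides are homotopy propositions: the right-hand side in the standard way (being a dependent product of contractibility propositions), and the left-hand side because, under the assumption of its inhabitation (which by the forward direction implies every $A^\bullet(a)$ is contractible), the contractibility of each $T_{x^\bullet}(y, p)$ forces the data $(x^\bullet, h^\bullet)$ to be propositionally unique. The logical equivalence therefore upgrades to an equivalence. The main obstacle is the transport step in the backward direction, which relies on $\Id^\bullet$ being fibrant in its middle term-level argument; this is precisely why $\Id^\bullet$ is formulated as a fibrant HIT, rather than as a naive inductive family.
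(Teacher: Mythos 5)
Your proof is correct, but it is organized differently from the paper's. The paper proves the lemma as a single chain of equivalences: unfold $\isContr^\bullet$, use the contractibility of $\overline{\CS}.\Tm(1,A)$ (from Lemma~\ref{lem:complete_model_contr_internal_iff_external}) to merge the basepoint $\fst(c)$ and the quantifier over $y$ into one quantifier over $a$, use the contractibility of $\overline{\CS}.\Tm(1,\Id_A(a,a))$ to replace $\app(\snd(c),a)$ by $\refl$, and then apply Lemma~\ref{lem:id_hit_equiv_path} to convert $\Id^\bullet(A^\bullet,x^\bullet,y^\bullet,\refl)$ into cubical paths $x^\bullet\sim y^\bullet$. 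You instead first generalize Lemma~\ref{lem:id_hit_equiv_path} to the contractibility of every fiber $T_{x^\bullet}(y,p)$ (which is indeed exactly what that proof establishes, since it derives fiberwise contractibility from contractibility of the total space and of the base $(y,p)$), then build the two maps by hand --- a retract argument forward, and center-picking plus a transport in the fibrant family $\Id^\bullet$ backward --- and finally upgrade the logical equivalence to an equivalence by showing both sides are homotopy propositions. The trade-off is roughly even: the paper's route avoids your extra propositionality step by working with equivalences throughout, but each link in its chain silently uses the same reparametrization-over-a-contractible-base and fibrancy facts that you make explicit; your route localizes all the HIT reasoning in the single statement about $T_{x^\bullet}(y,p)$ and makes the use of fibrancy of $\Id^\bullet$ (for the transport step) visible, which you rightly flag as the point where the construction of $\Id^\bullet$ as a fibrant HIT is essential.
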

\begin{proof}
  Since $A$ is contractible in $\overline{\CS}$, its set of terms $\overline{\CS}.\Tm(1,A)$ is contractible by~\autoref{lem:complete_model_contr_internal_iff_external}.
  For any $x,y : \overline{\CS}.\Tm(1,A)$, the set $\overline{\CS}.\Tm(1,\Id_A(x,y))$ is also contractible by~\autoref{lem:complete_model_contr_internal_iff_external}.
  
  We have the following chain of equivalences:
  \begin{alignat*}{1}
    & \isContr^\bullet(A^\bullet, c) 
    \\
    & \quad \simeq (x^\bullet : A^\bullet(\fst(c))) \times (\forall y\ (y^\bullet : A^\bullet(y)) \to \Id^\bullet(A^\bullet,x^\bullet,y^\bullet, \app(\snd(c), y))) 
      \tag*{(Definition)}
    \\
    & \quad \simeq \forall (a : \overline{\CS}.\Tm(1,A)) \to (x^\bullet : A^\bullet(a)) \times (\forall (y^\bullet : A^\bullet(a)) \to \Id^\bullet(A^\bullet,x^\bullet,y^\bullet, \app(\snd(c), a))) 
      \tag*{(Contractibility of $\overline{\CS}.\Tm(1,A)$)}
    \\
    & \quad \simeq \forall (a : \overline{\CS}.\Tm(1,A)) \to (x^\bullet : A^\bullet(a)) \times (\forall (y^\bullet : A^\bullet(a)) \to \Id^\bullet(A^\bullet,x^\bullet,y^\bullet, \refl)) 
      \tag*{(Contractibility of $\overline{\CS}.\Tm(1,\Id_A(a,a))$)}
    \\
    & \quad \simeq \forall (a : \overline{\CS}.\Tm(1,A)) \to (x^\bullet : A^\bullet(a)) \times (\forall (y^\bullet : A^\bullet(a)) \to (x^\bullet \sim y^\bullet))
      \tag*{(By~\autoref{lem:id_hit_equiv_path})}
    \\
    & \quad \simeq \forall (a : \overline{\CS}.\Tm(1,A)) \to \isContr(A^\bullet(a)).
      \tag*{(Definition of $\isContr$) \qedhere}
  \end{alignat*}
\end{proof}

\begin{lem}\label{lem:equiv_simpl}
  Let $A^\bullet : \Ty^\bullet_n(A)$ and $B^\bullet : \Ty^\bullet_n(B)$ be two displayed types, along with a displayed map
  \[ f^\bullet : \forall a\ (a^\bullet : A^\bullet(a)) \to B^\bullet(\app(f,a)) \]
  and an element $e : \overline{\CS}.\Tm(1,\isEquiv(f))$.
  
  Then there is an equivalence
  \[ \isEquiv^\bullet(A^\bullet,B^\bullet, e) \simeq (\forall (a : \overline{\CS}.\Tm(1,A)) \to \isEquiv(f^\bullet_a)). \]
\end{lem}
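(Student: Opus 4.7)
The plan is to mirror the chain-of-equivalences strategy of the proof of Lemma \ref{lem:contr_simpl}, applied this time to the nested $\isContr^\bullet$ appearing inside the definition of $\isEquiv^\bullet$. I would begin by unfolding the definition to obtain $\isEquiv^\bullet(f^\bullet,e) = \forall b\ (b^\bullet : B^\bullet(b)) \to \isContr^\bullet(F^\bullet_{b,b^\bullet}, \app(e,b))$, where $F^\bullet_{b,b^\bullet}$ denotes the displayed fiber type, namely $F^\bullet_{b,b^\bullet}(a,p) = (a^\bullet : A^\bullet(a)) \times \Id^\bullet(B^\bullet, f^\bullet(a^\bullet), b^\bullet, p)$.

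Since $e$ witnesses that $f$ is an equivalence in $\overline{\CS}$, the type $(a:A) \times \Id(\app(f,a),b)$ is contractible for each $b$. Lemma \ref{lem:contr_simpl} then applies to each inner $\isContr^\bullet$ and rewrites the expression as $\forall b\ b^\bullet\ (p : \overline{\CS}.\Tm(1,(a:A)\times\Id(\app(f,a),b))) \to \isContr(F^\bullet_{b,b^\bullet}(p))$.

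Next I would reparametrize the outer quantifications over $b$ and $p$. The set $(b : \overline{\CS}.\Tm(1,B)) \times \overline{\CS}.\Tm(1,(a:A)\times\Id(\app(f,a),b))$ is naturally equivalent to $\overline{\CS}.\Tm(1,A)$: reorganizing it as $(a : \overline{\CS}.\Tm(1,A)) \times (b : \overline{\CS}.\Tm(1,B)) \times \overline{\CS}.\Tm(1,\Id(\app(f,a),b))$ and contracting the based-identification singleton $(b,p)$ leaves just $a$, with $b$ identified with $\app(f,a)$ and $p$ with $\refl$. This singleton contraction uses Lemma \ref{lem:complete_model_contr_internal_iff_external} to convert contractibility of the relevant type in $\overline{\CS}$ into actual contractibility of its set of closing substitutions. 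This step is the main subtlety of the proof, since one must carefully transport the contractibility data from the internal world of $\overline{\CS}$ to the cubical set of closed terms and verify that the rearrangement behaves coherently with the subsequent indexing of $b^\bullet$ by $\app(f,a)$.

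After the change of variables, the expression reads $\forall (a : \overline{\CS}.\Tm(1,A))\ (b^\bullet : B^\bullet(\app(f,a))) \to \isContr((a^\bullet : A^\bullet(a)) \times \Id^\bullet(B^\bullet, f^\bullet(a^\bullet), b^\bullet, \refl))$. A final application of Lemma \ref{lem:id_hit_equiv_path} replaces the displayed identity fiber with the cubical path type $(f^\bullet_a(a^\bullet) \sim b^\bullet)$, at which point the expression is by definition $\forall a \to \isEquiv(f^\bullet_a)$, as required.
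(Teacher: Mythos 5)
Your proposal is correct and follows essentially the same route as the paper's proof: unfold $\isEquiv^\bullet$, apply Lemma~\ref{lem:contr_simpl} to the inner $\isContr^\bullet$ over the (contractible) fiber type, contract the singleton $(b,p)$ to $(\app(f,a),\refl)$ using completeness of $\overline{\CS}$, and finish with Lemma~\ref{lem:id_hit_equiv_path}. The only cosmetic difference is that you quantify over a single closed term of the $\Sigma$-type fiber where the paper quantifies over the pair $(a,p)$ separately.
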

\begin{proof}
  We have the following chain of equivalences:
  \begin{alignat*}{1}
    & \isEquiv^\bullet(A^\bullet,B^\bullet, e)
    \\
    & \quad \simeq \forall b\ b^\bullet \to \isContr^\bullet(\lambda (a,p) \mapsto (a^\bullet : A^\bullet(a)) \times (p^\bullet : \Id^\bullet(B^\bullet,f^\bullet(a^\bullet),b^\bullet,p)), \app(e,b))
      \tag*{(Definition)}
    \\
    & \quad \simeq \forall b\ b^\bullet \to \forall a\ p \to \isContr((a^\bullet : A^\bullet(a)) \times (p^\bullet : \Id^\bullet(B^\bullet,f^\bullet(a^\bullet),b^\bullet,p)))
      \tag*{(By~\autoref{lem:contr_simpl})}
    \\
    & \quad \simeq \forall a\ b^\bullet \to \isContr((a^\bullet : A^\bullet(a)) \times (p^\bullet : \Id^\bullet(B^\bullet,f^\bullet(a^\bullet),b^\bullet,\refl)))
      \tag*{(Contraction of $(b,p)$ to $(a,\refl)$)}
    \\
    & \quad \simeq \forall a\ b^\bullet \to \isContr((a^\bullet : A^\bullet(a)) \times (p^\bullet : f^\bullet(a^\bullet) \sim b^\bullet))
      \tag*{(By~\autoref{lem:id_hit_equiv_path})}
    \\
    & \quad \simeq \forall (a : \overline{\CS}.\Tm(1,A)) \to \isEquiv(f^\bullet_a).
      \tag*{(Definition of $\isEquiv$) \qedhere}
  \end{alignat*}
\end{proof}

We can now interpret univalence in $\MS^\bullet$.
Take a displayed type $A^\bullet : \Ty^\bullet_n(A)$.
We have to construct 
\[ \ua^\bullet(A^\bullet) : \isContr^\bullet(\lambda (B,E) \mapsto (B^\bullet : \Tm(1,B) \to \SSet^\fib_n) \times \Equiv^\bullet(A^\bullet,B^\bullet,E),\ua(A)). \]

By~\autoref{lem:contr_simpl}, it suffices to prove, for every $B : \Ty_n(1)$ and $E : \Equiv(A,B)$, the contractibility of
\[ (B^\bullet : \overline{\CS}.\Tm(1,B) \to \SSet^\fib_n) \times \Equiv^\bullet(A^\bullet,B^\bullet,E). \]

By~\autoref{lem:complete_model_contr_internal_iff_external} and univalence in $\overline{\CS}$, the set $(B : \overline{\CS}.\Ty_n) \times \overline{\CS}.\Tm(1,\Equiv(A,B))$ is contractible.
We can thus assume without loss of generality that $(B,E) = (A,\id_A)$.
By~\autoref{lem:equiv_simpl}, it then suffices to prove the contractibility of
\[ (B^\bullet : \overline{\CS}.\Tm(1,A) \to \SSet^\fib_n) \times (f^\bullet : \forall a \to A^\bullet(a) \to B^\bullet(a)) \times (\forall a \to \isEquiv(f^\bullet_a)). \]

We can move the quantification on $a : \overline{\CS}.\Tm(1,A)$ outside of the contractibility condition.
It then suffices to prove, for every $a$, the contractibility of
\[ (B^\bullet : \SSet^\fib_n) \times (f^\bullet : A^\bullet(a) \to B^\bullet(a)) \times \isEquiv(f^\bullet). \]
This is exactly univalence for the universe $\SSet^\fib_n$, which holds in cartesian cubical sets.

\subsection{Homotopy canonicity}

We have defined a displayed higher-order model $\MS^\bullet$ of HoTT over $\overline{\CS}$.
We can consider its displayed contextualization (sconing) $\CScone_{\MS^\bullet} \rightarrowtriangle \overline{\CS}$.
By the universal property of the model $\CS$, we obtain a section $\sem{-}$ of $\CScone_{\MS^\bullet}[i]$.

\[ \begin{tikzcd}
    &
    \CScone_{\MS^\bullet}
    \ar[d, -{Triangle[open]}]
    \\
    \CS
    \ar[r, "i"']
    \ar[ru, "{\sem{-}}"]
    &
    \overline{\CS}.
  \end{tikzcd} \]

We can now prove homotopy canonicity:
\begin{proof}[Proof of~\autoref{thm:homotopy_canonicity}]
  We have to prove that the model $1^\ast_\square(\CS)$, which is initial, satisfies homotopy canonicity.
  Let $b$ be a global element of $\CS.\Tm(1,\BoolTy)$.
  
  Applying the section $\sem{-}$ to $b$, we obtain a global element
  \[ \sem{b} : \BoolTy^\bullet(i(b)). \]
  
  By the universal property of $\BoolTy^\bullet$, we obtain a global element of
  \[ \overline{\CS}.\Tm(1,\Id(i(b), \true)) + \overline{\CS}.\Tm(1,\Id(i(b), \false)). \]

  Externally, this can be seen as an element of
  \[ 1^\ast_\square(\overline{\CS}).\Tm(1,\Id(i(b), \true)) + 1^\ast_\square(\overline{\CS}).\Tm(1,\Id(i(b), \false)). \]

  Since $1^\ast_\square(i) : 1^\ast_\square(\CS) \to 1^\ast_\square(\overline{\CS})$ is a split weak equivalence, we have a global element of
  \[ 1^\ast_\square(\CS).\Tm(1,\Id(i(b), \true)) + 1^\ast_\square(\CS).\Tm(1,\Id(i(b), \false)), \]
  as needed.
\end{proof}


\section{Future work}

In this paper we have only performed the construction of the strict Rezk completion that was needed for the proof of homotopy canonicity.
To enable their use in other applications, strict Rezk completions should be studied more abstractly in future work.

We have constructed strict Rezk completions for the generalized algebraic theories of categories and of democratic models of HoTT.
The two proofs already share a large part of their structure; this should be abstracted into general constructions for any generalized algebraic theories with a homotopy theory satisfying some conditions.

We have shown that strict Rezk completions exist in cartesian cubical sets, and that the inclusions become split weak equivalences after externalization.
It would be interesting to generalize the constructions to other presheaf models such as De Morgan cubical sets or (classically) simplicial sets.
Since we use the axiomatization of~\textcite{UnifyingCubicalModels}, our constructions are almost valid in De Morgan cubical sets, except for the fact that we use diagonal cofibrations in the proof of~\autoref{prop:cat_haswcoe_glue}.

The externalization functor $1^\ast_\square : \CcSet \to \SSet$ should also be generalized to other inverse image functors $F^\ast : \CPsh(\CC) \to \CPsh(\CA)$ such $F^\ast(\Cof) \cong \{\true,\false\}$, perhaps satisfying some other conditions.
For applications, functors of the form $\angles{\id,1_\square} : \CA \to (\CA \times \square)$ seem important.

As noted in~\autoref{rem:cof_fibration}, the strict Rezk-completion can be seen as a form of fibrant replacement, parametrized by a notion of cofibration.
Generally, any (algebraic) weak factorization system can be parametrized by a notion of cofibration.
Christian Sattler has suggested parametrizing whole homotopy theories ((semi) model structures) by a notion of cofibration.

The extension structures of a strict Rezk completion $\overline{\CM}$ of a model $\CM$ are not strictly stable under substitution: we do not have $\ext_\Tm(x[f]) = \ext_\Tm(x)[f]$ as a strict equality when $x \in \CM.\Tm(\Gamma,A)$ and $f \in \CM(\Delta,\Gamma)$.
They are however weakly stable, since contractibility is a homotopy proposition.
It would be interesting to know whether strict Rezk completion with strictly stable extension structures can be constructed.
Having strictly stable extension operations would make them available
internally to $\CPsh(\overline{\CM})$.


\printbibliography

\end{document}